\documentclass[11pt]{article}
\usepackage{mathrsfs}
\usepackage{amssymb}
\usepackage{amsmath}
\usepackage{amsbsy}
\usepackage{epsfig}
\usepackage{enumerate}
\usepackage{bm}
\usepackage{bbm}
 \usepackage{color}

\newcommand{\red}[1]{\textcolor{red}{#1}}

\usepackage[colorlinks,linkcolor=blue]{hyperref}

\newcommand{\MR}[1]{\href{http://www.ams.org/mathscinet-getitem?mr=#1}{\textcolor{blue}{MR-#1}}}%

\def\proclaim#1{\par \smallskip\noindent {\bf #1}\bgroup\it\ }
\def\endproclaim{\egroup\par\smallskip}

\newtheorem{lemma}{Lemma}[section]
\newtheorem{theorem}{Theorem}[section]
\newtheorem{proposition}{Proposition}[section]

\newtheorem{corollary}{Corollary}[section]

\newtheorem{remark}{Remark}[section]

\newbox\TempBox \newbox\TempBoxA

\def\pr{\textsf{P}} 
\def\ep{\textsf{E}} 

\def\Var{\textsf{Var}} 

\def\underwiggle 1{
\ifmmode\setbox\TempBox=\hbox{$ 1$}\else\setbox\TempBox=\hbox{
1}\fi \setbox\TempBoxA=\hbox to \wd\TempBox{\hss\char'176\hss}
\rlap{\copy\TempBox}\smash{\lower9pt\hbox{\copy\TempBoxA}} }

\begin{document}

\title{\bf Chung-type laws of the iterated logarithm for  $m$-fold weighted integrated fractional processes\thanks{Supported by grants from National Key R\&D Program of China (Grant No. 2024YFA1013502),  NSF of China  (Grant No. U23A2064) and   the Summit Advancement Disciplines of Zhejiang Province (Zhejiang Gongshang University - Statistics).} }

\author{ Li-Xin   ZHANG\footnote{Address: School of Statistics and Data Sciences, Zhejiang Gongshang University, 310018, P. R. China.  Email:stazlx@mail.zjgsu.edu.cn}\\
\\
 Zhejiang Gongshang University, P.R. China}

\date{}
\maketitle
\begin{abstract} Let $\{B_H(t);t\ge 0\}$  be  a fractional Brownian motion of order $H\in (0,1)$, and $J_{m,\bm\alpha}(B_H)$ be the $m$-fold weighted integrals of $B_H$ defined as
$$ J_{m,\bm\alpha}(B_H)(t) =\int_0^ts_m^{-\alpha_m}\int_0^{s_m}\cdots s_2^{-\alpha_2}\int_0^{s_2}s_1^{-\alpha_1}B_H(s_1)d s_1\; ds_2\cdots d s_m, $$
where $\alpha_1+\cdots+\alpha_i<H+i$, $i=1,\ldots,m$, $\bm\alpha=\bm\alpha_m=(\alpha_1,\ldots,\alpha_m)$.  
 We  show that
\begin{align*}
\liminf_{T\to \infty}     \frac{(\log\log T)^{H+m}}{T^{H+m-\alpha}}\sup_{0\le  t\le T}\left|\frac{ J_{m,\bm\alpha}(B_H)(t)}{t^{\alpha-\alpha_1-\cdots-\alpha_m}}\right|  
   = a_H\left( \frac{\kappa_{H+m}}{1-\alpha/(H+m)}\right)^{H+m}\;\; a.s.
\end{align*}
for all   $\alpha<H+m$,  and 
\begin{align*}
   \liminf_{T\to \infty} &  \sqrt{\frac{\log\log\log T}{\log T}}  \sup_{1\le t\le T}\left|\int_1^t  \frac{J_{m-1, \bm\alpha_{m-1}}(B_H)(s)}{s^{H+m-\alpha_1-\cdots-\alpha_{m-1}}}ds\right|\\
   &= \frac{\pi}{2}\frac{\sqrt{\beta(2H,1-H)}}{\prod_{i=1}^{m-1}\big(H+i-\alpha_1-\cdots-\alpha_i\big)}\;\; a.s.,
\end{align*}
 where $a_H$ is an explicit constant with $a_{\frac{1}{2}}=1$,  $\kappa_{\lambda}$ is a  constant which depends only on $\lambda$, and $\beta(a,b)$ is the beta function. In particular, the exact value of a Chung-type law of the iterated logarithm established by Duker,  Li and  Linde (2000) is found, and as an application, the Chung-type law of the iterated logarithm for the randomized play-the-winner rule is established.  
 The small ball probabilities of \(J_{m, \bm\alpha}(B_H)\) are established to show the liminf behaviors.   Similar Chung-type laws of the iterated logarithm and small ball probabilities for a  Riemann–Liouville fractional process are also established. 

{\bf Keywords:} fractional Brownian motion, Chung-type law of the iterated logarithm,   small ball probability, Riemann–Liouville fractional process, urn model

{\bf AMS 2020 subject classifications:} Primary 60F15; Secondary 60F05

\bigskip
This paper is submitted to Science in China-Mathematics. Some details (marked  red) are added in the present version for understanding  in  the proofs. 
\end{abstract}

\section{Introduction and main results} 
\setcounter{equation}{0}
Let $\{B_H(t);t\ge 0\}$   denote the $H$-fractional Brownian motion with $B_H(0)=0$ and $0<H<1$. 
Then $\{B_H(t); t \ge  0\}$ is a Gaussian process with mean zero and covariance function
$$ \ep\left[B_H(t)B_H(s)\right]=\frac{1}{2}\left(t^{2H}+s^{2H}-|t-s|^{2H}\right), $$
and $B_{\frac 12}(t)=B(t)$ is a standard Borwnian motion. Duker,  Li and  Linde \cite{DLL2000}  obtained the following Chung-type law of the iterated logarithm:
\begin{equation}\label{CLILforIB}
\liminf_{T\to \infty}  \frac{(\log\log T)^{3/2}}{T^{3/2-\alpha}}\sup_{0\le t\le T}\left|\int_0^t s^{-\alpha}B(s)ds\right|=C_{\alpha},
\end{equation}
where $0<C_{\alpha}<\infty$ and $\alpha<3/2$.    The exact  value of $ C_{\alpha} $ has remained unknown for a long time. This paper intends to investigate the same problem for the fractional Brownian motion and to determine the precise  limit value.

  One of the key step in the investigation of $B_H(t)$ is the following useful representation,
 $$ B_H(t)=a_H\big(W_H(t)+Z_H(t)\big), $$
 where
\begin{align}\label{eqdefaH}a_H=& \Gamma\big(H+1/2\big)\left(\frac{1}{2H}+\int_{-\infty}^0\left((1-s)^{H-1/2}-(-s)^{H-1/2}\right)^2ds\right)^{-1/2},  \\\label{eqdefZH}Z_H(t)=&\frac{1}{\Gamma\big(H+1/2\big)}\int_{-\infty}^0\left((t-s)^{H-1/2}-(-s)^{H-1/2}\right)dB(s),\;\; t\ge 0,\\
\label{eqdefW} & W_{\lambda}(t)=\frac{1}{\Gamma\big(\lambda+1/2\big)}\int_0^t (t-s)^{\lambda-1/2} dB(s),\;\; t\ge 0,
\end{align}
 $\lambda>0$ is a constant, and   $\Gamma(a)$ is a gamma function. $W_{\lambda}(t)$ is called the Riemann–Liouville fractional processes. Li and Linde \cite{LiLinde1998b} obtained the following small ball probability for $W_{\lambda}(t)$:
\begin{align}\label{eq:constant:1} \lim_{\epsilon\to 0} \epsilon^{1/\lambda}\log \pr\left(\sup_{0\le t\le 1}|W_{\lambda}(t)|<\epsilon\right)
=-\kappa_{\lambda} 
\end{align}
where $0<\kappa_{\lambda}=-\inf_{\epsilon>0}\epsilon^{1/\lambda}\log \pr\left(\sup_{0\le t\le 1}|W_{\lambda}(t)|<\epsilon\right)<\infty$. The value $\kappa_{\frac{1}{2}}=\frac{\pi^2}{8}$ is well
known.   Chen and Li \cite{ChenLi2003} proved that $\frac{3}{8} \leq \kappa_{\frac{3}{2}} \leq (2\pi)^{2/3} \cdot \frac{3}{8} $. It is clear that the constant $ C_{\alpha} $ in   \eqref{CLILforIB} is $ \kappa_{\frac{3}{2}}^{3/2} $ when $ \alpha=0 $, a result first obtained by Khoshnevisan and Shi \cite{KhoshnevisanShi1998}. We will prove that   $ C_{\alpha}=\left( \frac{\kappa_{\frac{3}{2}}}{1 - {2\alpha}/{3}} \right)^{3/2} $ for all $ \alpha < 3/2$.

 Let $\mathcal{C}[0,\infty)$ be the space of continuous functions. Define $I, I_m:\mathcal{C}[0,\infty)\to \mathcal{C}[0,\infty)$ as 
$$  I(w)(t)=\int_0^tw(s) ds,   \;\;
  I_0(w)(t)=w(t),$$
  $$ I_m(w)(t)=I(I_{m-1}(w))(t)=\frac{1}{(m-1)!}\int_0^t(t-s)^{m-1}w(s)ds,\;\; t\ge 0,\;\; m=1,2,\cdots. $$
For a real positive number $\gamma$, define the Riemann-Liouville fractional integral operator $I_{\gamma}$ of order $\gamma$ as
$$ I_{\gamma}w(t)=I_{\gamma}(w)(t)=\frac{1}{\Gamma(\gamma)}\int_0^t(t-s)^{\gamma-1}w(s)ds,\; \; t\ge 0. $$ 
Furthermore, for $\alpha$, $\alpha_0$ and $\bm \alpha=(\alpha_1,\ldots,\alpha_m)$, we define the weighted integral operators $J_{\alpha},  J_{m,\bm\alpha}$ as 
$$J_{\alpha}(w)(t)=\int_0^t s^{-\alpha}w(s)ds,  $$
$J_{m, \bm \alpha}=I_0$  for $m=0$, and
\begin{align*} 
J_{m,\bm \alpha}(w)(t)=&\int_0^ts_m^{-\alpha_m}\int_0^{s_m}\cdots s_2^{-\alpha_2}\int_0^{s_2}s_1^{-\alpha_1}w(s_1)d s_1\; ds_2\cdots  d s_m\\
=& J_{\alpha_m}\circ J_{\alpha_{m-1}}\circ \cdots \circ J_{\alpha_1}(w)(t)  \;  \text{for } m=1,2,\ldots. 
\end{align*}
Sometimes, we write $I_{\gamma}(w)(t)$ as $I_{\gamma}w(t)$ or $I_{\gamma}(w(t))$, and  $J_{m,\bm \alpha}(w)(t)$ as $J_{m,\bm \alpha}w(t)$ or $J_{m,\bm \alpha}(w(t))$. It is easily seen that $J_{m,\bm 0}=I_m$ and $I_{\gamma_1}\circ I_{\gamma_2}=I_{\gamma_1+\gamma_2}$.

Li and Linde \cite{LiLinde1998b}, Chen and Li \cite{ChenLi2003} and Li and Linde  \cite{LiLinde1999} obtained the small ball probabilities  and     Chung's laws of the iterated logarithm for the fractional Brownian motion $B_H$,  the $m$-fold integrated Brownian motion $I_m(B)$ and the Riemann-Liouville fractionally integrated fractional Brownian motion $I_{\gamma}(B_H)$, respectively. This paper   aims to explore the same problems for the $m$-fold weighted integrals of  $I_{\gamma}(B_H)$ and $W_{\lambda}$.  
\begin{theorem}\label{th:1:1}
For all  $m\ge 0$, $\gamma\ge 0$, $\alpha_1+\cdots+\alpha_i<H+\gamma+i$, $i=1,\ldots,m$, and $\alpha<H+\gamma+m$, we have that
\begin{align}\label{eq:th:1:1:1}
& \lim_{\epsilon\to 0} \epsilon^{1/(H+\gamma+m)}\log \pr\left(\sup_{0\le  t\le 1}\left|\frac{ J_{m,\bm\alpha}(I_{\gamma}B_H)(t)}{t^{\alpha-\alpha_1-\cdots-\alpha_m}}\right|<a_H \epsilon\right)\nonumber\\
=& \lim_{\epsilon\to 0} \epsilon^{1/(H+\gamma+m)}\log \pr\left(\sup_{0\le  t\le 1}\left|\frac{J_{m,\bm\alpha}(W_{H+\gamma})(t)}{t^{\alpha-\alpha_1-\cdots-\alpha_m}}\right|< \epsilon\right) 
=-\frac{\kappa_{H+\gamma+m}}{1-\alpha/(H+\gamma+m)}
\end{align}
and
\begin{align}\label{eq:th:1:1:2}
 &\liminf_{T\to \infty}  \frac{(\log\log T)^{H+\gamma+m}}{T^{H+\gamma+m-\alpha}}\sup_{0\le  t\le T}\left|\frac{ J_{m,\bm\alpha}(I_{\gamma}B_H)(t)}{t^{\alpha-\alpha_1-\cdots-\alpha_m}}\right|\nonumber\\
 =&a_H\liminf_{T\to \infty} \frac{(\log\log T)^{H+\gamma+m}}{T^{H+\gamma+m-\alpha}}\sup_{0\le  t\le T}\left|\frac{J_{m,\bm\alpha}(W_{H+\gamma})(t)}{t^{\alpha-\alpha_1-\cdots-\alpha_m}}\right|\nonumber\\
 = &a_H\left( \frac{\kappa_{H+\gamma+m}}{ 1-\alpha/(H+\gamma+m)}\right)^{H+\gamma+m}\;\; a.s.
\end{align}
\end{theorem}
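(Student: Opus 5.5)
Set $\lambda=H+\gamma+m$ and $\beta=\alpha_1+\cdots+\alpha_m$. The plan has four steps. First, use $B_H=a_H(W_H+Z_H)$ to reduce everything to the Riemann--Liouville process. Second, turn the weighted, normalized sup into a sup of a stationary-type process on a logarithmic time scale. Third, read off the small ball rate from a subadditivity argument in that time scale. Fourth, derive the Chung law from the small ball estimate in the usual way.

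\emph{Reduction.} Since $I_\gamma W_H=W_{H+\gamma}$, we have
\[
J_{m,\bm\alpha}(I_\gamma B_H)=a_H\bigl(J_{m,\bm\alpha}(W_{H+\gamma})+J_{m,\bm\alpha}(I_\gamma Z_H)\bigr).
\]
The process $Z_H$ is smooth on $(0,\infty)$, independent of $W_H$, and has entropy (small ball) exponent strictly smaller than $1/\lambda$, as in Li--Linde. So by the standard Anderson-inequality / weak-decoupling argument (Li's weak correlation inequality), the $Z_H$ part changes neither the small ball constant nor the Chung limit. From here on we work only with $X(t)=J_{m,\bm\alpha}(W_{\lambda_0})(t)/t^{\alpha-\beta}$, where $\lambda_0=H+\gamma$.

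\emph{Small ball rate.} The key structure is self-similarity: $J_{m,\bm\alpha}(W_{\lambda_0})(ct)\overset{d}{=}c^{\lambda-\beta}J_{m,\bm\alpha}(W_{\lambda_0})(t)$, so $X(ct)\overset d= c^{\lambda-\alpha}X(t)$.

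We compare $X$ with the unweighted $m$-fold integral $I_m W_{\lambda_0}=W_\lambda$ locally. On a block $[e^{-k-1},e^{-k}]$, the weights $s^{-\alpha_i}$ are nearly constant after rescaling. The increment of $X$ over the block, given the past, then behaves like a scaled copy of $W_\lambda$ on an interval of relative length $1-e^{-1}$, with scale factor $e^{-k(\lambda-\alpha)}$.

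Heuristically, the requirement $|X|<\epsilon$ is active on the blocks where $e^{-k(\lambda-\alpha)}\gtrsim\epsilon$, that is for $t\ge t_\epsilon=\epsilon^{1/(\lambda-\alpha)}$. At time $t$ the effective tolerance for $W_\lambda$ on unit scale is $\epsilon t^{-(\lambda-\alpha)}$. Integrating the local cost $\kappa_\lambda(\epsilon t^{\alpha-\lambda})^{-1/\lambda}dt$ gives
\[
\kappa_\lambda\epsilon^{-1/\lambda}\int_0^1 t^{(\alpha-\lambda)/\lambda}\,dt=\frac{\kappa_\lambda}{1-\alpha/\lambda}\,\epsilon^{-1/\lambda}.
\]
The hypothesis $\alpha<\lambda$ is exactly what makes this integral converge. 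This produces the claimed constant.

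To make the heuristic rigorous, partition $[0,1]$ into many subintervals $[t_{j-1},t_j]$ of small mesh, placed on a geometric grid near $0$. On each subinterval, freeze the weights $t^{-\alpha_i}$ and the normalization $t^{\beta-\alpha}$ between their extremes. Then use the Markov-type decomposition of $W_\lambda$ (the past contributes a smooth, finite-dimensional-like term) together with Anderson's inequality and Khatri--Šidák / weak correlation bounds. This gives upper and lower bounds as Riemann sums of $\kappa_\lambda(\epsilon c_j)^{-1/\lambda}(t_j-t_{j-1})$. Finally let the mesh go to $0$.

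The tail near $0$ is handled separately. On $[0,\delta]$, self-similarity gives a cost of order $\delta^{1-\alpha/\lambda}\epsilon^{-1/\lambda}$, which is negligible as $\delta\to0$. The conditions $\alpha_1+\cdots+\alpha_i<H+\gamma+i$ guarantee that every iterated integral converges at $0$.

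\emph{Chung LIL.} Standard. For the lower bound, apply the small ball upper estimate along $T_k=e^{k^{p}}$ and use Borel--Cantelli, with monotonicity in $T$ to interpolate. For the upper bound, take $T_k=k^{k}$ and the events on $[0,T_k]$. Make them nearly independent by subtracting the contribution of the path before $T_{k-1}$, which is negligible at scale $T_k$ because $T_{k-1}/T_k\to0$ and the weighted process is smooth away from $0$. Then conclude with the second Borel--Cantelli lemma. The exponent $(\log\log T)^{\lambda}$ and the constant $(\kappa_\lambda/(1-\alpha/\lambda))^{\lambda}$ follow from inverting the rate $\epsilon^{-1/\lambda}\cdot\kappa_\lambda/(1-\alpha/\lambda)=(1+\eta)\log\log T$.

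\emph{Main obstacle.} I expect the localization step in the small ball estimate to be the hard part: showing that the cost of keeping $|X|$ small genuinely adds up over subintervals, with the local constant exactly $\kappa_\lambda$ despite the long memory of $W_{\lambda_0}$ and the iterated weights. Both directions of the Riemann-sum comparison need the conditional/decoupling inequalities to lose only $o(\epsilon^{-1/\lambda})$.
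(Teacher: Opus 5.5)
\textbf{Verdict: there is a genuine gap.} Your reduction to the Riemann--Liouville process (via $B_H=a_H(W_H+Z_H)$ and Lemma~\ref{lem:ChungLIL1}) matches the paper. Your Chung-law sketch is the standard argument the paper omits. The gap is in the central step: you only outline it, and as outlined it does not handle the inner weights.

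\textbf{Minor point on the reduction.} The $Z_H$ estimate with the singular weights at $0$ does not follow from smoothness of $Z_H$ on $(0,\infty)$. The paper gets it from the Lifshits--Linde bound for $Z_H$, Lemma~\ref{lem:kernel}, and the self-similarity Lemma~\ref{lem:selfsimilar}.

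\textbf{Freezing the inner weights fails.} Bounding $s^{-\alpha_i}$ between its extremes gives no comparison, because $W_{\lambda_0}$ changes sign. There is no monotonicity of $\sup_t|\int_0^t w(s)W_{\lambda_0}(s)\,ds|$ in $w$. You would have to show that the error from the weight variation costs $o(\epsilon^{-1/\lambda})$, jointly over all blocks and in both directions. That is exactly the obstacle you flag but do not resolve. Also, on the blocks $[e^{-k-1},e^{-k}]$ the weights vary by a factor $e^{|\alpha_i|}$, so they are not nearly constant there.

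\textbf{Missing a priori bound.} Handling $[0,\delta]$ ``by self-similarity'' presupposes $-\log\pr(\sup_{0\le t\le 1}t^{-\alpha}|W_\lambda(t)|<\epsilon)\preccurlyeq\epsilon^{-1/\lambda}$. This is not obvious for $\alpha>0$; the paper proves it in Lemma~\ref{lem:selfsimilar}(i).

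\textbf{Heuristic integrand.} The local cost of $t^{-\alpha}W_\lambda$ on $[t,t+dt]$ is
\[
\kappa_\lambda(\epsilon t^{\alpha})^{-1/\lambda}\,dt=\kappa_\lambda\epsilon^{-1/\lambda}t^{-\alpha/\lambda}\,dt .
\]
Equivalently it is $\kappa_\lambda(\epsilon t^{\alpha-\lambda})^{-1/\lambda}\,dt/t$, since the block's relative length is $dt/t$. Then $\int_0^1 t^{-\alpha/\lambda}dt=1/(1-\alpha/\lambda)$. The integral you wrote, $\int_0^1t^{(\alpha-\lambda)/\lambda}dt$, equals $\lambda/\alpha$ and diverges for $\alpha\le 0$.

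\textbf{The missing idea.} The inner weights are a strictly lower-order perturbation globally, so they need no localization. With $Y=I(X)$, integration by parts gives
\[
t^{-(\alpha-\alpha_1)}J_{\alpha_1}(X)(t)-t^{-\alpha}I(X)(t)=t^{-(\alpha-\alpha_1)}\int_0^t\bigl(s^{-\alpha_1}-t^{-\alpha_1}\bigr)\,dY(s)=\alpha_1\,t^{-(\alpha-\alpha_1)}\int_0^t s^{-\alpha_1-1}Y(s)\,ds .
\]
The right side is a weighted integral of the smoother self-similar process $I(X)$, so its small-ball cost is of strictly smaller order. Iterating (Lemma~\ref{lem:selfsimilar}(iii)) shows that
\[
t^{-(\alpha-\alpha_1-\cdots-\alpha_m)}J_{m,\bm\alpha}(W_{H+\gamma})-t^{-\alpha}W_{H+\gamma+m}
\]
has cost $\preccurlyeq\epsilon^{-1/(\lambda+1)}$. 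By Lemma~\ref{lem:ChungLIL1} the problem then reduces to $t^{-\alpha}W_\lambda$. This also explains why the constant depends on $\alpha$ but not on $\alpha_1,\dots,\alpha_m$. A localized version of the same identity could rescue your block scheme, but the global identity makes it unnecessary.

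\textbf{The one-weight step.} For $t^{-\alpha}W_\lambda$ the paper uses no Riemann sum. Set $P(\epsilon)=\log\pr(\sup_{0\le t\le1}t^{-\alpha}|W_\lambda(t)|<\epsilon)$ and $L=\liminf_{\epsilon\to0}\epsilon^{1/\lambda}P(\epsilon)$, which is finite by the a priori bound. The paper splits only at $\delta$ and uses three facts:
\begin{itemize}
\item self-similarity on $[0,\delta]$;
\item the independent innovation $\int_\delta^t(t-s)^{\lambda-1/2}dB(s)$ on $[\delta,1]$, which is a copy of $W_\lambda$ on an interval of length $1-\delta$;
\item negligibility of $\int_0^\delta(t-s)^{\lambda-1/2}dB(s)$ (Lemma~\ref{lem:6}).
\end{itemize}
These give $L\ge\delta^{1-\alpha/\lambda}L-\kappa_\lambda(1-\delta)\delta^{-|\alpha|/\lambda}$, and an analogous reverse inequality for the limsup. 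Letting $\delta\to1$ yields $-\kappa_\lambda/(1-\alpha/\lambda)$.
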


The Chung-type laws of the iterated logarithm  \eqref{eq:th:1:1:2} are derived from the small ball estimates provided in \eqref{eq:th:1:1:1}, along with a recalling argument and the Borel-Cantelli Lemma. We will omit the proof since it is standard. When $H=1/2$, $\bm \alpha=\bm 0$ and $\gamma=0$, $J_{m,\bm\alpha}(I_{\gamma}B_H)=I_m(B)$ represents the $m$-fold integrated   Brownian motion,   and \eqref{eq:th:1:1:1}   was provided by Chen and Li \cite{ChenLi2003}  for  $\alpha=0$.  Additionally, when $m=0$, $\bm\alpha=\bm 0$ and $\gamma=0$, $J_{m,\bm\alpha}(I_{\gamma}B_H)=B_H$ is the fractional Brownian motion, and   \eqref{eq:th:1:1:1} was first provided for the special case of $\alpha=0$ by Li and Linde \cite{LiLinde1998b} and Shao \cite{Shao2003}, independently, and later by Lifshits and Linde \cite{LifshitsLinde2005},   who considered a general weight function $ w(t)$ instead of a specific form like $t^{-\alpha}$.  Furthermore, when $\bm\alpha=\bm 0$, $J_{m,\bm\alpha}(I_{\gamma}B_H)=I_{m+\gamma}(B_H)$, and \eqref{eq:th:1:1:1}   was established by Li and Linde \cite{LiLinde1999} for $\alpha=0$.

  The proof of \eqref{eq:th:1:1:1} will be given in Section \ref{sect:th:2}. Under the condition that $\alpha_1+\cdots+\alpha_i<H+\gamma+i$, $i=1,\ldots,m$, $J_{m,\bm\alpha}(I_{\gamma}B_H)$ and $J_{m,\bm\alpha}(W_{H+\gamma})$ are well defined and continuous self-similar Gaussian processes. In Section \ref{sect:th:2}, the small ball estimators of  weighted integrals of  a self-similar Gaussian process are also studied, and the precise  small ball probabilities of   $J_{m,\bm\alpha}(I_{\gamma}B_H)$ and $J_{m,\bm\alpha}(W_{H+\gamma})$ under $L^q$-norm  are obtained.

  It is a fascinating question to consider what occurs in \eqref{eq:th:1:1:2} when $\alpha = H +\gamma+ m$. In this case, the supremum must be taken away from zero; otherwise, the value of the supremum becomes infinite. The following theorem presents the results regarding the limit behaviors for $\alpha = H +\gamma+ m$.
  
  \begin{theorem}\label{th:1:2}   We have that 
\begin{align}\label{eq:th:1:2:1}
 \lim_{T\to \infty}  \Bigg\{ & \sup_{1\le t\le T}\bigg|  \frac{J_{m,\bm\alpha}(I_{\gamma}B_H)(t)}{t^{H+\gamma+m-\alpha_1-\cdots-\alpha_m}}\bigg|-\sigma_{B,m,\bm \alpha,\gamma}\sqrt{2\log\log T}\Bigg\}=0\;\; a.s. 
  \end{align} 
for all $m\ge 0$,   $\gamma\ge 0$ and  $\alpha_1+\cdots+\alpha_i<H+\gamma+i$, $i=1,\ldots,m$; and
\begin{align}
  \label{eq:th:1:2:3}
 \lim_{T\to \infty}  \Bigg\{ & \sup_{1\le t\le T}\bigg|  \frac{J_{m,\bm\alpha}(W_{\lambda})(t)}{t^{\lambda+m-\alpha_1-\cdots-\alpha_m}}\bigg|-\sigma_{W,m,\bm \alpha,\lambda}\sqrt{2\log\log T}\Bigg\}=0\;\; a.s.
\end{align}
for all $m\ge 0$,   $\lambda> 0$ and  $\alpha_1+\cdots+\alpha_i<\lambda+i$, $i=1,\ldots,m$, where
$$\sigma_{B,m,\bm \alpha,\gamma}^2= \Var\left\{J_{m,\bm\alpha}(I_{\gamma}B_H)(1)\right\}, $$
$$\sigma_{W,m,\bm \alpha,\lambda}^2= \Var\left\{J_{m,\bm\alpha}(W_{\lambda})(1)\right\} $$
and
$$\sigma_{B,0, 0,\gamma}^2=\frac{1}{\Gamma^2(\gamma)}\int_0^1\int_0^1x^{2H+1}[(1-x)(1-xy)]^{\gamma-1}\left[1+y^{2H}-(1-y)^{2H}\right]dxdy,$$
$$\sigma_{W,0, 0,\lambda}^2= \Var\left\{ W_{\lambda}(1) \right\}=\frac{1}{2\lambda \Gamma^2\big(\lambda+1/2\big)}.$$

\end{theorem}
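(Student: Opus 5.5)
The natural route is the exponential time change. Set $X(t)=J_{m,\bm\alpha}(I_{\gamma}B_H)(t)$ (or $J_{m,\bm\alpha}(W_\lambda)(t)$), which is a continuous centered self-similar Gaussian process of index $\kappa:=H+\gamma+m-\alpha_1-\cdots-\alpha_m$ (respectively $\lambda+m-\alpha_1-\cdots-\alpha_m$). This index is positive by the standing assumption with $i=m$. Define $U(u)=e^{-\kappa u}X(e^u)$ for $u\ge 0$. By self-similarity, $U$ is a \emph{stationary} centered Gaussian process with variance $\sigma^2=\Var X(1)$, which is $\sigma^2_{B,m,\bm\alpha,\gamma}$ or $\sigma^2_{W,m,\bm\alpha,\lambda}$. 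The statement then becomes
$$\sup_{0\le u\le \log T}|U(u)|-\sigma\sqrt{2\log\log T}\to 0 \quad a.s.,$$
that is, $\sup_{0\le u\le S}|U(u)|-\sigma\sqrt{2\log S}\to0$ with $S=\log T$. This is the classical almost-sure behaviour of the maximum of a stationary Gaussian process (Pickands / Qualls–Watanabe type). The ingredients to verify are: (i) local regularity of the correlation $r(h)=\ep U(0)U(h)/\sigma^2$ near $0$, namely $1-r(h)\sim C|h|^{\theta}$ for some $\theta\in(0,2]$, or at least $1-r(h)\le C|h|^{\theta}$; and (ii) decay of $r$ at infinity, for instance $r(h)\log h\to0$ or $r(h)=O(e^{-\delta h})$.

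For (ii), compute $r(h)=e^{-\kappa h}\ep[X(1)X(e^h)]/\sigma^2$. I would write $X(t)$ as a stochastic integral against $B$ (for $W_\lambda$ the kernel lives on $[0,t]$; for $B_H$ one uses the Mandelbrot–Van Ness representation $a_H(W_H+Z_H)$ together with the linearity of $J_{m,\bm\alpha}\circ I_\gamma$). Then I would bound $\ep[X(1)X(t)]$ for large $t$ by $C t^{\kappa-\delta}$ for some $\delta>0$. For the Riemann–Liouville part, $\ep[X(1)X(t)]=\int_0^1 K_1(s)K_t(s)ds$, and since $K_t(s)$ for $s\le1\ll t$ grows like a power $t^{\kappa-\eta}$ with $\eta>0$, polynomial decay gives $r(h)=O(e^{-\delta h})$. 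The $Z_H$ part is handled similarly, because its increments far from the origin have a smoothing kernel. This is a routine but tedious computation, and it only needs some exponential rate.

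For (i), use Hölder continuity of $X$ on $[1,e]$. Since $\ep|X(t)-X(s)|^2\le C|t-s|^{2\min(H,1)}$ (integration only improves regularity), we get $\ep|U(u)-U(v)|^2\le C|u-v|^{\theta}$ with $\theta=2\min(H+\gamma,1)\wedge 2>0$.

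With (i) and (ii) in hand, the upper half $\limsup\big(\sup|U|-\sigma\sqrt{2\log S}\big)\le 0$ follows from Borell–TIS. On $[0,S]$, Dudley's bound together with (i) gives $\ep\sup|U|\le \sigma\sqrt{2\log S}+O(\log\log S/\sqrt{\log S})$, via a union bound over a grid of mesh $S^{-K}$ plus a chaining modulus. Concentration with deviation $\varepsilon$ has probability $e^{-\varepsilon^2/(2\sigma^2)\cdot(\ldots)}$, which is not summable along $S_k=k$ directly. So I would instead take the expectation bound together with Borell deviations of size $\varepsilon$ along $S_k=e^{k^{1/2}}$ and interpolate by monotonicity. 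Since $\sqrt{2\log S}$ varies slowly, the increments vanish.

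The lower half, $\liminf\big(\sup|U|-\sigma\sqrt{2\log S}\big)\ge0$, is the main obstacle, since it needs near-independence. I would take points $u_j=jL$ in $[0,S]$ with $L=(\log S)^2$, so that $|r(u_i-u_j)|\le e^{-\delta L}$ is tiny. By Berman's normal comparison inequality, $\pr(\max_j U(u_j)\le \sigma(\sqrt{2\log S}-\varepsilon))$ differs from the independent product by a negligible error. The product is $\exp(-(S/L)\,S^{-1}e^{\varepsilon\sqrt{2\log S}}(1+o(1)))$, which is summable along $S_k=k$, and monotone interpolation then finishes. Combining both halves and undoing the time change yields \eqref{eq:th:1:2:1} and \eqref{eq:th:1:2:3}. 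The explicit formulas for $\sigma^2_{B,0,0,\gamma}$ and $\sigma^2_{W,0,0,\lambda}$ come from direct computation of $\Var(I_\gamma B_H(1))$ using the covariance with the substitution $s=x,\ t=xy$, and from the Itô isometry $\int_0^1 (1-s)^{2\lambda-1}ds/\Gamma^2(\lambda+1/2)$.
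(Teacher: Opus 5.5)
Your reduction matches the paper's: a Lamperti transform to a stationary $U$, a H\"older bound on $r$ at $0$, and decay of $r$ at infinity. Your lower half is sound: a sparse grid of spacing $(\log S)^2$ plus Berman's inequality, with only a harmless polylogarithmic factor missing from your product estimate. The gap is in the upper half. The paper simply invokes Pickands' theorem (Lemma~\ref{lem:Pickans}, applied to $U$ and to $-U$); you try to prove the upper bound by hand, and that step fails.

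Borell--TIS gives $\pr(\sup_{[0,S]}U>\ep\sup_{[0,S]}U+\varepsilon)\le e^{-\varepsilon^2/(2\sigma^2)}$, where the variance proxy is $\sup\Var U=\sigma^2$. There is no $S$-dependent factor in the exponent. The bound is therefore a constant and is not summable along $S_k=e^{\sqrt k}$ or along any other sequence. Forcing summability requires $\varepsilon_k\asymp\sqrt{\log k}$, which only yields $\sup_{[0,S]}|U|\le\sigma\sqrt{2\log S}+O(\sqrt{\log\log S})$, not the additive $o(1)$ the theorem asserts. This is intrinsic to the tool: Borell--TIS sees fluctuations of order $\sigma$, while the maximum over $[0,S]$ fluctuates on the scale $(\log S)^{-1/2}$. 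Separately, a grid of mesh $S^{-K}$ has $S^{1+K}$ points, so your union bound gives $\sigma\sqrt{2(1+K)\log S}$, which has the wrong constant; the mesh must be polylogarithmic.

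The repair is to bound the event directly rather than the expectation.
\begin{itemize}
\item Take a grid of mesh $\delta_S=(\log S)^{-K}$ with $K\theta>1$. A union bound over its $\asymp S(\log S)^K$ points gives $\pr(\max_{\mathrm{grid}}|U|>\sigma\sqrt{2\log S}+\varepsilon/2)\le C(\log S)^K e^{-\varepsilon\sqrt{2\log S}/(2\sigma)}$.
\item The $\delta_S$-oscillation of $U$ on each of the $\asymp S$ unit blocks has variance at most $C\delta_S^{\theta}$ and, by Dudley, mean $o(1)$. Here Borell--TIS is effective and bounds the total probability by $CS\exp(-c\varepsilon^2(\log S)^{K\theta})$.
\item Both bounds are summable along $S_k=e^{\sqrt k}$. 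Since $\sqrt{2\log S_{k+1}}-\sqrt{2\log S_k}\to0$, your monotone interpolation then closes the argument.
\end{itemize}

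With this fix you have a self-contained proof of the special case of Pickands' theorem that the paper quotes. The paper puts its effort instead into the covariance conditions. It propagates exponential decay and the H\"older bound by induction through $r_m(h)=\iint_{(-\infty,0]^2}r_{m-1}(h+u-v)e^{\kappa(u+v)}\,du\,dv$, where $\kappa$ is the index at level $m$, starting from the explicit fBm covariance. That is cleaner than your kernel estimates for the decay condition, which are only sketched, especially for the $Z_H$ part.
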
 
\begin{remark}\label{rk:1} 
When $H\ne 1/2$, $\sigma_{B,m,\bm \alpha,\gamma}>a_H\sigma_{W,m,\bm \alpha, H+\gamma}$. If $H=1/2$, then $a_H=1$, $\sigma_{B,m,\bm \alpha,\gamma}=\sigma_{W,m,\bm \alpha, H+\gamma}$ and $\sigma_{B,m,\bm 0,\gamma}=\sigma_{W,m,\bm 0, H+\gamma}=\sigma_{W,0, 0, H+\gamma+m}$. 
\end{remark}

The proof of Theorem \ref{th:1:2} will be given in Section \ref{sect:th:4}. If $H=1/2$, $m=1$ and $\gamma=0$, then $\sigma_{B,m, \alpha_1,\gamma}^2=\frac{2}{(2-\alpha_1)(3-2\alpha_1)}$,  $a_H=1$, and 
$W_{H+2}(t)=\int_0^t B(s)ds $
is the integrated Brownian motion.  Thus, we have the following corollary.

\begin{corollary}\label{cor:1.1} Let $B(t)$ be a standard Brownian motion. For all  $\alpha<3/2$, we have that
\begin{equation}\label{eq:cor:1:1}
 \liminf_{T\to \infty} \frac{(\log\log T)^{3/2}}{T^{3/2-\beta}}  \sup_{0\le t\le T}\left| \frac{\int_0^t  s^{-\alpha}B(s)ds}{t^{\beta-\alpha}}\right|= \left( \frac{\kappa_{\frac{3}{2}}}{ 1-2\beta/3}\right)^{3/2}\;\; a.s.,\ 
\end{equation}
\begin{equation}\label{eq:cor:1:2}
 \lim_{\epsilon\to 0}\epsilon^{2/3}\log\pr\left( \sup_{0\le t\le 1}\left| \frac{\int_0^t  s^{-\alpha}B(s)ds}{t^{\beta-\alpha}}\right|<\epsilon\right)= - \frac{\kappa_{\frac{3}{2}}}{ 1-2\beta/3}, 
\end{equation}
for all $\beta<3/2$, and
\begin{equation}\label{eq:cor:1:3}
 \lim_{T\to \infty} \frac{1}{\sqrt{\log\log T}}  \sup_{1\le t\le T}\left| \frac{\int_0^t  s^{-\alpha}B(s)ds}{t^{3/2-\alpha}}\right|= 
 \frac{2}{\sqrt{(2-\alpha)(3-2\alpha)}}\;\; a.s. \\
\end{equation}
\end{corollary}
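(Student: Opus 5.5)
The corollary is the special case $H=1/2$, $\gamma=0$, $m=1$, $\bm\alpha=(\alpha_1)$ with $\alpha_1=\alpha$ of Theorems \ref{th:1:1} and \ref{th:1:2}. The only work is to check the hypotheses and compute the constants.

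First, the identification. For $H=1/2$ the representation gives $a_{1/2}=1$ and $Z_{1/2}\equiv 0$, so $B_{1/2}=W_{1/2}=B$. With $\gamma=0$ we have $I_0B=B$. Hence
$$J_{1,(\alpha)}(I_0B_{1/2})(t)=J_{1,(\alpha)}(W_{1/2})(t)=\int_0^t s^{-\alpha}B(s)\,ds.$$
The hypothesis $\alpha_1<H+\gamma+1=3/2$ is exactly the assumption $\alpha<3/2$. The exponent $H+\gamma+m$ equals $3/2$, and the normalizing power $t^{\alpha-\alpha_1-\cdots-\alpha_m}$ in Theorem \ref{th:1:1} becomes $t^{\beta-\alpha}$ once the free parameter there (called $\alpha$ in the theorem) is renamed $\beta$. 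The requirement that this parameter be $<H+\gamma+m$ becomes $\beta<3/2$. With these substitutions, \eqref{eq:th:1:1:1} gives \eqref{eq:cor:1:2} with constant $-\kappa_{3/2}/(1-2\beta/3)$, and \eqref{eq:th:1:1:2} gives \eqref{eq:cor:1:1} with constant $a_{1/2}\bigl(\kappa_{3/2}/(1-2\beta/3)\bigr)^{3/2}=\bigl(\kappa_{3/2}/(1-2\beta/3)\bigr)^{3/2}$.

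For \eqref{eq:cor:1:3}, apply \eqref{eq:th:1:2:1} with the same parameters. It gives
$$\sup_{1\le t\le T}\Bigl|t^{\alpha-3/2}\int_0^t s^{-\alpha}B(s)\,ds\Bigr|-\sigma\sqrt{2\log\log T}\to 0 \quad\text{a.s.}$$
Dividing by $\sqrt{\log\log T}$, the limit is $\sqrt2\,\sigma$. It remains to compute
$$\sigma^2=\Var\Bigl(\int_0^1 s^{-\alpha}B(s)\,ds\Bigr)=\int_0^1\!\!\int_0^1 s^{-\alpha}u^{-\alpha}(s\wedge u)\,ds\,du.$$
By symmetry,
$$\sigma^2=2\int_0^1 u^{-\alpha}\int_0^u s^{1-\alpha}\,ds\,du=\frac{2}{2-\alpha}\int_0^1 u^{2-2\alpha}\,du=\frac{2}{(2-\alpha)(3-2\alpha)}.$$
Both integrals converge because $\alpha<3/2$. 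Therefore
$$\sqrt2\,\sigma=\frac{2}{\sqrt{(2-\alpha)(3-2\alpha)}},$$
which is \eqref{eq:cor:1:3}.

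There is no real obstacle here. The one point needing care is bookkeeping: the symbol $\alpha$ in Theorem \ref{th:1:1} plays the role of $\beta$ in the corollary, while the corollary's $\alpha$ is the weight $\alpha_1$. One should also confirm that the variance integral is finite near $0$ under $\alpha<3/2$, which the computation above shows.
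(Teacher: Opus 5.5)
Your proposal is correct and follows essentially the paper's route: the paper also reads the corollary off Theorems \ref{th:1:1} and \ref{th:1:2} with $H=1/2$, $\gamma=0$, $m=1$, $\alpha_1=\alpha$ (the theorem's free exponent renamed $\beta$), using $a_{1/2}=1$ and $\sigma^2=\frac{2}{(2-\alpha)(3-2\alpha)}$. The only difference is that you carry out the covariance integral and the conversion $\sqrt{2}\,\sigma=2/\sqrt{(2-\alpha)(3-2\alpha)}$ explicitly, which the paper just states.
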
 
\eqref{eq:cor:1:3} relates to the limit law of the iterated logarithm established by Chen \cite{ChenX2015} and Robbins and Siegmund \cite{RS1972}.  According to \eqref{eq:cor:1:1}, the constant in \eqref{CLILforIB} is given by $ C_{\alpha} = \left( \frac{\kappa_{\frac{3}{2}}}{1 - {2\alpha}/{3}} \right)^{3/2} $.  In the last section, we will give an application of \eqref{eq:cor:1:1} and \eqref{eq:cor:1:2} to an urn model called the randomized play-the-winner rule.

  When $\alpha = 3/2$,   \eqref{CLILforIB} fails to hold. In this case, since the integral $\int_0^t s^{-\alpha}B(s)ds$ is not finite, we consider   $\int_1^t s^{-\alpha}B(s)ds$ instead.  Notice that   $\int_1^t s^{-3/2}B(s)ds=2\int_1^ts^{-1/2} d B(s)+2 B(1)-2 t^{-1/2}B(t)$,
$$\limsup_{T\to \infty} \frac{1}{\sqrt{2\log\log T}}\sup_{1\le t\le T} t^{-1/2}|B(t)| =1 \;\; a.s. $$
and 
$$ \left\{\int_1^ts^{-1/2} d B(s); t\ge 1\right\}\overset{\mathcal{D}}=\left\{B(\log t); t\ge 1\right\}. $$
We have that
\begin{align*}
  & \liminf_{T\to \infty} \sqrt{\frac{\log\log\log T}{\log T}}\sup_{1\le t\le T}\left|\int_1^t s^{-3/2} B(s)ds\right|\\
  =& 
 2\liminf_{S\to \infty} \sqrt{\frac{\log\log S }{S}}\sup_{0\le s\le S}\left| B(s)\right|=
\frac{\pi}{\sqrt{2}} \;\; a.s.
\end{align*}
In general, we have the following theorem. 
\begin{theorem}\label{th:1:3} We have that
\begin{align}\label{eq:th:1:3:1}
   \liminf_{T\to \infty}   &\sqrt{\frac{\log\log\log T}{\log T}}  \sup_{1\le t\le T}\left|\int_1^t  \frac{J_{m, \bm\alpha}(I_{\gamma}B_H)(s)}{s^{H+\gamma+m+1-\alpha_1-\cdots-\alpha_m}}ds\right|\nonumber\\
   &= \frac{\pi}{2} \frac{\sqrt{\beta(2H,1-H)}}{\prod_{i=1}^m\big(H+\gamma+i-\alpha_1-\cdots-\alpha_i\big)}\frac{\Gamma(H+1)}{\Gamma(\gamma+1+H)} \;\; a.s.
   \end{align}
   for all $m\ge 0$, $\gamma\ge 0$ and   $\alpha_1+\cdots+\alpha_i<H+\gamma+i$, $i=1,\ldots,m$, and
    \begin{align}
   \label{eq:th:1:3:2}
   \liminf_{T\to \infty}   &\sqrt{\frac{\log\log\log T}{\log T}}  \sup_{1\le t\le T}\left|\int_1^t  \frac{J_{m, \bm\alpha}(W_{\lambda})(s)}{s^{\lambda+m+1-\alpha_1-\cdots-\alpha_m}}ds\right| \nonumber\\
   &= \frac{\pi}{\sqrt{8}}\frac{1}{\prod_{i=1}^m\big(\lambda+i-\alpha_1-\cdots-\alpha_i\big)}\frac{\Gamma(1/2)}{\Gamma(\lambda+1)}\;\; a.s.,
\end{align}
 for all $m\ge 0$, $\lambda> 0$ and   $\alpha_1+\cdots+\alpha_i<\lambda+i$, $i=1,\ldots,m$, 
where $\beta(a,b)$ is the beta function. 
In particular, we have that 
$$   \liminf_{T\to \infty}\sqrt{\frac{\log\log\log T}{\log T}}\sup_{1\le t\le T}\left|\int_1^t s^{-(3/2+m)} I_{m}(B)(s)ds\right|=\frac{\pi}{\sqrt{8}}\frac{2^{m+1}}{(2m+1)!!} \;\; a.s.$$
 
\end{theorem}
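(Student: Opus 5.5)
The idea is to reduce everything to the Brownian case already treated just before the statement: a time-changed Brownian motion observed on a logarithmic scale, plus a remainder that is negligible at the scale $\sqrt{\log T/\log\log\log T}$. Write $\theta=H+\gamma+m-\alpha_1-\cdots-\alpha_m$ and $X(s)=J_{m,\bm\alpha}(I_\gamma B_H)(s)$, which is $(\theta+\alpha_1+\cdots+\alpha_m)$-self-similar. The integrand $X(s)/s^{\theta+1}$ is then $Y(\log s)\,ds/s$ with $Y(u)=e^{-(H+\gamma+m)u}X(e^u)$. By self-similarity $Y$ is a stationary Gaussian process (Lamperti transform), and
\[
\int_1^t X(s)s^{-\theta-1}ds=\int_0^{\log t}Y(u)\,du .
\]
The problem therefore becomes a Chung LIL for the integral of a stationary Gaussian process with integrable covariance. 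After the change $S=\log T$, the claimed normalization is $\sqrt{\log\log S/S}$, which is exactly Chung's LIL for a process close to $\sigma B(S)$ with $\sigma^2=\int_{-\infty}^\infty \mathrm{Cov}(Y(0),Y(u))\,du$. Chung's law $\liminf\sqrt{\log\log S/S}\sup_{s\le S}|B(s)|=\pi/\sqrt8$ then gives the constant $\sigma\pi/\sqrt8$.

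\textbf{Step 1 (identify $\sigma$ and the martingale part).} I would make the Brownian approximation explicit rather than use a general CLT. In the case $m=0$, $\gamma=0$, $H=1/2$, integration by parts gives $2\int_1^t s^{-1/2}dB(s)$ plus bounded-in-LIL terms, as shown before the theorem.

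For the general case I would induct on $m$. Since $J_{m,\bm\alpha}=J_{\alpha_m}\circ J_{m-1}$, integration by parts yields
\[
\int_1^t \frac{J_{\alpha_m}(V)(s)}{s^{\theta+1}}ds=\frac{1}{\theta}\Big[\int_1^t \frac{V(s)}{s^{\theta+\alpha_m}}ds-\frac{J_{\alpha_m}V(t)}{t^{\theta}}+J_{\alpha_m}V(1)\Big],
\]
where $V=J_{m-1,\bm\alpha_{m-1}}(I_\gamma B_H)$ and $\theta+\alpha_m-1=\theta_{m-1}$. The boundary term at $t$ is $O(\sqrt{\log\log T})$ a.s. by Theorem \ref{th:1:2}, which is $o(\sqrt{\log T/\log\log\log T})$, and the term at $1$ is a constant. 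This produces the factors $1/(H+\gamma+i-\alpha_1-\cdots-\alpha_i)$ and reduces the problem to $m=0$.

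For $m=0$ and the process $W_\lambda$, I would similarly use $I_{\lambda+1/2}$-type integration by parts, or a Fubini computation, to write $\int_1^t W_\lambda(s)s^{-\lambda-3/2}ds$ as $c\int_1^t s^{-1/2}dB(s)$ plus a stationary-type remainder on the log scale. Fubini gives the coefficient $\int_0^\infty$ of the kernel, namely $c=\frac{1}{\Gamma(\lambda+1/2)}\int_1^\infty(u-1)^{\lambda-1/2}u^{-\lambda-3/2}du=\frac{\Gamma(1/2)}{\Gamma(\lambda+1)}\cdot\frac{1}{\Gamma(\lambda+1/2)}\Gamma(\lambda+1/2)$, which should match the stated constant. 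For $B_H$ and $I_\gamma B_H$, I would use the Mandelbrot–Van Ness moving-average representation on the whole line, with the $Z_H$ part included. The resulting stationary $Y$ has a spectral density, and $\sigma^2=2\pi f_Y(0)$ produces $\beta(2H,1-H)$ and the Gamma ratio. Alternatively I would simply compute $\int\mathrm{Cov}$ directly from the covariance of $B_H$.

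\textbf{Step 2 (strong approximation).} The main obstacle is showing that $\int_0^S Y(u)du-\sigma\widetilde B(S)$ is $o(\sqrt{S/\log\log S})$ a.s. This precision is needed because the Chung scaling is finer than LIL scaling. When $H=1/2$ the remainder is a stationary Gaussian process with exponentially decaying covariance, so its running supremum is $O(\sqrt{\log S})$, which is enough. When $H\ne1/2$, $Y$ has only polynomially decaying covariance, $\sim u$-power $e^{-\min(H,1-H)u}$ on the log scale, which is in fact still exponential in $u$. I would therefore split $Y$ as a moving average of white noise, $Y=\int k(u-v)dB(v)$ with $\int|k|<\infty$. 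Then $\int_0^S Y=(\int k)B(S)+R(S)$, where $R$ is stationary plus endpoint terms. I would bound $R$ by Borel–Cantelli on a geometric grid using Gaussian tail bounds and Fernique.

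Both the lower and upper bounds of the liminf then follow from Chung's LIL for $\widetilde B$ with $\pi/\sqrt8$, multiplied by $|\int k|=\sigma$. The special case stated at the end of the theorem follows by taking $\lambda=1/2$ and $\alpha=\bm 0$, which gives $\prod_{i=1}^m(1/2+i)^{-1}=2^m/(2m+1)!!$ times $\Gamma(1/2)/\Gamma(3/2)=2$.
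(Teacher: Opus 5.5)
Your proposal is correct in substance, but it takes a different route from the paper.

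\textbf{Comparison with the paper.} The paper treats every $m$ at once through the Lamperti process $U_{m,\bm\alpha}$. It sets $V(t)=\int_0^tU_{m,\bm\alpha}$ and discretizes to the stationary Gaussian sequence $X_n=V(n)-V(n-1)$, whose covariances decay exponentially by \eqref{eq:proofth:4:2}. It then invokes Shao's almost sure invariance principle to get $V(n)-\widetilde\sigma W(n)=O(\log^{1/2}n)$, and reads off $\widetilde\sigma^2=\int_{\mathbb R}r_{m,\bm\alpha,\gamma}$ from the covariance recursion \eqref{eq:proofth:4:1}, which contributes a factor $\theta_i^{-2}$ per step, where $\theta_i=H+\gamma+i-\alpha_1-\cdots-\alpha_i$.

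You do two things differently:
\begin{enumerate}
\item You peel off the outer integrals pathwise. On the log scale your integration by parts is $U_m'=U_{m-1}-\theta_mU_m$, i.e. $V_m(S)=\theta_m^{-1}\bigl(V_{m-1}(S)-U_m(S)+U_m(0)\bigr)$. Theorem~\ref{th:1:2} makes the boundary terms $O(\sqrt{\log\log T})$, which is negligible at the Chung scale. This is the pathwise form of the paper's covariance recursion, and it makes the product of the $\theta_i^{-1}$ transparent.
\item For $m=0$ you construct the Brownian coupling by hand, so the error has bounded variance and Borell--TIS plus Borel--Cantelli suffice.
\end{enumerate}
The paper's route is shorter and uniform in $(m,\gamma)$ and in both processes, at the price of an external strong invariance principle. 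Yours is self-contained and gives an explicit Brownian motion; for $W_\lambda$ it is $\int_1^{e^S}v^{-1/2}\,dB(v)$. For $H\neq 1/2$, however, you must write $e^{-Hu}B_H(e^u)$ via Mandelbrot--Van Ness as a sum of two independent moving averages, one from $s>0$ and one from $s<0$, not as a single $\int k(u-v)\,dB(v)$. You must also check that both kernels are integrable with finite first moment. This holds here: they behave at worst like $x^{H-1/2}$ near $0$ and decay exponentially.

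\textbf{Three corrections.}
\begin{enumerate}
\item Your self-similarity index is wrong. $X=J_{m,\bm\alpha}(I_\gamma B_H)$ is $\theta$-self-similar, since each $J_{\alpha_i}$ raises the index by $1-\alpha_i$. The index $H+\gamma+m$ belongs to $t^{\alpha_1+\cdots+\alpha_m}X(t)$. So you need $Y(u)=e^{-\theta u}X(e^u)$; with your exponent neither stationarity nor your displayed identity holds when $\sum\alpha_i\neq0$.
\item For $W_\lambda$ with $m=0$ the weight is $s^{-\lambda-1}$, not $s^{-\lambda-3/2}$. The Fubini coefficient is then $\Gamma(\lambda+1/2)^{-1}\int_1^\infty(u-1)^{\lambda-1/2}u^{-\lambda-1}\,du=\beta(1/2,\lambda+1/2)/\Gamma(\lambda+1/2)=\Gamma(1/2)/\Gamma(\lambda+1)$. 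With $u^{-\lambda-3/2}$ the integral equals $1/(\lambda+1/2)$, and $\int_1^tW_\lambda(s)s^{-\lambda-3/2}\,ds$ would converge.
\item The constant for $I_\gamma B_H$ is asserted rather than derived. You need $\int_{\mathbb R}r_{0,0,0}=2\beta(2H,1-H)$: integrate $e^{Hh}[1-(1-e^{-h})^{2H}]$ by parts and substitute $u=e^{-h}$. The Gamma ratio then follows, in your pathwise spirit, from Fubini:
\[
\int_1^ts^{-H-\gamma-1}I_\gamma B_H(s)\,ds=\frac{\Gamma(H+1)}{\Gamma(H+\gamma+1)}\int_1^tv^{-H-1}B_H(v)\,dv+R(t).
\]
Here $R(e^S)$ is an a.s.\ bounded term plus a convolution of $e^{-Hx}B_H(e^x)$ with an integrable kernel, so it has bounded variance.
\end{enumerate}
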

 Theorem \ref{th:1:3} will be proved in Section \ref{sect:th:4} by an almost sure invariance principle of Shao \cite{Shao1985}. 

  The following corollary summarizes Chung-type laws of the iterated logarithm of $\int_1^t s^{-\alpha} B_H(s) ds$. 
\begin{corollary} We have that
\begin{align*}
 \liminf_{T\to \infty} & \frac{(\log\log T)^{H+1}}{T^{H+1-\alpha}}\sup_{1\le t\le T}\left|\int_1^t \frac{B_H(s)}{s^{\alpha}}ds\right|= a_H\left( \frac{\kappa_{H+1}}{ 1- \alpha/(H+1)}\right)^{H+1} a.s.,\;\alpha<H+1,\\
   \liminf_{T\to \infty} & \sqrt{\frac{\log\log\log T}{\log T}}\sup_{1\le t\le T}\left|\int_1^t\frac{B_H(s)}{s^{\alpha}}ds\right|=\frac{\pi}{2}\sqrt{\beta(2H,1-H)}\;\; a.s.,\;\alpha=H+1.
\end{align*}
\end{corollary}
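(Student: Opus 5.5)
Both limits are special cases of Theorems \ref{th:1:1} and \ref{th:1:3}. The only extra work is to replace $\int_0^t$ by $\int_1^t$ in the first case, and to check that the constants collapse in the second. Throughout, $I_\gamma$ with $\gamma=0$ is read as the identity $I_0$.

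\emph{Case $\alpha<H+1$.} I will apply \eqref{eq:th:1:1:2} with $m=1$, $\gamma=0$, $\alpha_1=\alpha$, and normalising exponent equal to $\alpha$ as well. Then $t^{\alpha-\alpha_1}=1$, and the admissibility condition $\alpha_1<H+\gamma+1=H+1$ holds. This gives
$$\liminf_{T\to\infty}\frac{(\log\log T)^{H+1}}{T^{H+1-\alpha}}\sup_{0\le t\le T}\left|\int_0^t s^{-\alpha}B_H(s)\,ds\right|=a_H\left(\frac{\kappa_{H+1}}{1-\alpha/(H+1)}\right)^{H+1}\;\; a.s.$$
To pass to $\int_1^t$, set $C=\int_0^1 s^{-\alpha}B_H(s)\,ds$. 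This is an a.s. finite random variable, because $|B_H(s)|\le K s^{H-\delta}$ near $0$ for small $\delta>0$ and $\alpha<H+1$. Write $M_1=\sup_{0\le t\le 1}|\int_0^t s^{-\alpha}B_H(s)\,ds|<\infty$. For $T\ge1$, $\int_0^t=C+\int_1^t$ on $[1,T]$ gives
$$\Big|\sup_{0\le t\le T}\Big|\int_0^t\Big|-\sup_{1\le t\le T}\Big|\int_1^t\Big|\Big|\le M_1+|C| .$$
Since $T^{H+1-\alpha}/(\log\log T)^{H+1}\to\infty$, the a.s. finite error $M_1+|C|$ is negligible after normalisation. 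Hence the two liminfs coincide, which proves the first display.

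\emph{Case $\alpha=H+1$.} I will apply \eqref{eq:th:1:3:1} with $m=0$ and $\gamma=0$. Then $J_{0,\bm\alpha}(I_0B_H)=B_H$, and the integrand becomes $B_H(s)/s^{H+0+0+1}=B_H(s)/s^{H+1}$, which is exactly $B_H(s)/s^{\alpha}$. The product over $i=1,\dots,m$ is empty and equals $1$. The Gamma ratio is $\Gamma(H+1)/\Gamma(0+1+H)=1$. The right-hand side therefore reduces to $\frac{\pi}{2}\sqrt{\beta(2H,1-H)}$, which is the second display.

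As a consistency check, for $H=1/2$ we have $\beta(1,1/2)=2$, so the constant is $\pi/\sqrt2$, matching the Brownian computation preceding Theorem \ref{th:1:3}. The only point needing care is the negligibility argument in the first case, and it is immediate once $C$ is known to be finite.
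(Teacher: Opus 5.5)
Your proposal is correct and follows the paper's route: the paper presents this corollary as a direct specialization of Theorem~\ref{th:1:1} (with $m=1$, $\gamma=0$, $\alpha_1=\alpha$) and Theorem~\ref{th:1:3} (with $m=0$, $\gamma=0$), and your constants reduce correctly. The paper leaves the replacement of $\int_0^t$ by $\int_1^t$ implicit, and your bound by $M_1+|C|$, which vanishes after normalisation because $H+1-\alpha>0$, fills that step correctly.
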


  \section{Laws of the iterated logarithm}
  \label{sect:th:4}
\setcounter{equation}{0}

 \eqref{eq:th:1:1:2} follows from \eqref{eq:th:1:1:1}.  Next we consider the proofs of  Theorems \ref{th:1:2} and \ref{th:1:3}. We say that a Gaussian process  $  X(t)$ is  self-similar   of index $\tau>0$,  if
$$ \{  X(ct); t\ge 0\}\overset{\mathscr{D}}=\{c^{\tau} X(t); t\ge 0\}, \;\; c>0.  $$
It is easily seen that $W_{\lambda}(t)$ is self-similar of index $\lambda$, and $J_{m,\bm\alpha}(I_{\gamma}B_H)(t)$ and $J_{m,\bm\alpha}(W_{H+\gamma})(t)$ are  self-similar   of index $H+\gamma+m-\alpha_1-\cdots-\alpha_m$. If $X(t)$ is a self-similar Gaussian process of index $\tau>0$, then $\frac{X(e^t)}{e^{\tau t}}$ is a stationary Gaussian process. 
To prove the limit laws of the iterated logarithm, as given in Theorem \ref{th:1:2}, we need a  lemma on the limit behavior of a stationary Gaussian process. 
 
\begin{lemma} (\cite[Theorem 5.4]{Pickans1967}\label{lem:Pickans}) Let $U(t)$ be a centered stationary       Gaussian process with the covariance function
$ r(h)=\ep[U(t)U(t+h)]$. If
$$ \exists \epsilon>0, \;\;\limsup_{|h|\to 0}|h|^{-\epsilon}\{r(0)-r(h)\}<\infty, $$
and $\lim_{|h|\to \infty}r(h)\log |h|=0$, then
$$ \sup_{0\le s\le t}U(s)-\sqrt{2r(0)\log t}\to 0 \;\; a.s. \text{ as } t\to \infty. $$
\end{lemma}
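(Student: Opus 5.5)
We argue only for the version used later, since this is Pickands' theorem and the paper invokes it as a known result. By scaling, assume $r(0)=1$. Put $u_t^{\pm}=\sqrt{2\log t}\pm\delta$ for a fixed small $\delta>0$. The claim splits into two bounds, each to hold almost surely:
$$\limsup_{t\to\infty}\Big(\sup_{0\le s\le t}U(s)-\sqrt{2\log t}\Big)\le 0,\qquad \liminf_{t\to\infty}\Big(\sup_{0\le s\le t}U(s)-\sqrt{2\log t}\Big)\ge 0 .$$
Both are proved along the geometric grid $t_k=2^k$. The passage to general $t$ uses monotonicity of $t\mapsto \sup_{[0,t]}U$ and the fact that $\sqrt{2\log t_{k+1}}-\sqrt{2\log t_k}\to0$.

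\emph{Upper bound.} The condition $r(0)-r(h)=O(|h|^{\epsilon})$ gives $\ep|U(s)-U(s')|^2\le C|s-s'|^{\epsilon}$. Dudley's entropy bound then yields a continuous version and $\ep\sup_{[0,1]}U<\infty$. A chaining argument at scale $u^{-2/\epsilon}$ (Pickands' upper bound) gives
$$\pr\Big(\sup_{0\le s\le 1}U(s)>u\Big)\le C u^{c}e^{-u^2/2}$$
for some $c=c(\epsilon)$. Stationarity and a union bound over the $t$ unit intervals then give
$$\pr\Big(\sup_{[0,t_k]}U>u_{t_k}^{+}\Big)\le C\,t_k (\log t_k)^{c/2}t_k^{-1}e^{-\delta\sqrt{2\log t_k}}=C k^{c/2}e^{-\delta'\sqrt k}.$$
This is summable in $k$, so Borel–Cantelli finishes the upper bound.

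\emph{Lower bound.} Discretise $[0,t_k]$ at points $s_j=j\eta_k$, $j=1,\dots,n_k=t_k/\eta_k$, with spacing $\eta_k=t_k^{\theta}$ for a small $\theta\in(0,1)$. We need $\pr(\max_j U(s_j)\le u_{t_k}^{-})$ to be summable. We compare this with i.i.d.\ standard normals via Berman's normal comparison lemma:
$$\Big|\pr\big(\max_j U(s_j)\le u\big)-\Phi(u)^{n}\Big|\le C\sum_{i<j}|r(s_j-s_i)|\exp\Big(-\frac{u^2}{1+|r(s_j-s_i)|}\Big).$$
The independent term is harmless. One has $\Phi(u)^n\le\exp(-n(1-\Phi(u)))$ and $n(1-\Phi(u))\approx t^{1-\theta}t^{-1}e^{\delta\sqrt{2\log t}}/\sqrt{\log t}$. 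To make this blow up we instead let the gap be $\delta$-dependent and take $\theta$ smaller than the gain, i.e.\ choose $u=\sqrt{2(1-\theta)\log t}$ levels against $\sqrt{2\log t}-\delta$. More cleanly, we use $\eta_k=(\log t_k)^{2}$, so that $n\Psi(u)\ge e^{c\delta\sqrt{\log t_k}}$ and the independent term is $\exp(-e^{c\sqrt k})$.

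\emph{Main obstacle.} The step I expect to be hardest is controlling Berman's error sum so that it is summable in $k$. The sum is split at lag $|s_j-s_i|=t^{\beta}$. For lags beyond $t^{\beta}$, the condition $r(h)\log h\to0$ gives $|r|\log t\to0$, so $e^{-u^2/(1+|r|)}\approx t^{-1}e^{o(1)\cdot\text{stuff}}$. The total is then $o(1)\cdot n^2 t^{-1}(\log t)^{-1}$ up to $e^{O(\delta\sqrt{\log t})}$ factors, and these must be balanced against $n=t/(\log t)^2$. For short lags we need $\sup_{h\ge \eta}|r(h)|\le\rho<1$, which follows from $r(h)\to0$ and nondegeneracy. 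This makes that part $\le n t^{\beta}t^{-2/(1+\rho)}$, which is polynomially small for small $\beta$. Getting genuine summability in $k$, and not just convergence to $0$, may require replacing $o(1)$ by a rate. If the raw Berman bound is too weak, I would first try a blocking argument instead: use sparse blocks separated by gaps of length $t^{\beta}$, and apply Slepian/Berman only across blocks. Failing that, I would upgrade the conclusion along the sparser sequence $t_k=e^{k^{1/2}}$, whose increments in $\sqrt{2\log t}$ still vanish.
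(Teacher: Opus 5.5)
\textbf{The lower bound is not proved, and the tool you chose cannot prove it under the stated hypothesis.}

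The paper gives no proof of this lemma; it simply cites Pickands. Your upper-bound half is sound. The tail estimate $\pr(\sup_{[0,1]}U>u)\le Cu^{2/\epsilon}\Psi(u)$, with $\Psi=1-\Phi$, combined with a union bound over unit intervals, Borel--Cantelli along $t_k=2^k$, and monotonicity, gives $\limsup_t(\sup_{[0,t]}U-\sqrt{2\log t})\le 0$.

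The lower bound is where the proposal breaks. Berman's lemma controls $|\pr(\max_jU(s_j)\le u)-\Phi(u)^n|$ only additively, but at $u=\sqrt{2\log t}-\delta$ you need $n\Psi(u)\to\infty$.
\begin{itemize}
\item For long lags, $e^{-u^2/(1+|r|)}\approx e^{-u^2}=t^{-2}e^{2\sqrt{2}\,\delta\sqrt{\log t}-\delta^2}$, not $t^{-1}$ as you wrote.
\item Hence the long-lag part of the Berman sum is of order $\epsilon_t\,(n\Psi(u))^2$, where $\epsilon_t=\sup_{h\ge t^{\beta}}|r(h)|\log h$.
\item With $n=t/(\log t)^2$ this is about $\epsilon_t(\log t)^{-5}e^{2\sqrt{2}\,\delta\sqrt{\log t}}$. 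That already diverges for $r(h)\asymp(\log h)^{-2}$.
\item Thinning the grid so that $n\Psi(u)=\tau$ gives at best $e^{-\tau}+C\epsilon_t\tau^2$. No rate for $r(h)\log h\to0$ is assumed, so this need not be summable along any sequence with $\sqrt{\log t_{k+1}}-\sqrt{\log t_k}\to0$.
\end{itemize}
Your fallbacks do not repair this. The sequence $t_k=e^{k^{1/2}}$ is denser than $2^k$, not sparser, so summability gets harder. Slepian against independent blocks goes the wrong way as soon as cross-block correlations are positive.

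\textbf{The missing idea} is a one-sided Slepian comparison with a process that carries a small common factor.
\begin{itemize}
\item Inside $[0,t]$, take $N\asymp t^{1-\beta}$ blocks of length $t^{\beta}$, separated by gaps of length $t^{\beta}$.
\item In each block use grid points of fixed spacing $\eta$, chosen so that $\sup_{h\ge\eta}|r(h)|\le\rho_0<1$.
\item Put $\rho_t=\sup_{h\ge t^{\beta}}|r(h)|\le\epsilon_t/(\beta\log t)$.
\item Set $U'(s)=\sqrt{1-\rho_t}\,V(s)+\sqrt{\rho_t}\,Z$. Here $V$ is an independent copy of $U$ on each block, and $Z\sim N(0,1)$ is independent of everything.
\end{itemize}
Then $U'$ has unit variances, and its covariances dominate those of $U$ entrywise: $(1-\rho_t)r+\rho_t\ge r$ inside a block, and $\rho_t\ge r$ across blocks. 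Slepian's inequality therefore gives $\pr(\max_SU\le u)\le\ep[(1-p(v(Z)))^N]$, where $v(Z)=(u-\sqrt{\rho_t}Z)/\sqrt{1-\rho_t}$ and $p(v)=\pr(\max_{\mathrm{block}}U>v)$. By Bonferroni, $p(v)\ge\frac12(t^{\beta}/\eta)\Psi(v)$ provided $\beta<(1-\rho_0)/(1+\rho_0)$.

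Since $u^2\rho_t\le2\epsilon_t/\beta\to0$, on $\{|Z|\le\sqrt{\log t}\}$ one has $v(Z)^2=u^2+o(\sqrt{\log t})$. Hence $Np(v(Z))\ge e^{\delta\sqrt{\log t}}$ for large $t$. This yields $\pr(\sup_{[0,t]}U\le\sqrt{2\log t}-\delta)\le\exp(-e^{\delta\sqrt{\log t}})+2t^{-1/2}$, which is summable along $t_k=2^k$. This step is exactly where the hypothesis $r(h)\log h\to0$ is used.
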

   
{\bf Proof of Theorem \ref{th:1:2}.}     We first consider   \eqref{eq:th:1:2:1}. Let $\alpha_0=0$. For  $\alpha_1+\cdots+\alpha_i<H+\gamma+i$, $i=1,\ldots, m$,  let 
\begin{equation}\label{eq:proofth1:2:OU1}
\begin{aligned}   X_{0,0}(t)=I_{\gamma}(B_H)(t), &\;\; U_{0,0}(t)=\frac{  X_{0,0}(e^t)}{e^{(H+\gamma)t}},  \\
  X_{m,\bm \alpha}(t)= J_{m,\bm\alpha}(I_{\gamma}B_H)(t), &\;\; U_{m,\bm \alpha}(t)=\frac{  X_{m,\bm \alpha}(e^t)}{e^{(H+\gamma+m-\alpha_1-\cdots-\alpha_m)t}}. 
\end{aligned}
\end{equation}
Then 
$$ \sup_{1\le t\le T}t^{-(H+\gamma+m-\alpha_1-\cdots-\alpha_m)}|X_{m, \bm \alpha}(t)|=\sup_{0\le t\le \log T}|U_{m, \bm \alpha}(t)|. $$
Since $X_{m,\bm \alpha}(t)$ is a centered, self-similar Gaussian process of index $H+\gamma+m-\alpha_1-\cdots-\alpha_m$, $U_{m,\bm\alpha}(t)$ is a stationary Gaussian process with the covariance function
\begin{align}\label{eq:proofth1:2:OU2} r(h)=& r_{m,\bm \alpha,\gamma}(h)=:\ep\left[U_{m,\bm \alpha}(t)U_{m,\bm \alpha}(t+h)\right]\nonumber\\
=& \frac{\ep[X_{m,\bm \alpha}(e^t)X_{m,\bm \alpha}(e^{t+h})]}{e^{(H+\gamma+m-\alpha_1-\cdots-\alpha_m)(2t+h)}}
=\frac{\ep[X_{m,\bm \alpha}(1)X_{m,\bm \alpha}(e^h)]}{e^{(H+\gamma+m-\alpha_1-\cdots-\alpha_m)h}}.
\end{align}
It is sufficient to show that $r(t)$ satisfies the conditions of Lemma \ref{lem:Pickans}. When $\alpha_0=0$, $m=0$, and $\gamma=0$,  $X_{0,0}(t)=B_H(t)$. Thus 
$$ r_{0,0,0}(h)=\frac{1}{2}\left\{e^{Hh}+e^{-Hh}-\Big|e^{h/2}-e^{-h/2}\Big|^{2H}\right\}. $$
It is can be shown  that 
\begin{equation}\label{eq:proofth1:2:ad1} |r_{0,0,0}(t+h)-r_{0,0,0}(t)|\le C_H|h|^{(2H)\wedge 1}, \;\; 0< r_{0,0,0}(h)\le C_H\exp\{-(H\wedge (1-H))|h|\}. 
\end{equation}
\red{In fact,  when $H=1/2$,   $r_{0,0,0}(t)=e^{-|t|/2}$ and \eqref{eq:proofth1:2:ad1} is obvious. In general, without loss of generality, we assume $t\ge 0$. Notice  $0\le 1-(1-x)^{\alpha}\le c_{\alpha} x$, $0\le x\le 1$, if $\alpha\ge 0$. Thus,
\begin{align*} 0<r_{0,0,0}(t)=& \frac{1}{2}e^{-Ht}+\frac{1}{2}e^{Ht}\big(1-(1-e^{-t})^{2H}\big) \\
\le & \frac{1}{2}e^{-Ht}+\frac{1}{2}C_He^{Ht}e^{-t}\le C_H\exp\left\{-(H\wedge(1-H))t\right\}. 
\end{align*} 
Next, we consider $r_{0,0,0}(t+h)-r_{0,0,0}(t)$. Notice $|r_{0,0,0}(t)|\le C_H$. Without loss of generality, we assume $t\ge 0$ and $|h|\le 1$. 
Then 
\begin{align*}
&r_{0,0,0}(t+h)-r_{0,0,0}(t)\\
=& (1-e^{-Hh})\gamma(t+h)+\frac{1}{2}e^{-Ht}(e^{-2Hh}-1) +
\frac{1}{2} e^{Ht}\left(|1-e^{-t}|^{2H}-|1-e^{-t-h}|^{2H}\right).
\end{align*}
Notice, $|r_{0,0,0}(t+h)|\le C_H$, $|1-e^{-Hh}|\le C_H |h|$ for $|h|\le 1$, and
\begin{align*}
& \left||1-e^{-t}|^{2H}-|1-e^{-t-h}|^{2H}\right|\le C_H|e^{-t}-e^{-t-h}|\le C_H e^{-t}|h| \text{ when } 2H\ge 1  \\
& \Big( \text{ since the deviation of } |1-x|^{2H} \text{ is bounded  for } x\in[0,e] \text{ when }    2H\ge 1\Big);\\
& \left||1-e^{-t}|^{2H}-|1-e^{-t-h}|^{2H}\right|\le |e^{-t}-e^{-t-h}|^{2H}\le C_H e^{-2Ht}|h|^{2H}  \text{ when } 2H\le  1\\
&\Big(\text{ since } |x+y|^{2H}\le |x|^{2H}+|y|^{2H} \text{ when } 0\le 2H\le 1\Big). 
\end{align*}
It follows that, when $2H\ge 1$, 
$$|r_{0,0,0}(t+h)-r_{0,0,0}(t)|\le C_H|h|+C_H e^{(H-1)t}|h|\le C_H |h|, $$ 
and, when $2H\le 1$, 
$$|r_{0,0,0}(t+h)-r_{0,0,0}(t)|\le C_H|h|+C_He^{-Ht}|h|^{2H}\le C_H|h|^{2H}. $$
\eqref{eq:proofth1:2:ad1} is proven. }
Thus $r_{0,0,0}(h)$ satisfies the conditions of Lemma \ref{lem:Pickans}.  When  $\alpha_0=0$, $m=0$, and $\gamma>0$,
\begin{align*}
&r_{0,0,\gamma}(h)=\frac{\ep\left[I_{\gamma}(B_H)(1)I_{\gamma}(B_H)(e^h)\right]}{e^{(H+\gamma)h}}\\
=&\frac{e^{-(H+\gamma)h}}{\Gamma^2(\gamma)}\int_0^{e^h}\int_0^1(e^h-u)^{\gamma-1}(1-v)^{\gamma-1}\ep[B_H(v)B_H(u)]dvdu\\
=&\frac{e^{-(H+\gamma)h}}{\Gamma^2(\gamma)}\int_{-\infty}^{0}\int_{-\infty}^0(e^h-e^{u+h})^{\gamma-1}(1-e^v)^{\gamma-1}\ep[B_H(e^v)B_H(e^{u+h})]e^v e^{u+h}dvdu\\
=&\frac{1}{\Gamma^2(\gamma)}\int_{-\infty}^{0}\int_{-\infty}^0 r_{0,0,0}(h+u-v)(1-e^u)^{\gamma-1}e^{u(H+1)}(1-e^v)^{\gamma-1}e^{v(H+1)} dvdu.
\end{align*}
It follows that
\begin{align*}
|r_{0,0,\gamma}(t+h)-r_{0,0,\gamma}(t)|\le & C_H|h|^{(2H)\wedge 1}\frac{1}{\Gamma^2(\gamma)}\left(\int_{-\infty}^{0}(1-e^u)^{\gamma-1}e^{u(H+1)}du\right)^2\\
\le &  C_H\frac{\beta^2(\gamma,H+1)}{\Gamma^2(\gamma)}|h|^{(2H)\wedge 1}
\end{align*}
and
\begin{align*}
&0< r_{0,0,\gamma}(h)\\
\le &C_H\frac{1}{\Gamma^2(\gamma)}\int_{-\infty}^{0}\int_{-\infty}^0 e^{-(H\wedge (1-H))(|h|-|v|-|u|)}(1-e^u)^{\gamma-1}e^{u(H+1)}(1-e^v)^{\gamma-1}e^{v(H+1)} dvdu\\
\le & C_H\frac{1}{\Gamma^2(\gamma+1)} e^{-(H\wedge (1-H))|h|}.
\end{align*}
Thus $r_{0,0,\gamma}(h)$ satisfies the conditions of Lemma \ref{lem:Pickans}, and
\begin{align*}
&\sigma^2_{B,0, 0,\gamma}=r_{0,0,\gamma}(0)\\
=& \frac{1}{\Gamma^2(\gamma)}\int_{-\infty}^{0}\int_{-\infty}^0 r_{0,0,0}(u-v)(1-e^u)^{\gamma-1}e^{u(H+1)}(1-e^v)^{\gamma-1}e^{v(H+1)} dvdu\\
=&\frac{1}{\Gamma^2(\gamma)}\int_0^1\int_0^1x^{2H+1}[(1-x)(1-xy)]^{\gamma-1}\left[1+y^{2H}-(1-y)^{2H}\right]dxdy.
\end{align*}

For $m\ge 1$, it is easily verified that
$$U_{m,\bm \alpha}(t)=\int_{-\infty}^0 U_{m-1, \bm \alpha_{m-1}}(u+t)e^{(H+\gamma+m-\alpha_1-\cdots-\alpha_m)u}du, $$
\begin{equation}\label{eq:proofth:4:1} r_{m,\bm \alpha,\gamma}(h)=\int_{-\infty}^0\int_{-\infty}^0 r_{m-1, \bm \alpha_{m-1},\gamma}(h+u-v)e^{(H+\gamma+m-\alpha_1-\cdots-\alpha_m)(u+v)} dudv,
\end{equation}
where $\bm\alpha=\bm\alpha_m=(\alpha_1,\ldots,\alpha_{m})$ and $\bm\alpha_{m-1}=(\alpha_1,\ldots,\alpha_{m-1})$. By the induction, we can find positive constants $C_{m, \bm\alpha,\gamma}$,  $c_{m,\bm \alpha,\gamma}$ and $\epsilon=(2H)\wedge 1$ such that
\begin{equation}\label{eq:proofth:4:2}
\begin{aligned} & |r_{m,\bm \alpha,\gamma}(t+h)-r_{m,\bm \alpha,\gamma}(t)|\le    C_{m,\bm \alpha,\gamma}|h|^{\epsilon}, \\
& 0< r_{m,\bm \alpha,\gamma}(h)\le  C_{m,\bm \alpha,\gamma}\exp\{- c_{m,\bm \alpha,\gamma}|h|\}.
 \end{aligned}
\end{equation}
 Thus, $r_{m,\bm \alpha}$ satisfies the conditions of Lemma \ref{lem:Pickans}. 
 
 For  \eqref{eq:th:1:2:3}, with the same arguments of the proof of \eqref{eq:th:1:2:1}, it is sufficient to show that \eqref{eq:proofth:4:2} is satisfied in  the case of $m=0$. Let 
\begin{equation} \label{eq:proofth1:2:Ulambda1} U_{\lambda}(t)=\frac{W_{\lambda}(e^t)}{e^{\lambda t}}. 
\end{equation}
Then
$$ \sup_{1\le t\le T}t^{-\lambda} |W_{\lambda}(t)|=\sup_{0\le t\le \log T}|U_{\lambda}(t)|, $$
and $U_{\lambda}(t)$ is a   stationary Gaussian process with the covariance function
\begin{align} \label{eq:proofth1:2:Ulambda2}   r_{\lambda}(h)= & \ep\left[U_{\lambda}(t)U_{\lambda}(t+h)\right]
=\frac{\ep[W_{\lambda}(e^t)W_{\lambda}(e^{t+h})]}{e^{\lambda (2t+ h)}}
=\frac{\ep[W_{\lambda}(1)W_{\lambda}(e^h)]}{e^{\lambda h}} \nonumber\\
=&\frac{1}{\Gamma^2(\lambda+1/2)}e^{-\lambda h}\int_0^1(e^h-x)^{\lambda-1/2}(1-x)^{\lambda-1/2}dx, \;\; h\ge 0.
\end{align}
When $\lambda= 1/2$, $r_{\lambda}(h)=e^{-h/2}$, $h\ge 0$. When $\lambda>1/2$ and $h\ge 0$,
$$r_{\lambda}(h)\le \frac{1}{\Gamma^2(\lambda+1/2)}e^{-\lambda h}\int_0^1e^{(\lambda-1/2)h}(1-x)^{\lambda-1/2}dx=e^{-h/2}C_{\lambda},
$$
$$r_{\lambda}(h)\ge \frac{1}{\Gamma^2(\lambda+1/2)}e^{-\lambda h}\int_0^1(1-x)^{(2\lambda-1)}dx=e^{-\lambda h}r_{\lambda}(0).
$$
When $\lambda<1/2$ and $h\ge 0$,
$$r_{\lambda}(h)\le \frac{1}{\Gamma^2(\lambda+1/2)}e^{-\lambda h}\int_0^1(1-x)^{(2\lambda-1)}dx=e^{-\lambda h}r_{\lambda}(0).
$$
\begin{align*}
r_{\lambda}(h)\ge &   \frac{1}{\Gamma^2(\lambda+1/2)}e^{-\lambda h}\int_0^1(e^h-x)^{(2\lambda-1)}dx \\
= & r_{\lambda}(0)e^{-\lambda h}\left(e^{2\lambda h}-(e^h-1)^{2\lambda}\right)
\ge r_{\lambda}(0) \left(1-(1-e^{-h})^{2\lambda}\right).
\end{align*}
It follows that $0\le r_{\lambda}(h)\le C_{\lambda} e^{-(\lambda\wedge \frac{1}{2}) |h|}$ and $0\le r_{\lambda}(0)-r_{\lambda}(h)
\le C_{\lambda} |h|^{(2\lambda)\wedge 1}$. Furthermore,
\begin{align*}
&\big|r_{\lambda}(t+h)-r_{\lambda}(t)\big|=\Big|\ep\big[U_{\lambda}(0)[U_{\lambda}(t+h)-U_{\lambda}(t)]\big]\Big|\\
\le & \Big(\ep U_{\lambda}^2(0)\cdot \ep\big(U_{\lambda}(t+h)-U_{\lambda}(t)\big)^2 \Big)^{1/2}
=\Big(r_{\lambda}(0)\cdot 2\big(r_{\lambda}(0)-r_{\lambda}(h)\big)  \Big)^{1/2}\le  C_{\lambda} |h|^{\lambda\wedge (1/2)}.
\end{align*}
Thus, $r_{\lambda}(t)$ satisfies \eqref{eq:proofth:4:2} and the conditions of Lemma \ref{lem:Pickans}. Hence, 
 \eqref{eq:th:1:2:3} holds with
 \begin{align*}
 &\sigma_{W,0,0,\lambda}^2=\Var\{W_{\lambda}(1)\}=r_{\lambda}(0)\\
 =&\frac{1}{\Gamma^2(\lambda+1/2)}e^{-\lambda h}\int_0^1(1-x)^{2\lambda-1}dx=\frac{1}{2\lambda\Gamma^2(\lambda+1/2)}.
 \end{align*}
The proof of Theorem \ref{th:1:2} is completed.  $\Box$

 Notice that
$ I_{\gamma}(B_H)(t)=a_HW_{H+\gamma}(t)+a_HI_{\gamma}(Z_H)(t), $
and that $W_{H+\gamma}(t)$ and $Z_H(t)$ are independent. Thus
$$\Var\{J_{m,\bm\alpha}(I_{\gamma}B_H)(t)\}=a_H^2\Var\{J_{m,\bm\alpha}(W_{H+\gamma})(t)\} + a_H^2\Var\{J_{m,\bm\alpha}(I_{\gamma}Z_H)(t)\}. $$
Thus, the conclusions in Remark \ref{rk:1} follow. Finally, we consider  Theorem \ref{th:1:3}. 

{\bf Proof of Theorem \ref{th:1:3}.} Let $U_{m,\bm \alpha}(t)$ and $r_{m,\bm \alpha,\gamma}(h)$ be defined as \eqref{eq:proofth1:2:OU1} and \eqref{eq:proofth1:2:OU2}, respectively. Notice that
$$\int_1^t s^{-(H+\gamma+m+1-\alpha_1-\cdots-\alpha_m)}J_{m,\bm\alpha}(I_{\gamma}B_H)(s)ds=\int_0^{\log t}U_{m,\bm \alpha}(x)dx. $$
Write $V(t)=\int_0^tU_{m,\bm \alpha}(x)dx$ and $\psi(h)=\int_0^hr_{m,\bm \alpha,\gamma}(s)ds$. Then by \eqref{eq:proofth:4:1} and \eqref{eq:proofth:4:2},
\begin{align*} &\ep\left[V(t)V(t+h)\right]= \int_0^{t+h}d u \int_0^t r_{m,\bm \alpha,\gamma}(|u-v|)dv\\
=&2\int_0^t\int_0^u r_{m,\bm \alpha,\gamma}(u-v)dvdu+\int_t^{t+h}du \int_0^t r_{m,\bm \alpha,\gamma}(u-v)dv\\
=&2\int_0^t\psi(u)du+\int_0^h\big[\psi(u+t)-\psi(u)\big]du\\
=&2t\int_0^{\infty}r_{m,\bm \alpha,\gamma}(v)dv +O(1) \text{ as } t\to \infty \text{ uniformly in } h\ge 0.   
\end{align*}
Thus, it is expected that $V(t)$ behaves as a Brownian motion. Actually, we let
$$ X_n=V(n)-V(n-1)=\int_{n-1}^n U_{m,\bm \alpha}(x)dx, \;\; n=1,2,\ldots. $$
Then $\{X_n;n\ge 1\}$ is a sequence of stationary Gaussian random variables with 
$$ \gamma(n)=|\ep X_{k+n} X_k|=\left|\int_{n}^{n+1}\int_{0}^1 r_{m,\bm \alpha}(u-v)dv du\right|\le Ce^{-cn} $$
by \eqref{eq:proofth:4:2} and the fact that $\{U_{m,\bm \alpha}(t);t\ge 0\}$ is a stationary Gaussian process.  Applying an almost sure invariance principle of Shao \cite{Shao1985} (c.f. Corollary 14.2.1 of Lin and Lu \cite{LinLu1997}), we can find  a standard Browian motion $W(t)$ such that
$$ V(n)-\widetilde{\sigma} W(n)=O\big(\log^{1/2} n\big)\; a.s., $$
where 
$$\widetilde{\sigma}^2=\widetilde{\sigma}^2_{m,\bm \alpha,\gamma,H}=\ep X_1^2+2\sum_{k=2}^{\infty} \ep X_1X_k=\lim_{n\to \infty} \frac{\ep V^2(n)}{n}=2\int_0^{\infty}r_{m,\bm \alpha,\gamma}(v)dv. $$
It can be checked that
$$ \sup_{n\le t\le n+1}|V(t)-V(n)|=O\big(\log^{1/2} n\big)\; a.s. \;\text{ and }  \sup_{n\le t\le n+1}|W(t)-W(n)|=O\big(\log^{1/2} n\big)\; a.s. $$
It follows that
$$ V(t)-\widetilde{\sigma} W(t)=O\big(\log^{1/2} t\big)\; a.s. \text{ as } t\to \infty. $$
Hence, 
\begin{align*}
   &\liminf_{T\to \infty}  \sqrt{\frac{\log\log\log T}{\log T}} \sup_{1\le t\le T}\left|\int_1^t  \frac{J_{\bm\alpha, m}(B_H)(s)}{s^{H+m+1-\alpha_1-\cdots-\alpha_m}}ds\right|\\
   =& \liminf_{S\to \infty}   \sqrt{\frac{\log\log S}{S}}  \sup_{0\le t\le S}|V(t)| 
   = \widetilde{\sigma}\liminf_{S\to \infty}  \sqrt{\frac{\log\log S}{S}}  \sup_{0\le t\le S}|W(t)| 
   = \frac{\pi}{\sqrt{8}}\widetilde{\sigma}\;\; a.s.
\end{align*} 
When $m=0$ and $\gamma=0$, 
\begin{align*}
\widetilde{\sigma}^2=&\widetilde{\sigma}^2_{m,\bm \alpha,\gamma,H}=\widetilde{\sigma}^2_{0,0,0,H}=2\int_0^{\infty}r_{ 0,0,0}(v)dv=\frac{1}{H}+ \int_0^{\infty}\left[1-(1-e^{-h})^{2H}\right]e^{Hh}dh\\
=&\frac{1}{H}+\frac{1}{H}\int_0^{\infty}\left[1-(1-e^{-h})^{2H}\right] d e^{Hh}
=2\int_0^{\infty}e^{Hh}(1-e^{-h})^{2H-1}e^{-h}dh\\
=& 2\int_0^1 u^{-H}(1-u)^{2H-1}du=2\beta(2H,1-H).
\end{align*}  
When $m=0$ and $\gamma>0$, 
\begin{align*}
&\widetilde{\sigma}^2_{m,\bm \alpha,\gamma, H}= \widetilde{\sigma}^2_{0,0,\gamma, H}=2\int_0^{\infty}r_{0,0,\gamma}(h)dh=\int_{-\infty}^{\infty}r_{0,0,\gamma}(h)dh\\
=&\frac{1}{\Gamma^2(\gamma)}\int_{-\infty}^{0}\int_{\infty}^0\left[\int_{-\infty}^{\infty} r_{0,0,0}(t+h-s)dh\right](1-e^t)^{\gamma-1}e^{t(H+1)}(1-e^s)^{\gamma-1}e^{s(H+1)} dsdt\\
=& \widetilde{\sigma}^2_{0,0,0, H}\frac{\beta^2(\gamma,H+1)}{\Gamma^2(\gamma)}=2\beta(2H,1-H)\frac{\Gamma^2(H+1)}{\Gamma^2(H+1+\gamma)}.
\end{align*}
When $m\ge 1$, by \eqref{eq:proofth:4:1}, it follows that
\begin{align*}
&\widetilde{\sigma}^2_{m,\bm \alpha,\gamma,H}= 2\int_0^{\infty}r_{m,\bm \alpha,\gamma}(h)dh=\int_{-\infty} ^{\infty}r_{m,\bm \alpha,\gamma}(h)dh\\
 =&\int_{-\infty}^0\int_{-\infty}^0 \left[\int_{-\infty}^{\infty}r_{m-1, \bm \alpha_{m-1},\gamma}(h+u-v)dh\right]e^{(H+m+\gamma-\alpha_1-\cdots-\alpha_m)(u+v)}  dudv\\
 =&\int_{-\infty}^0\int_{-\infty}^0 \widetilde{\sigma}^2_{m-1, \bm\alpha_{m-1},\gamma, H} e^{(H+m+\gamma-\alpha_1-\cdots-\alpha_m)(u+v)} dudv \\
 =&  \frac{\widetilde{\sigma}^2_{m-1,\bm\alpha_{m-1}, H}}{\big(H+m+\gamma-\alpha_1-\cdots-\alpha_m\big)^2} 
 =\cdots=\frac{\widetilde{\sigma}^2_{0,0,\gamma,H}}{\prod_{i=1}^m\big(H+\gamma+i-\alpha_1-\cdots-\alpha_i\big)^2}\\
 =&\frac{2\beta(2H,1-H)}{\prod_{i=1}^m\big(H+\gamma+i-\alpha_1-\cdots-\alpha_i\big)^2}\frac{\Gamma^2(H+1)}{\Gamma^2(H+1+\gamma)}.
\end{align*}
In particular, if $H=1/2$, $\bm \alpha=\bm0$ and $\gamma=0$, then $J_{m,\bm \alpha}(I_{\gamma}B_H)=I_{m}(B)$ and 
\begin{align*}
\widetilde{\sigma}^2_{m,\bm \alpha,\gamma,H}=\frac{4}{\prod_{i=1}^m\big(\frac{1}{2}+i\big)^2}
= \left(\frac{2^{m+1}}{(2m+1)!!}\right)^2. 
\end{align*}
The proof 
\eqref{eq:th:1:3:1} is completed. 

The proof of \eqref{eq:th:1:3:2} is similar. It is sufficient to notice that, for $m=0$,  
\begin{align*}
  \widetilde{\sigma}^2=&2\int_0^{\infty}r_{\lambda}(v)dv 
=\frac{2}{\Gamma^2(\lambda+1/2}\int_0^1(1-x)^{\lambda-1/2}\int_0^{\infty}  e^{-v/2}(1-xe^{-v})^{\lambda-1/2}dvdx\\
=&\frac{2}{\Gamma^2(\lambda+1/2)}\int_0^1x^{-1/2}(1-x)^{\lambda-1/2}\int_0^x  u^{-1/2} (1-u)^{\lambda-1/2}dudx\\
=&\frac{2}{\Gamma^2(\lambda+1/2)}\frac{1}{2}\left(\int_0^1x^{-1/2}(1-x)^{\lambda-1/2}dx\right)^2
=\frac{\beta^2(1/2,\lambda+1/2)}{ \Gamma^2(\lambda+1/2)} 
= \frac{\Gamma^2(1/2)}{\Gamma^2(\lambda+1)},
\end{align*}
where   $r_{\lambda}(h)$ is defined as \eqref{eq:proofth1:2:Ulambda2}.
The proof is completed. $\Box$

\section{Small ball probabilities}
\label{sect:th:2}
\setcounter{equation}{0}

In this section, we study  the   small ball probabilities of a weighted integrated fractional process.  In the sequel,  for two  function $f(\epsilon)$ and $g(\epsilon)$, we denote the notations $f(\epsilon)\preccurlyeq g(\epsilon)$ if $\limsup_{\epsilon\to 0}f(\epsilon)/g(\epsilon)$ is bounded,  
   $f(\epsilon)\prec g(\epsilon)$ if $\limsup_{\epsilon\to 0}f(\epsilon)/g(\epsilon)\le 1$, $f(\epsilon)\approx g(\epsilon)$ if both $f(\epsilon)\preccurlyeq g(\epsilon)$ and $g(\epsilon)\preccurlyeq f(\epsilon)$, $f(\epsilon)\sim g(\epsilon)$ if $\lim_{\epsilon\to 0}f(\epsilon)/g(\epsilon)=1$, and $f(\epsilon)\ll g(\epsilon)$ if $\limsup_{\epsilon\to 0}f(\epsilon)/g(\epsilon)\le 0$.

\subsection{Small ball probabilities under the $t^{-\alpha}$-weighted sup-norm}
To establish the small probabilities under $t^{-\alpha}$-weighted sup-norm given in \eqref{eq:th:1:1:1}, we first consider the special cases of $m=0$. 

\begin{proposition}\label{prop:1}
\begin{itemize}
  \item[(i)] For all $\gamma>0$ and $\alpha<H+\gamma$, 
\begin{align} \label{eq:lem:smallballofFB:1}
 \lim_{\epsilon\to 0} \epsilon^{1/(H+\gamma)} \log \pr\left(\sup_{0\le  t\le 1}t^{-\alpha}\left| I_{\gamma}(B_H)(t) \right|<a_H \epsilon\right) 
 =-\frac{\kappa_{H+\gamma}}{1-\alpha/(H+\gamma)}. 
\end{align}
  \item[(ii)]  Let  $\lambda>0$, and $W_{\lambda}(t)$ be defined as \eqref{eqdefW}.  Then for $\alpha<\lambda$, 
\begin{align}\label{eq:smallballofW}
\lim_{\epsilon\to 0}\epsilon^{1/\lambda} \log \pr\left(\sup_{0\le  t\le 1}\left|W_{\lambda}(t)\right|/t^{\alpha}< \epsilon\right)
=-\frac{\kappa_{\lambda}}{1-\alpha/\lambda}.
\end{align}
\end{itemize}
\end{proposition}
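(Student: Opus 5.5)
I would first prove part (ii), then deduce part (i) from it through the decomposition $I_\gamma(B_H)=a_H\big(W_{H+\gamma}+I_\gamma(Z_H)\big)$.

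\textbf{Step 1: reduction to independent blocks.} For (ii) I use self-similarity of index $\lambda$ and a geometric partition of $(0,1]$. Fix $q\in(0,1)$ and the blocks $D_k=[q^{k+1},q^k]$, $k\ge 0$. Since $t^{-\alpha}\approx q^{-k\alpha}$ on $D_k$, the event $\{\sup_{0\le t\le 1}|W_\lambda(t)|t^{-\alpha}<\epsilon\}$ is essentially the intersection over $k$ of the events $\{\sup_{D_k}|W_\lambda|<\epsilon q^{k\alpha}\}$. The blocks are not independent, so I split $W_\lambda$ on $[q^{k+1},q^k]$ as $W_\lambda(t)=\int_{q^{k+1}}^t(t-s)^{\lambda-1/2}dB(s)/\Gamma(\lambda+1/2)+R_k(t)$. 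The first pieces are independent across $k$ and each is a shifted copy of $W_\lambda$ on an interval of length $\ell_k=q^k(1-q)$. The remainders $R_k$ are smooth on the interior of their blocks.

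\textbf{Step 2: summing the block exponents.} By \eqref{eq:constant:1} and scaling, the log small-ball cost of block $k$ at level $\epsilon q^{k\alpha}$ is about $-\kappa_\lambda\,\ell_k\,(\epsilon q^{k\alpha})^{-1/\lambda}$. Summing over those $k$ with $\epsilon q^{k\alpha}\lesssim q^{k\lambda}$ (for smaller $k$ the constraint is active; for larger $k$ the ball is trivially satisfied) gives
\[
\kappa_\lambda\epsilon^{-1/\lambda}(1-q)\sum_k q^{k(1-\alpha/\lambda)}\;\longrightarrow\;\kappa_\lambda\epsilon^{-1/\lambda}\int_0^1 u^{-\alpha/\lambda}\,du=\frac{\kappa_\lambda\epsilon^{-1/\lambda}}{1-\alpha/\lambda}
\]
as $q\to1$, which is the target constant. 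A cleaner way to organise the same computation is to invoke the Lifshits--Linde result \cite{LifshitsLinde2005} for weighted sup-norms of self-similar processes, which gives the form $\kappa\int_0^1 w(t)^{1/\lambda}\,dt$ with $w(t)=t^{-\alpha}$. I would use that as the model.

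\textbf{Step 3: two-sided bounds.} For the lower bound I apply Anderson's inequality and the weak correlation inequality, or Khatri--Šidák for symmetric slabs, to decouple the blocks. The remainders $R_k$ form a smoother Gaussian process, so their small-ball cost is of lower order than $\epsilon^{-1/\lambda}$; this follows from entropy estimates, since $R_k$ is $C^\infty$ away from the block endpoints. For the upper bound I condition on $\mathcal F_{q^{k+1}}$ and use Anderson's inequality: the conditional law on each block is a shift of the independent part, and a shift only decreases the probability of a symmetric convex set. Multiplying the conditional probabilities then gives the product bound. I expect the main obstacle to be this bookkeeping near $t=0$ and near the block endpoints, since the error must be uniform in $k$ and the tail of blocks must be handled. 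For $\alpha<0$ no blocks near $0$ are trivial, but the geometric sum still converges because $1-\alpha/\lambda>0$. I would truncate at $t\ge\delta$ and let $\delta\to0$ afterwards.

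\textbf{Step 4: transfer to part (i).} Here $W_{H+\gamma}$ and $I_\gamma(Z_H)$ are independent and $I_\gamma(Z_H)$ is smooth on $(0,1]$ with better small-ball exponent. By Anderson's inequality, $\pr(\|a_H(W+Z')\|<a_H\epsilon)\le\pr(\|W\|<\epsilon)$, which gives the upper bound. For the lower bound I use $\pr(\|W+Z'\|<\epsilon)\ge\pr(\|W\|<(1-\delta)\epsilon)\,\pr(\|Z'\|<\delta\epsilon)$, which follows from independence and the weighted norm being a norm. It then suffices to show $\log\pr(\sup_t t^{-\alpha}|I_\gamma Z_H(t)|<\delta\epsilon)=o(\epsilon^{-1/(H+\gamma)})$. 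This holds because $Z_H$ is infinitely differentiable on $(0,\infty)$ and behaves like $t^{H}$ in scale near $0$, with entropy numbers decaying faster; Li and Linde \cite{LiLinde1998b} showed this for $\alpha=0$, and the weight is handled by the same block decomposition.
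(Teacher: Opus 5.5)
Your route differs from the paper's. The paper splits only at one point $\delta$. It uses $\sup_{[0,\delta]}t^{-\alpha}|W_\lambda|\overset{d}{=}\delta^{\lambda-\alpha}\sup_{[0,1]}t^{-\alpha}|W_\lambda|$ to get the fixed-point inequality $L\ge\delta^{1-\alpha/\lambda}L-\kappa_\lambda(1-\delta)\delta^{-|\alpha|/\lambda}$, where $L$ is the $\liminf$, finite a priori by Lemma \ref{lem:selfsimilar}(i). A mirror inequality holds for the $\limsup$, and then $\delta\to1$. You instead sum over geometric blocks and pass to a Riemann sum.

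Your upper bound on the probability is correct: condition on $\mathcal F_{q^{k+1}}$ and apply Anderson's inequality block by block. It is cleaner than the paper's, since it needs nothing about the remainders. Your Step 4, using the independence of $W_{H+\gamma}$ and $I_\gamma Z_H$, is likewise a valid substitute for Lemma \ref{lem:ChungLIL1}. The lower bound, however, has a genuine gap.

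The lower bound rests on a claim about $R_k(t)=\Gamma(\lambda+1/2)^{-1}\int_0^{a}(t-s)^{\lambda-1/2}dB(s)$, $t\in[a,q^k]$, $a=q^{k+1}$. You need its sup-norm small-ball cost to be $o(\epsilon^{-1/\lambda})$, and you justify this by $R_k$ being smoother. It is not smoother where it matters. The supremum includes the left endpoint, and for $0<\lambda<1$, $\lambda\neq\frac12$,
\[
\ep\big|R_k(a+h)-R_k(a)\big|^2=\frac{h^{2\lambda}}{\Gamma^2(\lambda+1/2)}\int_0^{a/h}\big[(v+1)^{\lambda-1/2}-v^{\lambda-1/2}\big]^2\,dv\asymp h^{2\lambda},\qquad h\downarrow 0,
\]
which is exactly the local irregularity of $W_\lambda$ itself. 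So smoothness away from the endpoints plus entropy numbers does not give $o(\epsilon^{-1/\lambda})$. The negligibility comes from the irregularity being confined to a single point, and proving it is the content of Lemma \ref{lem:6}:
\begin{itemize}
\item for $0<\lambda<1$ the remainder is, up to normalization, $Z_{a,\lambda}-Z_{0,\lambda}$ in that lemma's notation, and Lemma 4.3 of Lifshits and Linde applies together with Lemma \ref{lem:ChungLIL1};
\item at $\lambda=1$ a scaling argument forces the relevant constant to satisfy $\kappa=\delta\kappa$, hence $\kappa=0$;
\item for $\lambda>1$ one inducts through $\int_\delta^tY_\lambda(s)ds=Y_{\lambda+1}(t)-(\text{a Gaussian variable})$ and Lemma \ref{lem:kernel}.
\end{itemize}
Without this lemma, or a citation covering every $\lambda>0$, your lower bound is unproved.

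A smaller point: once you keep the remainders, the blocks near $t=0$ are dependent, and the tools you list do not decouple them.
\begin{itemize}
\item Anderson's inequality points the wrong way for a lower bound on $\pr(I_k+R_k\in A)$.
\item Khatri--\v{S}id\'ak splits off one slab at a time, not a sup-norm event over an interval.
\item Lemma \ref{lem:WeakGaussCor} loses a factor at each use, so an infinite iteration must be arranged so that self-similar scaling beats the losses; that is exactly the proof of Lemma \ref{lem:selfsimilar}.
\end{itemize}
The simplest fix is Royen's inequality \eqref{eq:GaussCor}, used to split off the blocks $k>K$ without decomposing them. By self-similarity and the uniform bound $-\log\pr(\sup_{[0,1]}|W_\lambda|<x)\le Cx^{-1/\lambda}$, $x>0$, their total cost is $O(q^{K(1-\alpha/\lambda)})\epsilon^{-1/\lambda}$. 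Alternatively, first prove the a priori bound $-\log\pr(\sup_{0\le t\le 1}t^{-\alpha}|W_\lambda(t)|<\epsilon)\preccurlyeq\epsilon^{-1/\lambda}$; your ``truncate at $\delta$, then let $\delta\to0$'' uses it implicitly.

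The same remark applies to the weighted negligibility of $I_\gamma Z_H$ in Step 4. There the $\alpha=0$ input is citable, and the paper gets the weighted version by combining Lemma \ref{lem:kernel} on $[\delta,1]$ with Lemma \ref{lem:selfsimilar}.
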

When $\alpha=0$,   \eqref{eq:smallballofW} is established  by Li and Linde \cite{LiLinde1998b},  and   \eqref{eq:lem:smallballofFB:1} is established by Li and Linde \cite{LiLinde1999}.  
   To   derive the general small ball probabilities given in    Proposition \ref{prop:1} and Theorem \ref{th:1:1}, we need some lemmas.

 \begin{lemma}\label{lem:WeakGaussCor} (Li \cite[Theorem 1.1]{LiWB1999})  Let $X$   be any centered  Gaussian random element in a separable Banach
space $E$.  Then
for any $0 <\lambda < 1$, any symmetric, convex sets $A$ and $B$ in $E$,
\begin{align}\label{eq:GaussCorWeak} \pr( X\in A\cap B)\ge \pr (X\in \lambda A) \pr (X\in (1-\lambda^2)^{1/2} B). 
\end{align}
\end{lemma}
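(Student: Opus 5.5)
The plan is to compare two Gaussian elements of the product space $E\times E$ and apply Anderson's inequality there. Anderson's inequality says that if $V$ and $G$ are independent centered Gaussian elements and $C$ is symmetric and convex, then $\pr(V+G\in C)\le \pr(V\in C)$. The set $A\times B$ is symmetric and convex in $E\times E$, and $\pr(X\in A\cap B)=\pr((X,X)\in A\times B)$. So it suffices to exhibit a Gaussian element $W$ of $E\times E$ that has the law of $(X,X)$ plus an independent centered Gaussian, and for which $\pr(W\in A\times B)$ factors into the right-hand side of \eqref{eq:GaussCorWeak}.

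Write $b=(1-\lambda^2)^{1/2}$, and let $X_1,X_2$ be independent copies of $X$ with covariance operator $K$. Put $W=(X_1/\lambda,\,X_2/b)$. Its covariance is $K\otimes D$ with $D=\mathrm{diag}(\lambda^{-2},b^{-2})$, while $(X,X)$ has covariance $K\otimes M$ with $M=\begin{pmatrix}1&1\\1&1\end{pmatrix}$.

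The key algebraic check is that $D-M$ is positive semidefinite. Its trace is positive, and its determinant is
$$(\lambda^{-2}-1)(b^{-2}-1)-1=\frac{b^2}{\lambda^2}\cdot\frac{\lambda^2}{b^2}-1=0.$$
Hence $D-M=vv^{T}$ for some $v\in\mathbb{R}^2$. Taking $X_3$ an independent copy of $X$, the element $(X,X)+(v_1X_3,v_2X_3)$, with $X$ independent of $X_3$, has covariance $K\otimes M+K\otimes vv^T=K\otimes D$. It is therefore equal in law to $W$. By Anderson's inequality,
$$\pr((X,X)\in A\times B)\ge \pr(W\in A\times B)=\pr(X_1\in\lambda A)\,\pr(X_2\in bB),$$
where the last equality uses the independence of $X_1$ and $X_2$. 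This is exactly \eqref{eq:GaussCorWeak}.

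The main obstacle is justifying Anderson's inequality in a separable Banach space rather than in $\mathbb{R}^n$, since $A$ and $B$ need not be closed or finite dimensional. I would first treat the case where $A$ and $B$ are closed, using the fact that in a separable Banach space a closed symmetric convex set is a countable intersection of symmetric slabs $\{|f_k(x)|\le c_k\}$ with $f_k\in E^*$. Truncating to finitely many functionals reduces everything to finite-dimensional Gaussian vectors, where Anderson's inequality and the covariance comparison above hold verbatim. One then passes to the limit by monotone convergence of the decreasing events. For general symmetric convex sets, I would approximate from inside by closed symmetric convex subsets, or from outside by closures, using regularity of Gaussian measures. Alternatively, I would take the lemma as stated for the Borel symmetric convex sets actually used later, which are balls of weighted sup-norms and hence closed.
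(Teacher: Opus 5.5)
Your argument is correct. It is essentially Li's original proof; the paper gives no proof of its own and only cites Li. Your rank-one decomposition $D-M=vv^{T}$ with $v=(b/\lambda,-\lambda/b)$ is the same as observing that, for independent copies $X,Y$, the elements $\lambda X+bY$ and $bX-\lambda Y$ are again independent copies of $X$, after which Anderson's inequality for the centered Gaussian element $(X,X)$ of $E\times E$ and the symmetric convex set $A\times B$ concludes.

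On the technical side, Anderson's inequality for symmetric convex Borel sets under a centered Gaussian measure on a separable Banach space is standard, so you may simply cite it. If you do reduce to closed sets, two points in your sketch need care. First, the balls used in the paper are open (strict inequality $<\epsilon$); they are handled by increasing unions of closed balls, not by your claim that they are closed. Second, approximating from outside by closures is not legitimate, since e.g.\ the Cameron--Martin space is a dense symmetric convex set of measure zero.
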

\begin{lemma}\label{lem:ChungLIL1} (Li \cite[Theorem 1.2]{LiWB1999}) Let $X$ and $Y$ be any two centered joint Gaussian random elements  in a separable Banach
space with norm $\|\cdot\|$. If
$$\lim_{\epsilon\to 0}(\text{resp.} \; \liminf_{\epsilon\to 0},\; \limsup_{\epsilon\to 0})\epsilon^{\alpha} \log \pr(\|X\|<\epsilon)=-C_X  $$
and
 $$\lim_{\epsilon\to 0}\epsilon^{\alpha} \log \pr(\|Y\|<\epsilon)=0 $$
with $0< \alpha<\infty$ and $0<C_X<\infty$. Then
\begin{align}\label{eq:lem:ChungLIL1:1} & \lim_{\epsilon\to 0}(\text{resp.} \; \liminf_{\epsilon\to 0},\; \limsup_{\epsilon\to 0})\epsilon^{\alpha} \log \pr(\|X+Y\|<\epsilon)=-C_X, \\
\label{eq:lem:ChungLIL1:1}
&\lim_{\epsilon\to 0}(\text{resp.} \; \liminf_{\epsilon\to 0},\; \limsup_{\epsilon\to 0})\epsilon^{\alpha} \log \pr(\|X\|<\epsilon,\|Y\|<\epsilon)=-C_X.
\end{align}
\end{lemma}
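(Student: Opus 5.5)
The plan is to derive both displays from the weak Gaussian correlation inequality of Lemma \ref{lem:WeakGaussCor}, applied to the joint Gaussian element $(X,Y)$. We regard $(X,Y)$ as a centered Gaussian random element of the separable Banach space $E\times E$ with norm $\|(x,y)\|=\|x\|+\|y\|$. For $a,b>0$, each of the sets $\{(x,y):\|x+y\|<a\}$, $\{(x,y):\|x\|<a\}$ and $\{(x,y):\|y\|<b\}$ is symmetric and convex in $E\times E$, so Lemma \ref{lem:WeakGaussCor} applies to any pair of them. We fix $\lambda\in(0,1)$ and $\delta\in(0,1)$. The only input about $Y$ is that $\epsilon^{\alpha}\log\pr(\|Y\|<c\epsilon)\to0$ for every fixed $c>0$, which follows from the hypothesis after the change of variable $\epsilon'=c\epsilon$.

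\emph{Lower bound for $X+Y$.} By the triangle inequality and Lemma \ref{lem:WeakGaussCor},
$$\pr(\|X+Y\|<\epsilon)\ge \pr\big(\|X\|<(1-\delta)\epsilon,\ \|Y\|<\delta\epsilon\big)\ge \pr\big(\|X\|<\lambda(1-\delta)\epsilon\big)\,\pr\big(\|Y\|<(1-\lambda^2)^{1/2}\delta\epsilon\big).$$
We multiply by $\epsilon^{\alpha}$ and take logarithms. In the first factor we substitute $\epsilon'=\lambda(1-\delta)\epsilon$, and the second factor contributes $0$ in the limit. This gives a lower bound $-C_X(\lambda(1-\delta))^{-\alpha}$ for the $\lim$ (resp. $\liminf$, $\limsup$) of $\epsilon^{\alpha}\log\pr(\|X+Y\|<\epsilon)$. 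The change of variable only rescales by a constant factor, so each of the three versions is preserved.

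\emph{Upper bound for $X+Y$.} We reverse the roles of $X$ and $X+Y$:
$$\pr\big(\|X\|<(1+\delta)\epsilon\big)\ge \pr\big(\|X+Y\|<\epsilon,\ \|Y\|<\delta\epsilon\big)\ge \pr(\|X+Y\|<\lambda\epsilon)\,\pr\big(\|Y\|<(1-\lambda^2)^{1/2}\delta\epsilon\big).$$
After rescaling, this yields $\lambda^{-\alpha}\,\epsilon^{\alpha}\log\pr(\|X+Y\|<\epsilon)\le (1+\delta)^{-\alpha}\,\epsilon^{\alpha}\log\pr(\|X\|<\epsilon)+o(1)$, again up to the change of variable in $\epsilon$. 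Hence the upper bound is $-C_X\lambda^{\alpha}(1+\delta)^{-\alpha}$. Letting $\lambda\uparrow1$ and $\delta\downarrow0$ in both bounds gives the first display.

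\emph{The joint probability.} The upper bound is trivial, since $\pr(\|X\|<\epsilon,\|Y\|<\epsilon)\le\pr(\|X\|<\epsilon)$. For the lower bound, Lemma \ref{lem:WeakGaussCor} gives $\pr(\|X\|<\epsilon,\|Y\|<\epsilon)\ge \pr(\|X\|<\lambda\epsilon)\,\pr(\|Y\|<(1-\lambda^2)^{1/2}\epsilon)$, which yields the bound $-C_X\lambda^{-\alpha}$. We then let $\lambda\uparrow1$.

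The only delicate points are bookkeeping ones. First, we must justify viewing $(X,Y)$ as a single Gaussian element of a separable Banach space, so that Lemma \ref{lem:WeakGaussCor} applies to the symmetric convex sets above. Second, in the $\liminf$/$\limsup$ versions we must check that each constant rescaling of $\epsilon$ is compatible with the chosen mode of limit. Since the rescaling is by a fixed constant, it does not change $\liminf$ or $\limsup$ beyond the factor $c^{-\alpha}$ already accounted for.
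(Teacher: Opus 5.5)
Your proof is correct and takes essentially the same approach as the paper: it gives no proof of its own but cites Li's Theorem 1.2, which is derived just as you do. You apply the weak correlation inequality (Lemma \ref{lem:WeakGaussCor}) to the Gaussian element $(X,Y)$ of $E\times E$, obtaining $\pr(\|X+Y\|<\epsilon)\ge\pr(\|X\|<\lambda(1-\delta)\epsilon)\,\pr(\|Y\|<(1-\lambda^2)^{1/2}\delta\epsilon)$ and the reverse bound with $X$ and $X+Y$ exchanged. Your bookkeeping for the $\liminf$ and $\limsup$ versions is also sound, because each step is a pointwise inequality plus a term tending to $0$, under a fixed linear rescaling of $\epsilon$.
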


\begin{lemma}\label{lem:kernel}  (Li and Linde \cite[Theorem 6.1] {LiLinde1999}) Let $\{Y(t);t\in [c,d]\}$ be a continuous, centered, Gaussian process with 
$$ -\log \pr\left(\sup_{t\in [c,d]}|Y(t)|<\epsilon\right)\preccurlyeq \;(resp. \ll)\; \epsilon^{-\alpha}\Big(\log \frac{1}{\epsilon}\Big)^{\nu}, $$
where $\alpha>0$ and $\nu\ge 0$. Assume that $K(t,s):[a,b]\times[c,d]\to \mathbb R$ is a kernel function which satisfies the H\"older inequality 
$$ \int_c^d \big|K(t^{\prime},s)-K(t^{\prime\prime},s)\big|ds\le C\big|t^{\prime}-t^{\prime\prime}\big|^{\gamma },\;\; t^{\prime},t^{\prime\prime}\in [a,b], $$
for some $\gamma \in (0,1]$ and $C>0$. Then
$$ -\log \pr\left(\sup_{t\in [a,b]}\big|\int_c^dK(t,s)Y(s)ds\big|<\epsilon\right)\preccurlyeq \;(resp. \ll)\; \epsilon^{-\alpha/(\alpha\gamma +1)}\Big(\log \frac{1}{\epsilon}\Big)^{\nu/(\alpha\gamma +1)}. $$
In particular, 
\begin{equation} \label{eq:lem:kernel:1} -\log \pr\left(\sup_{t\in [a,b]}\big|\int_a^tY(s)ds\big|<\epsilon\right)\preccurlyeq \;(resp. \ll)\; \epsilon^{-\alpha/(\alpha+1)}\Big(\log \frac{1}{\epsilon}\Big)^{\nu/(\alpha+1)}. 
\end{equation}
\end{lemma}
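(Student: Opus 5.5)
The plan is to go through metric entropy, using the Kuelbs--Li--Linde correspondence between small ball rates and entropy of the unit ball of the reproducing kernel Hilbert space (RKHS). A direct probabilistic discretization is tried first only to see where it falls short. Write $T:\mathcal C[c,d]\to \mathcal C[a,b]$ for $Tf(t)=\int_c^dK(t,s)f(s)\,ds$, so that $X:=TY$ is a continuous centered Gaussian process on $[a,b]$. The Hölder hypothesis gives
$$\|Tf(t')-Tf(t'')\|\le C|t'-t''|^{\gamma}\|f\|_\infty .$$
Hence $T$ maps bounded sets of $\mathcal C[c,d]$ into $\gamma$-Hölder balls, and the RKHS unit ball of $X$ is $T(\mathcal K_Y)$, where $\mathcal K_Y$ is the RKHS unit ball of $Y$.

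\emph{Step 1 (naive bound, to calibrate).} Partition $[a,b]$ into $n\approx\delta^{-1}$ intervals with left endpoints $t_i$. Then
$$\sup_t|X(t)|\le \max_i|X(t_i)|+C\delta^\gamma\|Y\|_\infty .$$
By Lemma \ref{lem:WeakGaussCor} (or Khatri--Šidák) applied to the two symmetric convex sets, and since each $X(t_i)$ has bounded variance,
$$-\log\pr(\|X\|<\epsilon)\le -\log\pr\big(\|Y\|_\infty<c\epsilon\delta^{-\gamma}\big)+C\delta^{-1}\log(1/\epsilon).$$
Inserting the hypothesis and optimizing over $\delta$ gives the exponent $\epsilon^{-\alpha/(\alpha\gamma+1)}$ correctly. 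However, it picks up an extra factor of $\log(1/\epsilon)$ to a positive power. So this step alone proves only a weaker form of the lemma, and the logarithm has to be removed by an entropy argument.

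\emph{Step 2 (entropy).} By the Kuelbs--Li theorem (in the Li--Linde form, which allows the $\preccurlyeq$ and $\ll$ versions with log factors), the hypothesis on $Y$ is equivalent to
$$\log N(\epsilon,\mathcal K_Y,\|\cdot\|_\infty)\preccurlyeq \epsilon^{-2\alpha/(2+\alpha)}(\log 1/\epsilon)^{2\nu/(2+\alpha)}.$$
Equivalently, in terms of dyadic entropy numbers, $e_k(\mathcal K_Y)\preccurlyeq k^{-1/2-1/\alpha}(\log k)^{\nu/\alpha}$. The goal is then to show that composition with $T$ improves entropy numbers by a factor $k^{-\gamma}$:
$$e_{2k}(T(\mathcal K_Y))\preccurlyeq k^{-\gamma}e_k(\mathcal K_Y).$$
To do this, take an $e_k$-net $\{f_j\}$ of $\mathcal K_Y$ in sup norm, with at most $2^{k}$ elements. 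Each difference set $\{T(f-f_j):\|f-f_j\|_\infty\le e_k\}$ lies in a $\gamma$-Hölder ball of radius $Ce_k$. Such a ball has sup-norm entropy numbers $\preccurlyeq e_k k^{-\gamma}$ with $2^k$ elements (a classical result of Kolmogorov--Tikhomirov type, via piecewise-constant approximation on $k$ cells with quantized values). Combining the two nets gives the claimed bound.

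\emph{Step 3 (conclusion).} This yields $e_k(T\mathcal K_Y)\preccurlyeq k^{-1/2-1/\alpha-\gamma}(\log k)^{\nu/\alpha}$. Write this as $k^{-1/2-1/\alpha'}(\log k)^{\nu'/\alpha'}$ with $1/\alpha'=1/\alpha+\gamma$, that is $\alpha'=\alpha/(\alpha\gamma+1)$, and $\nu'=\nu\alpha'/\alpha=\nu/(\alpha\gamma+1)$. Applying the Kuelbs--Li--Linde converse direction (entropy upper bound $\Rightarrow$ small ball upper bound, valid since $\alpha'>0$) then gives
$$-\log\pr(\|X\|<\epsilon)\preccurlyeq \epsilon^{-\alpha'}(\log 1/\epsilon)^{\nu'},$$
which is the statement. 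The $\ll$ version follows the same way, since each step preserves $o(\cdot)$ relations. The particular case \eqref{eq:lem:kernel:1} follows by taking $K(t,s)=\mathbf 1_{\{a\le s\le t\}}$ on $[a,b]^2$, which satisfies the Hölder condition with $\gamma=1$.

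The main obstacle is Step 2. The Hölder-ball covering must be done carefully so that no logarithm is lost: the piecewise-constant net must be built with $k$ cells and $O(1)$ bits per cell, using the chaining/difference-quantization trick rather than $\log(1/\epsilon)$ bits per cell. A related technical point is that the Kuelbs--Li equivalence in the presence of the slowly varying factor $(\log)^\nu$ requires the regular-variation version due to Li and Linde. I would cite that version rather than re-derive it.
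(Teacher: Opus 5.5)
Your proof is correct and follows the same route as the cited result. The paper proves nothing here beyond quoting Li and Linde, and their composition principle (restated in the paper as Lemma~\ref{lem:3.8}, and used in the proof of Proposition~\ref{lem:3.7} together with an entropy bound for an integration operator) is exactly your Steps 2--3, with $e_n(T)\preccurlyeq n^{-\gamma}$ supplied by the Kolmogorov--Tikhomirov entropy of the H\"older ball. Step 1 can be dropped, and your claim that the $\ll$ version survives holds: the $o(\cdot)$ form of the entropy-to-small-ball direction follows from the $\preccurlyeq$ form by bootstrapping the Kuelbs--Li inequality from that a priori bound.
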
 

One of the keys to prove Proposition \ref{prop:1} and \eqref{eq:th:1:1:1} is the following  property of a self-similar Gaussian process.   
\begin{lemma}\label{lem:selfsimilar}  Let $  X(t)$ be a continuous, centered, self-similar Gaussian process of index $\tau>0$.
Suppose that
\begin{equation}\label{eq:selfsimilar.1}-\log \pr\Big(\sup_{a\le t\le b} \left|  X(t)\right|<\epsilon\Big)\preccurlyeq \; (\text{resp.} \ll)\; \epsilon^{-\beta}\Big(\log\frac{1}{\epsilon}\Big)^{\nu },   
\end{equation} 
with $0<\beta<\infty$, $0\le \nu <\infty$, $0\le a<b<\infty$. Then for all $0\le c<d<\infty$,
\begin{equation}\label{eq:selfsimilar.2}-\log \pr\left(\sup_{c\le t\le d} \left | X(t)\right |<\epsilon\right)\preccurlyeq \; (\text{resp.} \ll)\; \epsilon^{-\beta}\Big(\log\frac{1}{\epsilon}\Big)^{\nu }.
\end{equation}

As corollaries, we have  
\begin{itemize}
\item[(i)] For all $\alpha<\tau$,
$$ -\log \pr\left(\sup_{0\le t\le 1} t^{-\alpha}\left | X(t)\right |<\epsilon\right)\preccurlyeq \; (\text{resp.} \ll)\; \epsilon^{-\beta}\Big(\log\frac{1}{\epsilon}\Big)^{\nu }; $$
  \item[(ii)] For all $\alpha<\tau+1$ and $\alpha_1<\tau+1$,  
\begin{equation}\label{eq:selfsimilar.3}-\log \pr\Big(\sup_{0\le t\le 1}|t^{-(\alpha-\alpha_1)}J_{\alpha_1}(X)(t)|< \epsilon\Big)
\preccurlyeq \; (\text{resp.} \ll)\;\epsilon^{- \beta/(\beta+1)}\Big(\log\frac{1}{ \epsilon}\Big)^{\nu /(\beta+1)}, 
\end{equation}
\begin{equation}\label{eq:selfsimilar.4}-\log \pr\left(\sup_{0\le t\le 1} \left |J_{\alpha}(X)(t)\right |<\epsilon\right)\preccurlyeq \; (\text{resp.} \ll)\; \epsilon^{-\beta/(\beta+1)}\Big(\log\frac{1}{\epsilon}\Big)^{\nu /(\beta+1)},  
\end{equation}
\begin{equation}\label{eq:selfsimilar.5}-\log \pr\left(\sup_{0\le t\le 1} \left|t^{-\alpha}I(X)(t) \right |<\epsilon\right)\preccurlyeq \; (\text{resp.} \ll)\; \epsilon^{-\beta/(\beta+1)}\Big(\log\frac{1}{\epsilon}\Big)^{\nu /(\beta+1)};   
\end{equation} 
\item[(iii)] 
For all $m\ge 1$, $\alpha_1+\cdots+\alpha_i<\tau+i$, $i=1,\ldots,m$, and $\alpha<\tau+m$,   
\begin{align} \label{eq:selfsimilar.6}
-\log \pr& \left(\sup_{0\le t\le 1} \left|t^{-(\alpha-\alpha_1-\cdots-\alpha_m)} J_{m,\bm\alpha}(X)(t)-t^{-\alpha}  I_{m}(X)(t) \right|<\epsilon\right)\nonumber\\
&\preccurlyeq \; (\text{resp.} \ll)\; \epsilon^{-\beta/((m+1)\beta+1)}\Big(\log\frac{1}{\epsilon}\Big)^{\nu /((m+1)\beta+1)}.
\end{align} 
\end{itemize}
\end{lemma}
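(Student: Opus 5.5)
The plan rests on self-similarity plus Gaussian correlation tools: a Gaussian process has no worse a small-ball rate on a short or distant interval than on the reference interval, and weighted sup-norms near $0$ are handled by dyadic decomposition.

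\textbf{Main statement.} By scaling, $\{X(ct)\}_{t}\overset{\mathscr D}=\{c^\tau X(t)\}_{t}$, so for any $c>0$,
$$\pr\Big(\sup_{ca\le t\le cb}|X(t)|<\epsilon\Big)=\pr\Big(\sup_{a\le t\le b}|X(t)|<c^{-\tau}\epsilon\Big),$$
and the bound \eqref{eq:selfsimilar.1} transfers to $[ca,cb]$ with only a constant change. This already settles the $\preccurlyeq$ case.

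\begin{enumerate}
\item[(a)] \emph{Case $a>0$.} I cover $[c,d]$ by finitely many intervals $[q^{k}a,q^{k}b]$ with ratio $q=b/a$ chosen so that consecutive intervals overlap. The number of such intervals is finite unless $c=0$. I then combine them with Lemma \ref{lem:WeakGaussCor}, applied iteratively to the symmetric convex sets $\{\sup_{I_k}|X|<\epsilon\}$. Each application costs only a constant factor in $\epsilon$, so the rate $\epsilon^{-\beta}(\log\frac1\epsilon)^{\nu}$ is preserved.
\item[(b)] \emph{The interval $[0,\delta]$.} I use the dyadic pieces $[2^{-k-1},2^{-k}]$, $k\ge 0$, and require $\sup|X|<\epsilon_k$ on the $k$-th piece, with
$$\epsilon_k=\epsilon\,(1-\rho)\rho^{k}\sum_j\rho^{-j}\quad\text{or simply}\quad\epsilon_k=\epsilon\,2^{-k\eta}\ \text{ with } 0<\eta<\tau.$$
By scaling, the $k$-th probability equals $\pr\big(\sup_{[1/2,1]}|X|<2^{k(\tau-\eta)}\epsilon\big)$. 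Its $-\log$ is at most $C\,2^{-k(\tau-\eta)\beta}\epsilon^{-\beta}(\log)^{\nu}$ until $2^{k(\tau-\eta)}\epsilon$ reaches order $1$; beyond that point, $-\log$ is bounded by the tail of $\sup$ Gaussian deviations, which is summable.
\item[(c)] \emph{Combining the events.} To join infinitely many events I use Lemma \ref{lem:WeakGaussCor} with a geometric sequence of $\lambda$'s, or Khatri--Šidák. The latter gives only products of one-dimensional marginals, so instead I apply the weak correlation inequality in the form "$A=\{\sup_{[0,\delta]}\}$, $B=$ rest". The sum over $k$ is geometric, so the total is $\preccurlyeq\epsilon^{-\beta}(\log\frac1\epsilon)^\nu$.
\item[(d)] \emph{The $\ll$ version.} Every step is linear in the rate function, so the same argument gives the $\ll$ statement.
\end{enumerate}

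\textbf{Corollary (i).} This is exactly the weighted version of (b)–(c) above, with the weight $t^{-\alpha}$ absorbed into $\epsilon_k=\epsilon 2^{k\alpha}\cdot 2^{-k\eta}$. Choosing $\eta$ with $\alpha<\tau-\eta$ keeps the geometric decay, which is where the hypothesis $\alpha<\tau$ enters.

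\textbf{Corollary (ii).}
\begin{enumerate}
\item[(a)] \emph{Reduction to a compact interval.} $J_{\alpha_1}(X)$ and $I(X)$ are again self-similar, of indices $\tau+1-\alpha_1$ and $\tau+1$ respectively. By (i) and the main statement, it therefore suffices to prove the bound for the sup over a compact interval away from $0$, say $[1,2]$, and then transfer it back to $[0,1]$ with the weight.
\item[(b)] \emph{Kernel estimate.} On $[1,2]$ I write
$$J_{\alpha_1}(X)(t)=J_{\alpha_1}(X)(1)+\int_1^t s^{-\alpha_1}X(s)\,ds .$$
The first term is a single Gaussian variable and costs only $\log\frac1\epsilon$. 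The second is a kernel integral with a Lipschitz kernel ($\gamma=1$), so Lemma \ref{lem:kernel} gives the exponent $\beta/(\beta+1)$, using the bound for $X$ on $[1,2]$ from the main statement.
\item[(c)] \emph{Assembling.} The two terms are combined by Lemma \ref{lem:WeakGaussCor}. Then \eqref{eq:selfsimilar.3}–\eqref{eq:selfsimilar.5} follow by applying (i) to these self-similar processes.
\end{enumerate}

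\textbf{Corollary (iii).} I argue by induction on $m$. The difference $t^{-(\alpha-\sum\alpha_i)}J_{m,\bm\alpha}(X)-t^{-\alpha}I_m(X)$ is itself a self-similar Gaussian process of index $\tau+m-\alpha$. I would show it is smoother by one order, via the telescoping
$$J_{\alpha_m}\circ\cdots-I\circ\cdots=\sum_j(\cdots)(J_{\alpha_j}-t^{-\alpha_j}I)(\cdots).$$
Integration by parts expresses $J_{\alpha_j}(Y)(t)-t^{-\alpha_j}I(Y)(t)=\alpha_j\int_0^t s^{-\alpha_j-1}I(Y)(s)\,ds$, an extra integration of an $(m+1)$-fold integral. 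Iterating (ii) $m+1$ times turns $\beta$ into $\beta/((m+1)\beta+1)$.

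The main obstacle is step (b)–(c) of the main statement near $t=0$: merging countably many sup-constraints without losing the rate. I would handle it by choosing $\lambda$'s in Lemma \ref{lem:WeakGaussCor} so that $\prod(1-\lambda_k^2)^{1/2}$ stays bounded below, which requires $\sum\lambda_k^2<\infty$. This is compatible with the geometric decay of the scaled probabilities, since the $\lambda_k$ multiply the $\epsilon_k$ only polynomially in $k$.
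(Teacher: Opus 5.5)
Your proposal is correct. For the main statement, and hence for (i), it takes a genuinely different route from the paper.

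\textbf{Main statement.} The paper first reduces to $[c,d]=[0,1]$ using $\sup_{[c,d]}\le\sup_{[0,d]}$ and scaling, and rescales the hypothesis to $[\delta,1]$ with $\delta=a/b$. It then applies Lemma~\ref{lem:WeakGaussCor} once per step, always with the same $\lambda$, chosen so that $\lambda_0=\lambda\delta^{-\tau}>1$, to split $[0,1]=[0,\delta]\cup[\delta,1]$. Self-similarity identifies $\pr(\sup_{[0,\delta]}|X|<\lambda\epsilon)$ with $\pr(\sup_{[0,1]}|X|<\lambda_0\epsilon)$. The paper iterates this recursion roughly $\log(1/\epsilon)$ times, until $\lambda_0^{k+1}\epsilon>\epsilon_0$. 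That yields the geometric sum $\sum_i(\sqrt{1-\lambda^2}\lambda_0^i\epsilon)^{-\beta}(\log(1/\epsilon))^{\nu}$ plus the fixed constant $\log\pr(\sup_{[0,1]}|X|<\epsilon_0)$.

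Your dyadic scheme reaches the same geometric decay, using decaying thresholds and countably many weights with $\sum\lambda_k^2<\infty$. It costs more machinery, though:
\begin{itemize}
\item a passage to the limit in the iterated inequality (harmless, by continuity from above);
\item Gaussian tail bounds for the pieces whose rescaled threshold is large;
\item your step (a), which is needed only to get the bound on $[1/2,1]$ when $b/a<2$.
\end{itemize}
The renormalization argument needs none of these.

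\textbf{Corollary (i).} The paper simply notes that $t^{-\alpha}X(t)$ is self-similar of index $\tau-\alpha>0$. Its bound on $[\delta,1]$ is trivial, so the main statement applies verbatim. Your direct weighted dyadic argument also works, but the thresholds should be $\epsilon\,2^{-k(\alpha+\eta)}$: the sign of $\alpha$ in your formula is reversed. The condition you state, $\alpha<\tau-\eta$, is the right one.

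\textbf{Corollary (ii).} This matches the paper: a Lipschitz kernel away from $0$ via Lemma~\ref{lem:kernel}, plus one Gaussian variable. The only difference is that you strip the outer weight via (i), whereas the paper builds it into the kernel $t^{-(\alpha-\alpha_1)}s^{-\alpha_1}\mathbbm{I}\{\delta\le s\le t\}$.

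\textbf{Corollary (iii).} Your telescoping over the commutators $J_{\alpha_j}-t^{-\alpha_j}I$ produces $m(m+1)/2$ terms. Each is an $(m+1)$-fold weighted integral to which (ii) is applied $m+1$ times. The required index conditions do hold for every term under $\alpha_1+\cdots+\alpha_i<\tau+i$ and $\alpha<\tau+m$, but you would have to verify them term by term. The paper's induction goes through the single intermediate process $J_{\alpha_1+\cdots+\alpha_m}(I_{m-1}X)$, which keeps the bookkeeping to two terms per step.

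\textbf{A misstatement.} Your remark that scaling alone ``already settles the $\preccurlyeq$ case'' is not right. Scaling only moves $[a,b]$ to intervals with the same endpoint ratio, which is exactly why your steps (a)--(c) are needed.
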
 
{\bf Proof.}  For \eqref{eq:selfsimilar.2}, notice that $\sup_{t\in[c,d]}|X(t)|\le  \sup_{t\in[0,d]}|X(t)|\overset{d}=d^{\tau}\sup_{t\in[0,1]}|X(t)|$. Without loss of generality, we assume $[c,d]=[0,1]$.  By \eqref{eq:selfsimilar.1} and the self-similarity of $X(t)$, 
$$ -\log \pr\Big(\sup_{a/b\le t\le 1} \left|  X(t)\right|<\epsilon\Big)=-\log \pr\Big(\sup_{a\le t\le b} \left|  X(t)\right|<\epsilon b^{\tau}\Big)\preccurlyeq \epsilon^{-\beta}\Big(\log\frac{1}{\epsilon}\Big)^{\nu }. $$
Thus, there exist  $0\le \delta<1$  and two  positive constants  $\kappa$ and $\epsilon_0$ such that
 \begin{equation}\label{eq:prooflem:ChungLIL5.1}
  \log \pr\Big(\sup_{\delta\le t\le 1} |X(t)|< \epsilon\Big) 
\ge
 -\kappa\epsilon^{-\beta}\Big(\log\frac{1}{\epsilon}\Big)^{\nu }, \;\; 0<\epsilon\le \epsilon_0.
 \end{equation}
 If $\delta=0$, then \eqref{eq:selfsimilar.2} holds. Suppose $0<\delta<1$. Choose $0<\lambda<1$ such that  $\lambda_0=\lambda\delta^{-\tau}>1$. By  Lemma \ref{lem:WeakGaussCor}, 
 \begin{align*}
   \pr\Big(\sup_{0\le t \le 1} \left|X(t)\right|<\epsilon\Big)
\ge  & \pr\Big(\sup_{0\le t\le \delta} \left|X(t)\right|< \lambda\epsilon\Big)\pr\Big( \sup_{\delta \le t\le 1} \left|X(t)\right|< \sqrt{1-\lambda^2}\epsilon \Big)\\
= &   \pr\Big(\sup_{0\le t\le 1}\left|X(t)\right|<\lambda_0\epsilon\Big)\pr\Big( \sup_{\delta\le t\le 1} \left|X(t)\right|< \sqrt{1-\lambda^2}\epsilon \Big).
 \end{align*}
 For $0<\epsilon\le \epsilon_0/\lambda_0^3$, choose $k$ such that $\lambda_0^k\epsilon\le \epsilon_0<\lambda_0^{k+1}\epsilon$. Then, we have that
 \begin{align*}
&\log\pr\Big(\sup_{0\le t \le 1} \left|X(t)\right|<\epsilon\Big) \\   
\ge  & \log \pr\Big(\sup_{0\le t \le 1} \left|X(t)\right|<\lambda_0^{k+1}\epsilon\Big)+\sum_{i=0}^{k}\log \pr\Big( \sup_{\delta\le t\le 1} \left|X(t)\right|< \sqrt{1-\lambda^2}\lambda_0^i\epsilon \Big)\\
\ge & \log \pr\Big(\sup_{0\le t \le 1} \left|X(t)\right|<\epsilon_0\Big)- \kappa 
\sum_{i=0}^k \big(\sqrt{1-\lambda^2}\lambda_0^i\epsilon\big)^{-\beta}\Big(\log\frac{1}{\sqrt{1-\lambda^2}\lambda_0^i\epsilon}\Big)^{\nu }\\
\ge  &  \log\pr\Big(\sup_{0\le t \le 1} \left|X(t)\right|<\epsilon_0\Big)-\frac{\kappa}{(1-\lambda^2)^{\beta/2}(1-\lambda_0^{-\beta})} \epsilon^{-\beta}\Big(\log\frac{1}{\sqrt{1-\lambda^2} \epsilon}\Big)^{\nu }.
\end{align*}  
When $\preccurlyeq$ is replaced $\ll$, $\kappa$ can be an arbitrarily small constant. \eqref{eq:selfsimilar.2}   is proven.

 For (i), we let $Y(t)=t^{-\alpha}X(t)$. It is sufficient to notice that $Y(t)$ is a continuous,   centered, self-similar Gaussian process of index $\tau-\alpha$ with
  $$ -\log \pr\left(\sup_{\delta\le t\le 1}|Y(t)|<\epsilon\right)\le -\log \pr\left(\sup_{\delta\le t\le 1}|X(t)|<\epsilon \delta^{-|\alpha|}\right)\preccurlyeq \; (\text{resp.} \ll)\; \epsilon^{-\beta}\Big(\log\frac{1}{\epsilon}\Big)^{\nu }. $$
  
  For (ii), we let $Y(t)=t^{-(\alpha-\alpha_1)}\int_0^t s^{-\alpha_1} X(s)ds$, $\alpha,\alpha_1<\tau+1$. Then $Y(t)$ is well-defined and a continuous,   centered, self-similar Gaussian process of index $\tau+1-\alpha$. 
For $0<\delta<1$, let 
$ K(t,s)=t^{-(\alpha-\alpha_1)}s^{-\alpha_1}I\{\delta\le s\le t\}. $
Then $K(t,s)$ satisfies the H\"older condition
$$ \int_{0}^1\left|K(t^{\prime},s)- K(t^{\prime\prime},s)\right|ds\le c|t^{\prime}-t^{\prime\prime}|, \;\; t^{\prime}, t^{\prime\prime}\in [\delta,1], $$
and $\sup_{\delta\le t\le 1}|Y(t)|=\sup_{\delta\le t\le 1}\big|\int_{0}^1 K(t,s) X(s)ds+t^{-(\alpha-\alpha_1)}\int_0^{\delta} s^{-\alpha_1}X(s)ds\big|$.
By \eqref{eq:selfsimilar.2}  and Lemma \ref{lem:kernel}, we have that
\begin{equation}\label{eq:lemChungLIL5.ad.1}
 -\log \pr\Big(\sup_{\delta\le t\le 1}\big|\int_{0}^1 K(t,s) X(s)ds\big|< \epsilon\Big)\preccurlyeq \epsilon^{- \beta/(\beta+1)}\Big(\log\frac{1}{ \epsilon}\Big)^{\nu /(\beta+1)}. 
 \end{equation}
On the other hand, since $\int_0^{\delta} s^{-\alpha_1}X(s)ds$ is a centered normal random variable, it is easily checked that 
$$ \pr\Big(\sup_{\delta\le t\le 1} t^{-(\alpha-\alpha_1)}\big|\int_0^{\delta} s^{-\alpha_1}X(s)ds\big|<  \epsilon\Big) =\pr\Big((1\vee \delta^{\alpha_1-\alpha})\big|\int_0^{\delta} s^{-\alpha_1}X(s)ds\big|<  \epsilon\Big)\approx \epsilon. $$
By applying Lemma \ref{lem:WeakGaussCor}, it follows that 
$$-\log \pr\Big(\sup_{\delta\le t\le 1}|Y(t)|< \epsilon\Big)
\preccurlyeq \epsilon^{- \beta/(\beta+1)}\Big(\log\frac{1}{ \epsilon}\Big)^{\nu /(\beta+1)}. 
$$  
  By \eqref{eq:selfsimilar.2}, \eqref{eq:selfsimilar.3} is proven.   \eqref{eq:selfsimilar.4} is a special case of \eqref{eq:selfsimilar.3} with $\alpha_1=\alpha$,  and  
\eqref{eq:selfsimilar.5} is a special case of \eqref{eq:selfsimilar.3} with $\alpha_1=0$.   

For (iii),  we let $Y(t)=I(X)(t)$.  Then
$$ -\log\pr\left(\sup_{0\le t\le 1}|Y(t)|<\epsilon\right) \preccurlyeq  \epsilon^{-\beta/(\beta+1)}\Big(\log\frac{1}{\epsilon}\Big)^{\nu /(\beta+1)} $$
as shown. Notice that 
\begin{align*}
& t^{-(\alpha-\alpha_1)}J_{\alpha_1}(X)(t)-t^{-\alpha}I(X)(t)
 =  t^{-(\alpha-\alpha_1)}\int_0^t(s^{-\alpha_1}-t^{-\alpha_1})d Y(s)\\
  =&\alpha_1t^{-\{(\alpha+1)-(\alpha_1+1)\}}\int_0^t s^{-(\alpha_1+1)}Y(s) ds, 
\end{align*}
 and $Y(t)$ is a continuous, centered, self-similar Gaussian process of index $\tau+1$.  By \eqref{eq:selfsimilar.3}, we have that 
\begin{align}\label{eq:prooflem:ChungLIL5.2}
  &-\log \pr \left(\sup_{0\le t\le 1} \left |t^{-(\alpha-\alpha_1)}J_{\alpha_1}(X)(t)-t^{-\alpha}I(X)(t)\right |<\epsilon\right)\nonumber\\
\preccurlyeq & \epsilon^{-\frac{\beta/(\beta+1)}{\beta/(\beta+1)+1}}\Big(\log\frac{1}{\epsilon}\Big)^{\frac{\nu /(\beta+1)}{\beta/(\beta+1)+1}}
=\epsilon^{-\beta/(2\beta+1)}\Big(\log\frac{1}{\epsilon}\Big)^{\nu /(2\beta+1)}
\end{align}
for all $\alpha,\alpha_1<\tau+1$. Thus, \eqref{eq:selfsimilar.6} holds for $m=1$.

For $m\ge 2$, we let $X(t)=  J_{m-1,\bm\alpha_{m-1}}(X)(t)-t^{-\alpha_1-\cdots-\alpha_{m-1}}  I_{m-1}(X)(t)$, where $\bm\alpha_{m-1}=(\alpha_1,\ldots,\alpha_{m-1})$. 
Suppose that \eqref{eq:selfsimilar.6} holds for $m-1$. Then
$$-\log \pr\left(\sup_{0\le t\le 1} \left|X(t) \right|<\epsilon\right)\preccurlyeq \epsilon^{-\beta/(m\beta+1)}\Big(\log\frac{1}{\epsilon}\Big)^{\nu /(m\beta+1)}. $$
It is easily seen that $X(t)$ is a continuous, centered, self-similar Gaussian process of index $\tau^{\prime}=\tau+m-1-\alpha_1-\cdots-\alpha_{m-1}$. 
By \eqref{eq:selfsimilar.3} of Lemma \ref{lem:selfsimilar},
\begin{align*}
 & -\log \pr  \left(\sup_{0\le t\le 1} t^{-(\beta-\alpha_m)}\left|\int_0^t s^{-\alpha_m} X(s)ds \right|<\epsilon\right)\\
  &\preccurlyeq \epsilon^{-\frac{\beta/(m\beta+1)}{\beta/(m\beta+1)+1}}\Big(\log\frac{1}{\epsilon}\Big)^{\frac{\nu /(m\beta+1)}{\beta/(m\beta+1)+1}}=\epsilon^{-\beta/((m+1)\beta+1)}\Big(\log\frac{1}{\epsilon}\Big)^{\nu /((m+1)\beta+1)}
\end{align*}
holds for all $\alpha_m<\tau^{\prime}+1$ and $\beta<\tau^{\prime}+1$. Choosing $\beta=\alpha-\alpha_1-\cdots-\alpha_{m-1}$ 
 yields
\begin{align}  \label{eq:prooflem:ChungLIL5.3}
  -\log \pr& \left(\sup_{0\le t\le 1} t^{-(\alpha-\alpha_1-\cdots-\alpha_m)}\left|J_{m,\bm\alpha}(X)(s)ds-J_{\alpha_1+\cdots+\alpha_m}\big(I_{m-1}X\big)(t)\right|<\epsilon\right)
\nonumber\\
&\preccurlyeq \epsilon^{-\beta/((m+1)\beta+1)}\Big(\log\frac{1}{\epsilon}\Big)^{\nu /((m+1)\beta+1)}.
\end{align}
On the other hand, by repeating \eqref{eq:selfsimilar.5}  with $\alpha=0$, we have that 
$$ -\log \pr  \left(\sup_{0\le t\le 1} \left |I_{m-1}X(t)\right |<\epsilon\right) \preccurlyeq  \epsilon^{-\beta/((m-1)\beta+1)}\Big(\log\frac{1}{\epsilon}\Big)^{\nu /((m-1)\beta+1)}.
$$
Notice that $I_{m-1}X$ is a centered, self-similar Gaussian process of index $m-1+\tau$. \eqref{eq:prooflem:ChungLIL5.2} applies to $I_{m-1}X$ and gives that
\begin{align}  \label{eq:prooflem:ChungLIL5.4}
-\log \pr& \left(\sup_{0\le t\le 1} \left |t^{-(\alpha-\alpha_1-\cdots-\alpha_m)}J_{\alpha_1+\cdots+\alpha_m}\big(I_{m-1}X\big)(t)-t^{-\alpha}I_{m}(X)(t)\right |<\epsilon\right)\nonumber\\
&\preccurlyeq  \epsilon^{-\beta/((m+1)\beta+1)}\Big(\log\frac{1}{\epsilon}\Big)^{\nu /((m+1)\beta+1)}, \;\; \alpha, \alpha_1+\cdots+\alpha_m<m+\tau. 
\end{align}
By Lemma \ref{lem:WeakGaussCor},   \eqref{eq:selfsimilar.6} follows from \eqref{eq:prooflem:ChungLIL5.3} and \eqref{eq:prooflem:ChungLIL5.4}.

Finally, if $\preccurlyeq$ in \eqref{eq:selfsimilar.1} is replaced by $\ll$, then \eqref{eq:lemChungLIL5.ad.1} holds with $\ll$ taking the place of $\preccurlyeq$. 
Thus,  \eqref{eq:selfsimilar.3}-\eqref{eq:selfsimilar.6} hold.  The proof of the lemma is completed. 
$\Box$

\begin{lemma}\label{lem:6} Let $B(t)$ be a standard Brownian motion. For every $\lambda>0$ and $0\le a<b$, we have that 
\begin{equation}\label{eq:lem:6:1} \lim_{\epsilon\to 0}\epsilon^{1/\lambda}\log\pr\left(\sup_{a\le t\le b}\frac{1}{\Gamma\big(\lambda+1/2\big)}\left|\int_{0}^{a}  (t-s)^{\lambda-1/2} d B(s)\right|<\epsilon\right)=0.
\end{equation}
\end{lemma}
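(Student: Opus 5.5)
The case $a=0$ is trivial, since the integral vanishes. Assume $a>0$, write $t=a+u$ with $u\in[0,b-a]$, and reverse time: $B'(r)=B(a)-B(a-r)$, $0\le r\le a$, is again a standard Brownian motion. Then the process in \eqref{eq:lem:6:1} is, up to the factor $1/\Gamma(\lambda+1/2)$,
$$Y(u)=\int_0^a (u+r)^{\lambda-1/2}\,dB'(r),\qquad u\in[0,b-a].$$
The kernel is singular only at the corner $u=r=0$. Away from it, $Y$ is a very smooth (indeed real-analytic) function of $u$. The plan is to cut off the singular part at a scale $\rho=\rho(\epsilon)$ tied to $\epsilon$. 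We then show that the near part costs only $O(1)$ in $\log\pr$ and the far part only a power of $\log(1/\epsilon)$. Both are $o(\epsilon^{-1/\lambda})$.

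\emph{Splitting.} Put $Y=Y_1+Y_2$, where $Y_1(u)=\int_0^\rho(u+r)^{\lambda-1/2}dB'(r)$ and $Y_2(u)=\int_\rho^a(u+r)^{\lambda-1/2}dB'(r)$. These are independent centered Gaussian processes, since they use the increments of $B'$ on disjoint intervals. Hence
$$\pr\Big(\sup|Y|<\epsilon\Big)\ge \pr\Big(\sup|Y_1|<\epsilon/2\Big)\,\pr\Big(\sup|Y_2|<\epsilon/2\Big).$$
Alternatively, Lemma \ref{lem:WeakGaussCor} gives the same kind of product bound.

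\emph{Far part.} For $r\ge\rho$, the map $u\mapsto(u+r)^{\lambda-1/2}$ has $k$-th derivative bounded by $C_\lambda\, k!\,\rho^{-k}$ for $r\in[\rho,a]$, uniformly in $u\ge0$. Cover $[0,b-a]$ by $N\approx (b-a)/\rho$ intervals of length $\rho/2$. On each interval, replace $Y_2$ by its Taylor polynomial of degree $n\approx C\log(1/\epsilon)$; the remainder is then at most $2^{-n}$ times a Gaussian with bounded variance. This reduces the event $\{\sup|Y_2|<\epsilon/2\}$ to the event that a centered Gaussian vector of dimension $Nn$, with uniformly bounded variances, lies in a cube of side comparable to $\epsilon$ (up to factors polynomial in $n$), intersected with a remainder event of probability bounded below. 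By Anderson's inequality and \v{S}id\'ak's inequality (or Lemma \ref{lem:WeakGaussCor} applied coordinatewise), the cost is at most $C Nn\log(1/\epsilon)\le C\rho^{-1}\log^2(1/\epsilon)$.

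\emph{Near part, and the choice of $\rho$.} By Brownian scaling $r=\rho r'$, the process $Y_1(u)$ has the law of $\rho^{\lambda}\int_0^1(u/\rho+r')^{\lambda-1/2}dB''(r')$. On $u/\rho\le 1$ this is $\rho^\lambda$ times a fixed continuous Gaussian process. For large $v=u/\rho$, an integration by parts shows that $\int_0^1(v+r')^{\lambda-1/2}dB''(r')$ is bounded by $C(1+v)^{\lambda-1/2}\sup_{[0,1]}|B''|$. Hence $\sup_{u\le b-a}|Y_1(u)|\le C\,\rho^{\lambda\wedge 1/2}\,S$ with $S$ an a.s. finite random variable. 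Take $\rho=\epsilon^{q}$ with $q$ large enough that $C\rho^{\lambda\wedge1/2}\le\epsilon/(2M)$ for a fixed $M$ with $\pr(S<M)\ge 1/2$. The near part then costs only $\log 2$, while the far part costs $C\epsilon^{-q}\log^2(1/\epsilon)$.

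This last bound is the main obstacle, because $q$ may exceed $1/\lambda$ and then the far part is too expensive. To repair it I would not take a single cut. Instead I would decompose $[\rho,a]$ dyadically, $r\in[2^{-j-1}a,2^{-j}a]$, into independent pieces $Y_{2,j}$ with separate tolerances $\epsilon_j=\epsilon\,c\,2^{-\delta|j-j_0|}$, where $2^{-j_0}\approx\epsilon^{1/\lambda}$. Piece $j$ has amplitude about $2^{-j\lambda}$, and its Taylor cost is about $2^{j}\log^2(1/\epsilon)$, counted only on the range $u\lesssim 2^{-j}$ where it is not already smaller than $\epsilon_j$. Summing over $j$, the cost should come to $O(\epsilon^{-1/\lambda}\theta)$ plus polylogarithmic terms, where $\theta$ can be made arbitrarily small by shifting $j_0$. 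That gives $\limsup_{\epsilon\to0}\epsilon^{1/\lambda}(-\log\pr)\le C\theta$ for every $\theta>0$, which is \eqref{eq:lem:6:1}.
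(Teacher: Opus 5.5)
\textbf{There is a genuine gap: the single cut cannot close, and the dyadic repair, which is where the proof would have to happen, does not work as sketched.}

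The single cut fails not only when $q>1/\lambda$. Your near part forces $\rho\lesssim\epsilon^{1/(\lambda\wedge 1/2)}\le\epsilon^{1/\lambda}$, so your far-part bound $C\rho^{-1}\log^2(1/\epsilon)$ is never $o(\epsilon^{-1/\lambda})$, for any $\lambda$.

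The repair rests on the premise that piece $j$ is already below $\epsilon_j$ off the range $u\lesssim 2^{-j}$. This is false for $\lambda>1/2$. For $u\gg 2^{-j}$ one has $\Var\, Y_{2,j}(u)\asymp 2^{-j}u^{2\lambda-1}$, so at $u\asymp 1$ piece $j$ has size about $2^{-j/2}$. That exceeds $\epsilon$ for every $j<2\log_2(1/\epsilon)$, well beyond $j_0\approx\lambda^{-1}\log_2(1/\epsilon)$. This is the same effect that makes your near part of size $u^{\lambda-1/2}|B'(\rho)|$ at $u\asymp1$ and forces $\rho\lesssim\epsilon^2$. Your uniform blocks of length $2^{-j}$ charge each such piece $2^j\log^2(1/\epsilon)$, for a total of about $\epsilon^{-2}\log^2(1/\epsilon)\gg\epsilon^{-1/\lambda}$.

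Independently of the sign of $\lambda-1/2$, the final accounting is only asserted. $2^j$ blocks is the price of covering all of $[0,b-a]$, not the range $u\lesssim 2^{-j}$. If each $j\le j_0$ is charged $2^j\log^2(1/\epsilon)$, shifting $j_0$ only yields $\theta\,\epsilon^{-1/\lambda}\log^2(1/\epsilon)$, and the logarithmic factor still defeats the limit.

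\textbf{The missing idea is to cover $[0,b-a]$ at the scale on which $Y_2$ actually varies.} For $r\ge\rho$, the map $u\mapsto(u+r)^{\lambda-1/2}$ is analytic on the disc of radius $u+\rho$ about $u$. So $Y_2$ is smooth on scale $u+\rho$, not $\rho$.

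Put $u_0=0$ and $u_{k+1}=u_k+(u_k+\rho)/2$. Then $K=O(\log(1/\rho))$ blocks suffice. On each block, the binomial expansion of $(u_k+r)^{\lambda-1/2}(1+x)^{\lambda-1/2}$, with $x=(u-u_k)/(u_k+r)\in[0,1/2]$, converges geometrically, uniformly in $r\in[\rho,a]$.

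Keep $n\asymp\log(1/\epsilon)$ terms. The $Kn$ coefficients each have variance at most polynomial in $1/\epsilon$, so \v{S}id\'ak's inequality applies to them. A Markov/union bound controls the remainders, and Lemma \ref{lem:WeakGaussCor} combines the two events. This gives $-\log\pr(\sup|Y_2|<\epsilon/2)=O(\log^3(1/\epsilon))$ for $\rho=\epsilon^q$ with any fixed $q$.

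Together with your near-part bound, this closes the single cut for all $\lambda>0$ without any dyadic scheme. It even yields $\epsilon^{\beta}\log\pr\to0$ for every $\beta>0$.

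That would be a self-contained proof, uniform in $\lambda$. The paper argues differently in three stages:
\begin{itemize}
\item For $\lambda<1$, it writes the integral as $Z_{\delta,\lambda}-Z_{0,\lambda}$ and imports Lemma 4.3 of Lifshits and Linde.
\item For $\lambda=1$, it uses integration by parts, Lemma \ref{lem:kernel}, and the scaling identity $\kappa=\delta\kappa$.
\item For $\lambda>1$, it inducts, integrating $Y_\lambda$ to obtain $Y_{\lambda+1}$ up to a single Gaussian variable.
\end{itemize}
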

{\bf Proof}. Without loss of generality, we assume $b=1$, $a=\delta\in [0,1)$. At first, we assume $0<\lambda<1$.  For $0\le \delta<1$, let 
$$Z_{\delta,\lambda}(t)= \int_{-\infty}^{\delta}\left((t-s)^{\lambda-1/2}-(-s)_+^{\lambda-1/2}\right)dB(s), t\ge 0,$$
By  Lemma 4.3 of Lifshits and Linde \cite{LifshitsLinde2005}, 
$$ \lim_{\epsilon\to 0}\epsilon^{\rho}\log \pr\left(\sup_{\delta\le t\le 1}|Z_{\delta,\lambda}|<\epsilon\right)=0, \; \forall \rho>0. $$
In particular,
 $$ \lim_{\epsilon\to 0}\epsilon^{\rho}\log \pr\left(\sup_{\delta\le t\le 1}|Z_{0,\lambda}|<\epsilon\right)
 \ge \lim_{\epsilon\to 0}\epsilon^{\rho}\log \pr\left(\sup_{0\le t\le 1}|Z_{0,\lambda}|<\epsilon\right)=0, \; \forall \rho>0. $$
 Notice that
 $$ \int_{0}^{\delta}  (t-s)^{\lambda-1/2} d B(s)=Z_{\delta,\lambda}-Z_{0,\lambda}. $$
 By Lemma \ref{lem:ChungLIL1},
 $$  \lim_{\epsilon\to 0}\epsilon^{\rho}\log\pr\left(\sup_{\delta\le t\le 1}\left|\int_{0}^{\delta}  (t-s)^{\lambda-1/2} d B(s)\right|<\epsilon\right)=0, \;\forall \rho>0. $$
 \eqref{eq:lem:6:1} holds. 
 
 Next, assume $\lambda=1$. Notice that 
 $$\Big\{\int_0^{\delta}(t-s)^{1/2}d B(s);t\ge \delta\Big\}\overset{d}=\Big\{\int_0^{\delta}(t-\delta+s)^{1/2}d B(s);t\ge \delta\Big\},$$ 
 $$ \int_0^{\delta}(t-\delta+s)^{1/2}d B(s)=t^{1/2}B(\delta)-\frac{1}{2}\int_0^{\delta} (t-\delta+s)^{-1/2}B(s)ds, $$
 $$ \pr\left(\sup_{\delta\le t\le 1}|t^{1/2}B(\delta)|<\epsilon\right)=
 \pr\left(|B(\delta)|<\epsilon\right)\approx \epsilon. $$
 By Lemma \ref{lem:ChungLIL1}, it is sufficient to show that
\begin{equation}\label{eq:lem:6:2} \liminf_{\epsilon\to 0} \epsilon\log \pr\left(\sup_{0\le t\le 1-\delta}\left|\int_0^{\delta} (t+s)^{-1/2}B(s)ds\right|<\epsilon\right)=0.
\end{equation}
 Let $0\le a<b, 0\le c<d$, $K(t,s)=(t+s)^{-1/2}, t\in[a,b],s\in[c,d]$. When $a>0$ or $c>0$, it is easily checked that
 $$\int_a^b \big|K(t^{\prime},s)-K(t^{\prime\prime},s)\big|ds\le C|t^{\prime}-t^{\prime\prime}|,\;\; t^{\prime},t^{\prime\prime}\in [a,b]. $$
  When $a=c=0$, assume $0\le t^{\prime}<t^{\prime\prime}\le b$. Then
 \begin{align*}
 \int_a^b \big|K(t^{\prime},s)-K(t^{\prime\prime},s)\big|ds& 
  =2\left[ (t^{\prime}+s)^{1/2}-(t^{\prime\prime}+s)^{1/2}\right]\Big|_{s=0}^b \\
  \le &\frac{2|t^{\prime}-t^{\prime\prime}|}{(t^{\prime})^{1/2}+(t^{\prime\prime})^{1/2}} 
  \le 2|t^{\prime}-t^{\prime\prime}|^{1/2}. 
  \end{align*}
 By Lemma \ref{lem:kernel}, 
\begin{align} \label{eq:lem:6:3} & -\log \pr\left(\sup_{a\le t\le b}\big|\int_c^d(t+s)^{-1/2} B(s)ds\big|<\epsilon\right) \preccurlyeq \epsilon^{-\frac{2}{2*1+1}}=\epsilon^{-2/3}, \text{ if } a+c>0, \\
& -\log \pr\left(\sup_{a\le t\le b}\big|\int_c^d(t+s)^{-1/2} B(s)ds\big|<\epsilon\right) \preccurlyeq \epsilon^{-\frac{2}{2*\frac{1}{2}+1}}=\epsilon^{-1}, \text{ if } a=c=0. \label{eq:lem:6:4} 
\end{align}
  By \eqref{eq:lem:6:4}, the limit value in \eqref{eq:lem:6:2} is finite. We denote it by $\kappa$. By \eqref{eq:lem:6:3} and Lemma \ref{lem:ChungLIL1},
 \begin{align*}
 \kappa=&\liminf_{\epsilon\to 0} \epsilon\log \pr\left(\sup_{0\le t\le \delta}\left|\int_0^{\delta} (t+s)^{-1/2}B(s)ds\right|<\epsilon\right)\\
 =& \liminf_{\epsilon\to 0} \epsilon\log \pr\left(\sup_{0\le t\le \delta}\left|\int_0^1 (t+s)^{-1/2}B(s)ds\right|<\epsilon\right)\\
 = &
 \liminf_{\epsilon\to 0} \epsilon\log \pr\left(\sup_{0\le t\le 1}\left|\int_0^1 (t+s)^{-1/2}B(s)ds\right|<\epsilon\right).
 \end{align*} 
 On the other hand,
\begin{align*}
\sup_{0\le t\le \delta}\left|\int_0^{\delta} (t+s)^{-1/2}B(s)ds\right|=&
 \sup_{0\le t\le 1}\left|\int_0^1 (\delta t+\delta s)^{-1/2}B(\delta s)\delta ds\right|\\
 \overset{\mathcal{D}}=& \delta\sup_{0\le t\le 1}\left|\int_0^1 (  t+  s)^{-1/2}B(  s)  ds\right|.
 \end{align*}
 Thus, $\kappa=\delta\kappa$. Hence, we must have $\kappa=0$. \eqref{eq:lem:6:2} is proven. So, \eqref{eq:lem:6:1} holds for all $0<\lambda\le 1$.  
 
 Finally, we show \eqref{eq:lem:6:1} for all $\lambda\ge 1$ by the induction. Suppose \eqref{eq:lem:6:1} holds for $\lambda>0$. Let $Y_{\lambda}(t)=\frac{1}{\Gamma\big(\lambda+1/2\big)}\int_{0}^{\delta}  (t-s)^{\lambda-1/2} d B(s)$. 
 By applying Lemma \ref{lem:kernel} to   $\alpha=1/\lambda$ and $\nu=0$ (c.f. \eqref{eq:lem:kernel:1}), we have
 $$ \lim_{\epsilon\to 0}\epsilon^{\frac{1/\lambda}{1/\lambda+1}}\log \pr\left(\sup_{\delta\le t\le 1}\big|\int_{\delta}^t Y_{\lambda}(s)ds\big|<\epsilon\right)=0.$$
 Notice that
 $$ \int_{\delta}^t Y_{\lambda}(s)ds =Y_{\lambda+1}(t)-\frac{1}{\Gamma\big(\lambda+3/2\big)}\int_0^{\delta}(\delta-s)^{\lambda+1/2}d B(s). $$ 
 The second term above is a centered normal random variables.  By Lemma \ref{lem:ChungLIL1} again, 
 $$ \lim_{\epsilon\to 0}\epsilon^{1/(\lambda+1)}\log \pr\left(\sup_{\delta\le t\le 1}|Y_{\lambda+1}(t)|<\epsilon\right)=0.$$
 By the induction, \eqref{eq:lem:6:1} holds for all $\lambda>0$.  $\Box$.

 {\bf Proof of Proposition \ref{prop:1}.} Notice that
 $ B_H(t)=a_H\big(W_H(t)+Z_H(t)\big) $, 
 where 
$a_H$, $W_H(t)$ and $Z_H(t)$   are defined as \eqref{eqdefaH}, \eqref{eqdefW} and  \eqref{eqdefZH}, respectively.
  We first consider $W_{\lambda}$. By Theorem 2.1 of Li and Linde \cite{LiLinde1998b},   \eqref{eq:smallballofW} holds for $\alpha=0$, c.f. \eqref{eq:constant:1}.
Write $P(\epsilon)=\log \pr\left(\sup_{0\le  t\le 1}\frac{1}{t^{\alpha}} \left|W_{\lambda}(t)\right|< \epsilon\right)$. Notice that
$W_{\lambda}(t)$ is a centered, self-similar Gaussian process of index $\lambda$.  By Lemma \ref{lem:selfsimilar} (i),  we have that
$0\le -P(\epsilon)\preccurlyeq \epsilon^{-1/\lambda}. $
Thus, $\limsup_{\epsilon\to 0}\epsilon^{1/\lambda}P(\epsilon)$ and $\liminf_{\epsilon\to 0}\epsilon^{1/\lambda}P(\epsilon)$ are finite. 
Now, for $0<\delta<1$ we have that
\begin{align*}
&\pr\left(\sup_{0\le  t\le 1}\frac{1}{t^{\alpha}}\Big| W_{\lambda}(t) \Big|< \epsilon\right) 
\ge  \pr\left(\sup_{0\le  t\le \delta}\frac{1}{t^{\alpha}}\Big| W_{\lambda}(t) \Big|< \epsilon, 
\sup_{\delta\le t\le 1}\Big| W_{\lambda}(t) \Big|<  \delta^{|\alpha|}\epsilon\right)\\  
= &\pr\left(\sup_{0\le  t\le \delta}\frac{1}{t^{\alpha}}\Big| W_{\lambda}(t) \Big|< \epsilon, \right. \\
&\qquad \left.\sup_{\delta\le t\le 1}\left|\int_{\delta}^t \frac{(t-s)^{\lambda-1/2}}{\Gamma\big(\lambda+1/2\big)}d B(s)+\int_{0}^{\delta} \frac{(t-s)^{\lambda-1/2}}{\Gamma\big(\lambda+1/2\big)}d B(s)\right|<\delta^{|\alpha|}\epsilon\right).
\end{align*}
By Lemma \ref{lem:6}, 
$$ \lim_{\epsilon\to 0}\epsilon^{1/\lambda}\log\pr\left(\sup_{\delta\le t\le 1}\left|\int_{0}^{\delta} \frac{(t-s)^{\lambda-1/2}}{\Gamma\big(\lambda+1/2\big)}d B(s)\right|<\epsilon\right)=0. $$
It is easily checked that $\sup_{0\le t\le \delta}t^{-\alpha}|W_{\lambda}(t)|\overset{\mathcal{D}}=\delta^{\lambda-\alpha}\sup_{0\le t\le 1}t^{-\alpha}|W_{\lambda}(t)|$ and 
$$ \sup_{\delta\le t\le 1}\left|\int_{\delta}^t \frac{(t-s)^{\lambda-1/2}}{\Gamma\big(\lambda+1/2\big)}d B(s)\right|
\overset{\mathcal{D}}= \sup_{0\le t\le 1-\delta}\left|W_{\lambda}(t)\right|\overset{\mathcal{D}}=
(1-\delta)^{\lambda} \sup_{0\le t\le 1}\left|W_{\lambda}(t)\right|. $$

By Lemma \ref{lem:ChungLIL1}, \eqref{eq:constant:1} and the independence,
\begin{align*}
&\liminf_{\epsilon\to 0}\epsilon^{1/\lambda}P(\epsilon)\\
\ge &\liminf_{\epsilon\to 0}\epsilon^{1/\lambda}\log \pr\left(\sup_{0\le  t\le \delta}\frac{1}{t^{\alpha}}\Big| W_{\lambda}(t) \Big|< \epsilon, \sup_{\delta\le t\le 1}\left|\int_{\delta}^t \frac{(t-s)^{\lambda-1/2}}{\Gamma\big(\lambda+1/2\big)}d B(s)\right|<\delta^{|\alpha|}\epsilon\right)\\
= &\liminf_{\epsilon\to 0}\epsilon^{1/\lambda}\log\left\{\pr\left(\sup_{0\le  t\le \delta}\frac{1}{t^{\alpha}}\Big| W_{\lambda}(t) \Big|< \epsilon\right) \pr\left(\sup_{\delta\le t\le 1}\left|\int_{\delta}^t \frac{(t-s)^{\lambda-1/2}}{\Gamma\big(\lambda+1/2\big)}d B(s)\right|<\delta^{|\alpha|}\epsilon\right)\right\}\\
=& \liminf_{\epsilon\to 0}\epsilon^{1/\lambda}P(\delta^{\alpha-\lambda}\epsilon)
+\lim_{\epsilon\to 0}\epsilon^{1/\lambda} \log\pr\left( \sup_{0\le t\le 1}\Big|W_{\lambda}(t) \Big|< \frac{\delta^{|\alpha|}\epsilon}{(1-\delta)^{\lambda}}\right)  \\
=& \delta^{1-\alpha/\lambda}\liminf_{\epsilon\to 0}\epsilon^{1/\lambda}P(\epsilon)
-\kappa_{\lambda}\frac{1-\delta}{\delta^{|\alpha|/\lambda}}.
\end{align*} 
 We conclude that
$$ \liminf_{\epsilon\to 0}\epsilon^{2/\lambda}P(\epsilon)\ge -\kappa_{\lambda}\frac{1-\delta}{\delta^{|\alpha|/\lambda}(1-\delta^{1-\alpha/\lambda})}\to -\frac{\kappa_{\lambda}}{1-\alpha/\lambda}
\text{
as }  \delta\to 1. $$ 

On the other hand, 
for $0<\delta<1$,  
\begin{align*}
&\pr\left(\sup_{0\le  t\le 1}\frac{1}{t^{\alpha}}\Big| W_{\lambda}(t) \Big|< \epsilon\right) 
\le  \pr\left(\sup_{0\le  t\le \delta}\frac{1}{t^{\alpha}}\Big| W_{\lambda}(t) \Big|< \epsilon, 
\sup_{\delta\le t\le 1}\Big| W_{\lambda}(t) \Big|<  \delta^{-|\alpha|}\epsilon\right).
\end{align*}
With the same argument above, we have that 
\begin{align*}
& \limsup_{\epsilon\to 0}\epsilon^{1/\lambda}P(\epsilon)\\
\le & \limsup_{\epsilon\to 0}\epsilon^{1/\lambda}P(\delta^{\alpha-\lambda}\epsilon)+
\lim_{\epsilon\to 0}\epsilon^{1/\lambda} \log\pr\left( \sup_{0\le t\le 1}\Big|W_{\lambda}(t)\Big|< \frac{\delta^{-|\alpha|}\epsilon}{(1-\delta)^{\lambda}}\right)\\
=& \delta^{1-\alpha/\lambda}\limsup_{\epsilon\to 0}\epsilon^{1/\lambda}P(\epsilon)- \kappa_{\lambda}(1-\delta)\delta^{|\alpha|/\lambda}.
\end{align*} 
Thus
$$ \limsup_{\epsilon\to 0}\epsilon^{1/\lambda}P(\epsilon)\le -\kappa_{\lambda}
\frac{(1-\delta)\delta^{|\alpha|/\lambda}}{1-\delta^{1-\alpha/\lambda}} \to -\frac{\kappa_{\lambda}}{1-\alpha/\lambda}
\;\text{
as } \; \delta\to 1. $$ 
\eqref{eq:smallballofW} is proven.  

Next, we consider $I_{\gamma}(B_H)$.   Let  $0<H<1$, and $Z_H(t)$ be defined as \eqref{eqdefZH}. By Lemma 4.3 of Lifshits and Linde \cite{LifshitsLinde2005}, 
 $$ -\log \pr\left(\sup_{0\le t\le 1}\left| Z_H(t)\right|< \epsilon\right)
\preccurlyeq   \epsilon^{-\beta}, \forall \beta>0. $$
When $\gamma>0$, let $K(t,s)=t^{-\alpha}(t-s)^{\gamma-1}I\{0\le s\le t\}$.  Then, for $0<\delta<1$,
$$ \int_0^1\left|K(t^{\prime\prime},s)-K(t^{\prime},s)\right| ds \le \begin{cases} c_{\alpha,\delta,\gamma}|t^{\prime\prime}-t^{\prime}|, & \gamma\ge 1,\\
c_{\alpha,\delta,\gamma}|t^{\prime\prime}-t^{\prime}|^{\gamma}, & 0<\gamma\le 1,
\end{cases}\;\; t^{\prime},t^{\prime\prime}\in [\delta,1].$$
Thus, by Lemma \ref{lem:kernel},
\begin{align*}
&  -\log \pr\left(\sup_{\delta\le t\le 1}t^{-\alpha}\left|\int_0^t(t-s)^{\gamma-1} Z_H(s)ds\right|< \epsilon\right)  
\preccurlyeq   \epsilon^{-\frac{\beta}{\beta*(\gamma\wedge 1)+1}}, \forall \beta>0. 
\end{align*}
  On the other hand, $t^{-\alpha}\int_0^t(t-s)^{\gamma-1} Z_H(s)ds$ is a continuous,  centered, self-similar Gaussian process of index $H+\gamma-\alpha$. By  \eqref{eq:selfsimilar.2} of Lemma \ref{lem:selfsimilar},
 \begin{equation}\label{eq:smallbofZ} -\log \pr\left(\sup_{0\le t\le 1}t^{-\alpha}\left|\int_0^t(t-s)^{\gamma-1} Z_H(s)ds\right|< \epsilon\right) 
\preccurlyeq   \epsilon^{-\beta}, \forall \beta>0 \text{ and } \alpha<H+\gamma. 
\end{equation} 
Notice that 
 $$ I_{\gamma}B_H(t)=a_H\big[I_{\gamma}(W_H)(t)+I_{\gamma}(Z_H)(t)\big], \; t\ge 0. $$
 and 
 $$ I_{\gamma}(W_H)(t)dt=W_{H+\gamma}(t). $$
 By     Lemma  \ref{lem:ChungLIL1} and \eqref{eq:smallballofW}, \eqref{eq:lem:smallballofFB:1} holds. $\Box$
 
 \bigskip
 Now, we give the proof of the small ball probabilities given by \eqref{eq:th:1:1:1}.

{\bf Proof of \eqref{eq:th:1:1:1}.}   \eqref{eq:th:1:1:1}    holds  for $\bm\alpha=\bm 0$ by Proposition \ref{prop:1}. By Lemma \ref{lem:ChungLIL1},  it is sufficient to show   that
\begin{align}\label{eq:proof:th:2:1}
-\log \pr& \left(\sup_{0\le t\le 1} \left|t^{-(\alpha-\alpha_1-\cdots-\alpha_m)} J_{m,\bm\alpha}(X)(t)-t^{-\alpha}  I_{m}(X)(t) \right|<\epsilon\right)\nonumber\\
&\preccurlyeq \epsilon^{-1/(H+\gamma+m+1)},\;\;\text{ for }  X=I_{\gamma}B_H \text{ or } W_{H+\gamma},
\end{align} 
for all $\gamma\ge 0$, $\alpha_1+\cdots+\alpha_i<H+\gamma+i$, $i=1,\ldots,m$, and $\alpha<H+\gamma+m$.

Let $X=I_{\gamma}B_H$ or $W_{H+\gamma}$. Notice that $X$   is a continuous, centered, self-similar Gaussian process of index $\tau=H+\gamma$  and
$$ -\log\pr\left(\sup_{0\le t\le 1}|X(t)|<\epsilon\right) \preccurlyeq \epsilon^{-1/(H+\gamma)}, $$
by   Proposition \ref{prop:1}.  Lemma \ref{lem:selfsimilar} (iii) applies to $X$  and gives \eqref{eq:proof:th:2:1}.   
The proof is completed. 
 $\Box$ 
  
\subsection{Small ball probabilities under the weighted $L^q$-norm} 
The $\sup$-norm in the equation \eqref{eq:th:1:1:1} can be replaced by the $L^q$-norm.
Let $0< q\le \infty$. For a function $w(t)$ defined  on the interval $(0,\infty)$ and a subinterval $I\subset (0,\infty)$,   denote the norms as follows: $\|x\|_I=\sup_{t\in I}|x(t)|$, 
$$ \|w\|_{L^q(I)}=\left(\int_I |w(t)|^qdt\right)^{1/q}  \text{if } 0< q<\infty \text{ and } \|w\|_{L^q(I)}=\underset{t\in I}{\text{ess sup}} |w(t)| \text{ if } q= \infty. $$
When $\sup$-norm  is replaced by  the $L^q$-norm, $q\ge 1$,  the constant $\kappa_{\lambda}$ in \eqref{eq:constant:1} should be  substituted with:
\begin{align}\label{eq:constant:2} \kappa(\lambda,q)=-\lim_{\epsilon\to 0}\epsilon^{1/\lambda}\log \pr\left(\|W_{\lambda}\|_{L^q[0,1]}<\epsilon\right), \;\; \kappa(\lambda,\infty)=\kappa_{\lambda}.  
\end{align} 

Lifshits and Linde \cite{LifshitsLinde2005} obtained  the following small ball probability for a  weighted fractional Browning motion with a general weight $w(t)$: 
\begin{align}\label{eq:Lifshits:1} \lim_{\epsilon\to 0}\epsilon^{1/H}\log \pr\left(\|wB_H\|_{L^q(0,\infty)}<a_H\epsilon\right)=-\kappa(H,q)\|w\|_{L^r(0,\infty)}^{1/H}, 
\end{align}
provided  $q\ge 1$, $\|w\|_{r,H,q}<\infty$ and $\frac{1}{r}=H+\frac{1}{q}$.  When $q=\infty$, $w$ in right hand   of \eqref{eq:Lifshits:1} has to be replaced by its regularization $w^{\ast}$ defined by  
\begin{align}\label{eq:th:3.1:2} w^{\ast}(s)=\lim_{\delta\to 0}  \underset{\{x:|x-s|<\delta\}}{\text{ess sup}}  |w(x)|.
\end{align}
Here and in the sequel,   $\|w\|_{r,\tau,q}$  is defined as 
\begin{align}\label{eq:normofweight} \|w\|_{r,\tau,q}=\left(\sum_{k=-\infty}^{\infty}2^{kr\tau} \|w\|^r_{L^q(2^{k-1},2^k]} \right)^{1/r}, \;\; 1\le q\le \infty, 0<r,\tau<\infty.
\end{align}
 This  is a norm defined by Lifshits and Linde \cite{LifshitsLinde2005}. When $\frac{1}{r}=\tau+\frac{1}{q}$, $\|w\|_{L^r(0,\infty)}\le \|w\|_{r,\tau,q}$. 
 
 In this subsection, we present a similar general result for $J_{m,\bm\alpha}(I_{\gamma}B_H)(t)$ by applying  the well-known conjecture of the Gaussian correlation inequality. This conjecture states that for any centered Gaussian random element $ X $ and any symmetric, convex sets $ A $ and $ B $ in a separable Banach space $ E$, the following inequality holds:
\begin{align}\label{eq:GaussCor} \pr( X\in A\cap B)\ge \pr (X\in   A) \pr (X\in   B). 
\end{align}
The Gaussian correlation inequality conjecture has been proven by Royen \cite{Royen2014} (c.f. Lata\l a and Matak \cite{LatalaMatlak}). The relation \eqref{eq:GaussCorWeak} serves as a weaker form of \eqref{eq:GaussCor}.
  
\begin{theorem} \label{th:3.1}Let $m\ge 0$, $\gamma\ge 0$, $\alpha_1+\cdots+\alpha_i<H+i+\gamma$, $i=1,\ldots,m$,  $\tau=H+m+\gamma$. Suppose  $q\ge 1$, $\frac{1}{r}=H+m+\gamma+\frac{1}{q}$, and  $\|w\|_{r,\tau,q}<\infty$. Then
\begin{align}\label{eq:th:3.1:1}
 &\lim_{\epsilon\to 0}  \epsilon^{1/\tau}\log \pr\left(\left\|w(t)t^{\alpha_1+\cdots+\alpha_m}  J_{m,\bm\alpha}(I_{\gamma}B_H)(t) \right\|_{L^q(0,\infty)}<a_H \epsilon\right)\nonumber\\
 =&\lim_{\epsilon\to 0}  \epsilon^{1/\tau}\log \pr\left(\left\|w(t)t^{\alpha_1+\cdots+\alpha_m}  J_{m,\bm\alpha}(W_{H+\gamma})(t) \right\|_{L^q(0,\infty)}< \epsilon\right)\nonumber\\
=& \lim_{\epsilon\to 0} \epsilon^{1/\tau}\log \pr\left(\left\|wW_{\tau}\right\|_{L^q(0,\infty)}< \epsilon\right) 
= -\kappa(\tau,q)\|w\|_{L^r(0,\infty)}^{1/\tau}, 
\end{align}
where $\kappa(\lambda,q)$ is defined as \eqref{eq:constant:2}.  When $q=\infty$, we assume that $w$  is almost everywhere continuous.  
\end{theorem}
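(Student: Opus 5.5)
The plan is to reduce everything to the third quantity, $\|wW_\tau\|_{L^q(0,\infty)}$, whose small ball constant comes from the Lifshits--Linde method. Three ingredients are needed: the Lifshits--Linde type result for the Riemann--Liouville process $W_\tau$ with a general weight; a reduction from $J_{m,\bm\alpha}(W_{H+\gamma})$ to $I_m(W_{H+\gamma})=W_\tau$; and a reduction from $I_\gamma B_H$ to $a_H W_{H+\gamma}$.

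\textbf{Step 1: the last equality.} I would prove
$$\lim_{\epsilon\to0}\epsilon^{1/\tau}\log\pr(\|wW_\tau\|_{L^q(0,\infty)}<\epsilon)=-\kappa(\tau,q)\|w\|_{L^r(0,\infty)}^{1/\tau}$$
by repeating the Lifshits--Linde argument behind \eqref{eq:Lifshits:1}, with $B_H$ replaced by $W_\tau$. The structure is as follows.
\begin{itemize}
\item For step weights $w=\sum_j c_j 1_{(t_{j-1},t_j]}$, split $W_\tau$ on each block into an independent Riemann--Liouville piece $\int_{t_{j-1}}^t(t-s)^{\tau-1/2}dB(s)$ plus a ``past'' part.
\item The past part is negligible on the $\epsilon^{-1/\tau}$ scale. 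This is the analogue of Lemma \ref{lem:6}, which was proved for the sup-norm and therefore also holds for $L^q$ norms on bounded intervals.
\item Combine the independent block pieces via Lemma \ref{lem:ChungLIL1} and the Gaussian correlation inequality \eqref{eq:GaussCor}. By self-similarity and \eqref{eq:constant:2}, the exponents add to $\kappa(\tau,q)\sum_j(|c_j|(t_j-t_{j-1})^{\tau+1/q})^{1/\tau}=\kappa(\tau,q)\|w\|_{L^r}^{1/\tau}$, since $r(\tau+1/q)=1$.
\end{itemize}
For general $w$ with $\|w\|_{r,\tau,q}<\infty$, approximate by step functions, bounding the tails near $0$ and $\infty$ by the dyadic norm \eqref{eq:normofweight}. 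Lemma \ref{lem:selfsimilar}-type self-similar scaling on each dyadic block $(2^{k-1},2^k]$, combined through \eqref{eq:GaussCor}, makes those tail contributions arbitrarily small. When $q=\infty$, use the almost-everywhere continuity of $w$ to pass to $w^*=|w|$ a.e.

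\textbf{Step 2: removing the weights $\bm\alpha$.} Since $I_m(W_{H+\gamma})=W_\tau$, it suffices by Lemma \ref{lem:ChungLIL1} to show that the difference
$$D(t)=t^{\alpha_1+\cdots+\alpha_m}J_{m,\bm\alpha}(X)(t)-I_m(X)(t)$$
has negligible small ball rate in the weighted $L^q$ norm, for $X=W_{H+\gamma}$ or $I_\gamma B_H$. Lemma \ref{lem:selfsimilar}(iii), applied with $\alpha=0$, gives
$$-\log\pr\big(\sup_{[0,1]}|D|<\epsilon\big)\preccurlyeq\epsilon^{-1/(\tau+1)},$$
which is $o(\epsilon^{-1/\tau})$. $D$ is self-similar of index $\tau$, so the same bound holds on every $[0,2^k]$ after scaling. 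To pass to $\|wD\|_{L^q(0,\infty)}$, decompose into dyadic blocks and use \eqref{eq:GaussCor} to multiply the block probabilities, allotting the radius $\epsilon_k=\epsilon\,\theta_k$ with $\sum_k\theta_k\le1$ weighted by $2^{k\tau}\|w\|_{L^q(2^{k-1},2^k]}$. The finiteness of $\|w\|_{r,\tau,q}$ makes the summed exponent $o(\epsilon^{-1/\tau})$.

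\textbf{Step 3: removing $Z_H$.} Write $I_\gamma B_H=a_H(W_{H+\gamma}+I_\gamma Z_H)$. By \eqref{eq:smallbofZ} and Lemma \ref{lem:selfsimilar}, the process $J_{m,\bm\alpha}(I_\gamma Z_H)$ has small ball rate $\preccurlyeq\epsilon^{-\beta}$ for every $\beta>0$. The same dyadic argument as in Step 2 makes it negligible, and Lemma \ref{lem:ChungLIL1} then gives the first equality.

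\textbf{Main obstacle.} I expect the hardest step to be the dyadic summation in Steps 2 and 3 near $t=0$ and $t=\infty$. There the weight may be singular, and the per-block estimates must be summed uniformly, so the choice of $\theta_k$ has to balance the $\epsilon^{-1/(\tau+1)}$ rates against the $\ell^r$ summability of the block norms. My first try is $\theta_k\propto\big(2^{k\tau}\|w\|_{L^q(2^{k-1},2^k]}\big)^{r/(1+\tau)}$ together with Hölder's inequality.
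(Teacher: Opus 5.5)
Your architecture matches the paper's. The paper sets $X=a_H^{-1}t^{\alpha_1+\cdots+\alpha_m}J_{m,\bm\alpha}(I_\gamma B_H)-W_\tau$ (or the $W_{H+\gamma}$ analogue). It gets $-\log\pr(\|X\|_{[0,1]}<\epsilon)\preccurlyeq\epsilon^{-1/(\tau+1)}$ from \eqref{eq:smallbofZ} and Lemma \ref{lem:selfsimilar}(iii) with $\alpha=0$, which does your Steps 2 and 3 at once. It then transfers this to $\|w\,\cdot\|_{L^q(0,\infty)}$ by Lemma \ref{lem:3.6} with $c_0=0$, concludes with Lemma \ref{lem:ChungLIL1}, and treats $W_\tau$ by the Lifshits--Linde step-function argument. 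However, two of your steps fail as written when $q<\infty$.

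\textbf{Step 1: the block exponents do not add.} Put $\ell_j=t_j-t_{j-1}$ and $x_j=|c_j|^r\ell_j$. Since $r(\tau+1/q)=1$, we have $(|c_j|\ell_j^{\tau+1/q})^{1/\tau}=x_j^{1+1/(q\tau)}$, so your constant is $\kappa(\tau,q)\sum_jx_j^{1+1/(q\tau)}$. But $\|w\|_{L^r}^{1/\tau}=(\sum_jx_j)^{1+1/(q\tau)}$, which is strictly larger once two blocks carry nonzero weight. Adding exponents computes the rate of $\{\max_j|c_j|\|R_j\|_{L^q}<\epsilon\}$, an event that contains the one you need. Since $\|\sum_jc_jR_j\|^q_{L^q}=\sum_j|c_j|^q\|R_j\|^q_{L^q}$, the lower bound requires splitting the budget as $\epsilon^qu_j$ with $\sum_ju_j=1$ (optimally $u_j\propto x_j$). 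The matching upper bound needs a further argument that uses independence and optimizes over all splits. That is the small deviation asymptotics for sums of independent nonnegative variables behind (4.14) of Lifshits--Linde, which the paper invokes. Neither \eqref{eq:GaussCor} nor Lemma \ref{lem:ChungLIL1} supplies this. Your additive formula is correct only for $q=\infty$, where the norm is a maximum and $r=1/\tau$.

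\textbf{Step 2: the radius allocation is too lossy.} The same applies to the dyadic tail estimate you need in Step 1. Let $\Delta_k=(2^{k-1},2^k]$, $a_k=2^{k\tau}\|w\|_{L^q(\Delta_k)}$ and $f(x)=-\log\pr(\|D\|_{[0,1]}<x)$. The constraint $\sum_k\theta_k\le1$ amounts to the triangle inequality across blocks. The bound it produces is of order $\epsilon^{-1/\tau}\sum_k(a_k/\theta_k)^{1/\tau}$, and H\"older gives $\sum_k(a_k/\theta_k)^{1/\tau}\ge(\sum_ka_k^{1/(1+\tau)})^{(1+\tau)/\tau}$. So you would need $\sum_ka_k^{1/(1+\tau)}<\infty$. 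For $q>1$ this is strictly stronger than $\|w\|_{r,\tau,q}<\infty$, because $r=1/(\tau+1/q)>1/(\tau+1)$. Your choice $\theta_k\propto a_k^{r/(1+\tau)}$ need not even be summable.

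The fix is to use disjointness: $\|wD\|^q_{L^q(0,\infty)}\le\sum_k\|w\|^q_{L^q(\Delta_k)}\|D\|^q_{\Delta_k}$ (a maximum if $q=\infty$). Give block $k$ the radius $\epsilon\lambda_k^{1/q}$ with $\lambda_k=a_k^r$, normalized so that $\sum_ka_k^r=1$. Since $r/q-1=-r\tau$, the block-$k$ argument is $\epsilon a_k^{-r\tau}$. Its cost is at most $\eta(\epsilon a_k^{-r\tau})^{-1/\tau}=\eta\,\epsilon^{-1/\tau}a_k^r$ whenever $\epsilon a_k^{-r\tau}\le\epsilon_0$. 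For the remaining blocks the asymptotic $f(x)=o(x^{-1/\tau})$ says nothing. There you need the uniform bound $f(x)\le Cx^{-1/\tau}$ for $x>\epsilon_0$, which comes from Gaussian concentration, \eqref{eq:proof:lem:3.6:1}. These blocks then contribute at most $C\epsilon^{-1/\tau}\sum_{k:\,a_k^{r\tau}<\epsilon/\epsilon_0}a_k^r=o(\epsilon^{-1/\tau})$. This is precisely the proof of Lemma \ref{lem:3.6}.
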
 

To prove Theorem \ref{th:3.1}, we need the following lemma on the small ball probabilities of a self-similar Gaussian process under the weighted $L^q$-norm, which is based on the Gaussian correlation inequality \eqref{eq:GaussCor}.
\begin{lemma}\label{lem:3.6}  Let $  X(t)$ be a continuous, centered, self-similar Gaussian process of index $\tau>0$.
Suppose  that
\begin{equation}\label{eq:lem:3.6:1}-\log \pr\Big(\sup_{0\le t\le 1} \left|  X(t)\right|<\epsilon\Big)\le c_0\epsilon^{-\beta}+o(\epsilon^{-\beta}) \;\text{ as } \epsilon\to 0,  
\end{equation} 
where $0<\beta<\infty$.  Then
\begin{equation}\label{eq:lem:3.6:2}-\log \pr\left( \left \|w X\right \|_{L^q(0,\infty)}<\epsilon\right)\le 
\begin{cases} C\|w\|_{r,\tau,q}^{\beta}\cdot \epsilon^{-\beta}, & \text{for all }\epsilon>0, \\
 c_0\|w\|_{r,\tau,q}^{\beta}\cdot \epsilon^{-\beta}+o(\epsilon^{-\beta})  &\text{as } \epsilon\to 0,
 \end{cases}
\end{equation}
where $\frac{1}{r}=\frac{1}{\beta}+\frac{1}{q}$ and the constant $C$ does not depend on $w$. 
\end{lemma}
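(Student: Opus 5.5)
We reduce everything to the dyadic blocks $(2^{k-1},2^k]$ and combine the blocks with the Gaussian correlation inequality \eqref{eq:GaussCor}. Write $w_k=\|w\|_{L^q(2^{k-1},2^k]}$ and $a_k=2^{k\tau}w_k$, so that $\|w\|_{r,\tau,q}=\|(a_k)\|_{\ell^r}$. On each block,
$$\|wX\|_{L^q(2^{k-1},2^k]}\le w_k\sup_{0\le t\le 2^k}|X(t)|\overset{\mathcal D}{=}a_k\sup_{0\le t\le 1}|X(t)|.$$
Now choose nonnegative numbers $\epsilon_k$ with $\sum_k\epsilon_k^q\le\epsilon^q$ (take $\max_k\epsilon_k\le\epsilon$ when $q=\infty$). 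The event $\{\|wX\|_{L^q(0,\infty)}<\epsilon\}$ then contains $\bigcap_k\{\|wX\|_{L^q(2^{k-1},2^k]}<\epsilon_k\}$. Each of these sets is symmetric and convex in the path space. Royen's inequality \eqref{eq:GaussCor}, applied first to finitely many blocks and then passed to the limit by monotone continuity, gives
$$-\log\pr\big(\|wX\|_{L^q(0,\infty)}<\epsilon\big)\le\sum_k\phi(\epsilon_k/a_k),\qquad \phi(x)=-\log\pr\Big(\sup_{0\le t\le1}|X(t)|<x\Big).$$
Blocks with $a_k=0$ contribute nothing.

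Next we optimize the allocation. By \eqref{eq:lem:3.6:1} there is a constant $C$ with $\phi(x)\le Cx^{-\beta}$ for $x\le1$. For $x\ge1$, $\phi$ is small, namely $\phi(x)\le -\log(1-\pr(\sup|X|\ge x))\le C' e^{-cx^2}$ by Gaussian concentration (Fernique/Borell). This gives $\phi(x)\le C x^{-\beta}$ for all $x>0$. Take
$$\epsilon_k=\epsilon\,a_k^{\theta}\Big/\Big(\sum_j a_j^{\theta q}\Big)^{1/q},$$
with $\theta$ chosen to minimize $\sum_k(a_k/\epsilon_k)^{\beta}$ under the $\ell^q$ constraint. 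The Lagrange condition gives $\theta q=\beta r$ up to normalization, i.e.\ $\epsilon_k\propto a_k^{r/q}$, so $a_k/\epsilon_k\propto a_k^{1-r/q}=a_k^{r/\beta}$ because $1/r=1/\beta+1/q$. A direct Hölder computation then yields
$$\sum_k (a_k/\epsilon_k)^\beta=\epsilon^{-\beta}\Big(\sum_k a_k^r\Big)^{\beta/q+\beta/r-\ldots}=\epsilon^{-\beta}\|a\|_{\ell^r}^{\beta}.$$
This is the first bound, with constant $C$ independent of $w$.

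For the sharp constant we use the same allocation and split the sum. Fix $\eta>0$ and pick $x_0$ with $\phi(x)\le(c_0+\eta)x^{-\beta}$ for $x\le x_0$. As $\epsilon\to0$, the blocks with $\epsilon_k/a_k\le x_0$ contribute at most $(c_0+\eta)\epsilon^{-\beta}\|a\|_{\ell^r}^\beta$. On the remaining blocks $\epsilon_k/a_k>x_0$, which under our allocation forces $a_k$ to be small, namely $a_k^{r/\beta}\lesssim\epsilon$. Using $\phi(x)\le Cx^{-\beta}$, their contribution is at most $C\epsilon^{-\beta}\sum_{k:\,a_k\le c\epsilon^{\beta/r}}a_k^r=o(\epsilon^{-\beta})$, since the tail of a convergent series vanishes. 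Letting $\eta\to0$ gives the second line of \eqref{eq:lem:3.6:2}.

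The main obstacle is the uniform estimate $\phi(x)\le Cx^{-\beta}$ for large $x$. We need it so that infinitely many blocks sum to something finite. It must be checked that $\sum_k\phi(\epsilon_k/a_k)$ converges even when infinitely many $\epsilon_k/a_k$ are large. This works because $\phi(x)\lesssim\min(x^{-\beta},e^{-cx^2})$, and because our allocation ensures that the large ratios come precisely from the summable tail of $(a_k^r)$.
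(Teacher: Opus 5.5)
Your proof is correct and follows the paper's argument essentially step by step. The shared steps are: dyadic blocks with $a_k=2^{k\tau}\|w\|_{L^q(2^{k-1},2^k]}$ and self-similarity; the repeated Gaussian correlation inequality with the allocation $\epsilon_k\propto a_k^{r/q}$ (the paper's $\lambda_k^{1/q}\epsilon$ with $\lambda_k=a_k^r$ after normalizing $\|w\|_{r,\tau,q}=1$); a uniform bound $\phi(x)\le Cx^{-\beta}$ obtained from Gaussian concentration for large $x$; and the split into small-ratio blocks, which give the constant $c_0+\eta$, and a vanishing tail of $\sum_k a_k^r$. The only blemish is your garbled ``H\"older'' line, which is really just the direct identity $\sum_k(a_k/\epsilon_k)^\beta=\epsilon^{-\beta}\|a\|_{\ell^r}^{r(1+\beta/q)}=\epsilon^{-\beta}\|a\|_{\ell^r}^{\beta}$.
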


{\bf Proof.} Without loss of generality, we suppose  $\|w\|_{r,\tau,q}=1$.  Write  $\Delta_k=(2^{k-1},2^k]$. By \eqref{eq:lem:3.6:1}, for any $C_1>c_0$, there exists an $\epsilon_0\in (0,1)$ such that 
$$-\log \pr\Big(\|X\|_{[0,1]}<\epsilon\Big)\le C_1\epsilon^{-\beta}, \;\; 0<\epsilon\le \epsilon_0. $$
On the other hand, by the isoperimetric concentration inequality (c.f. Lemma 3.1  of Ledoux and Talagrand \cite{LT1991}) and the elememary inequality that $-\log (1-x)\le 2 x$ ($1/2\le x\le 1$), we have that  
\begin{align*}
-\log \pr & \Big(\|X\|_{[0,1]}<\epsilon\Big)\le   2\pr\Big(\|X\|_{[0,1]}\ge \epsilon\Big)\le 2\pr\Big(\|X\|_{[0,1]}-\text{med}(\|X\|_{[0,1]})\ge \epsilon/2\Big)\\
&\le   2\exp\left\{-\frac{\epsilon^2}{8\sup_{0\le t\le 1}\ep X^2(t)}\right\}= 2\exp\left\{-\frac{\epsilon^2}{8\ep X^2(1)}\right\},\;\; \epsilon\ge 2 \text{med}(\|X\|_{[0,1]}), 
\end{align*}
where $\text{med}(\|X\|_{[0,1]})$ is the median of $\|X\|_{[0,1]}$. Thus, for any $\beta_0\le \beta$ there exists an constant $C_{\beta_0,\epsilon_0}>0$ such that
\begin{equation}\label{eq:proof:lem:3.6:1}
-\log \pr\Big(\|X\|_{[0,1]}<\epsilon\Big)\le C_{\beta_0,\epsilon_0}\epsilon^{-\beta_0}, \;\;  \epsilon> \epsilon_0. 
\end{equation}
Notice that $\|w\|_{L^q(\Delta_k)}\|X\|_{\Delta_k}\overset{d}=\|w\|_{L^q(\Delta_k)}2^{k\tau}\|X\|_{[1/2,1]}$
and 
$$\|wX\|_{L^q(0,\infty)}\le \begin{cases} \Big(\sum_{k=-\infty}^{\infty} \|w\|_{L^q(\Delta_k)}^q\|X\|_{\Delta_k}^q\Big)^{1/q}, & 1\le q<\infty, \\
\max_k \|w\|_{L^q(\Delta_k)}\|X\|_{\Delta_k}, & q=\infty. \end{cases}
$$
Without loss of generality, we assume that $\|w\|_{L^q(\Delta_k)}\ne 0$ for all $k$.  Write $a_k=\|w\|_{L^q(\Delta_k)}2^{k\tau}$, $\lambda_k=a_k^r$. Then $\sum_k\lambda_k=\|w\|_{r,\tau,q}^r=1$. By repeating the Gaussian correlation inequality \eqref{eq:GaussCor}, we have that
\begin{align*}
 -\log \pr & \Big(\|wX\|_{L^q(0,\infty)}<\epsilon\Big)\le  -\log \pr\Big(\bigcap_{k=-\infty}^{\infty} \Big\{\|w\|_{L^q(\Delta_k)}\|X\|_{\Delta_k}<\lambda_k^{1/q}\epsilon\Big\}\Big)\\
\le & -\sum_{k=-\infty}^{\infty}\log \pr\Big( \|w\|_{L^q(\Delta_k)}\|X\|_{\Delta_k}<\lambda_k^{1/q}\epsilon\Big) 
\le  -\sum_{k=-\infty}^{\infty}\log \pr\Big(\|X\|_{[0,1]}<\lambda_k^{1/q}a_k^{-1}\epsilon\Big)\\
\le &    C_1 \sum_{k:\;\epsilon\lambda_k^{1/q}a_k^{-1}\le \epsilon_0 } \big(\epsilon \lambda_k^{1/q}a_k^{-1}\big)^{-\beta}
+C_{\beta,\epsilon_0}  \sum_{k:\;\epsilon\lambda_k^{1/q}a_k^{-1}> \epsilon_0 } \big(\epsilon \lambda_k^{1/q}a_k^{-1}\big)^{-\beta}\\
\le &    C_1\epsilon^{-\beta}  \sum_{k=-\infty}^{\infty}a_k^r
+C_{\beta,\epsilon_0}\epsilon^{-\beta} \sum_{k:a_k^{r/\beta}< \epsilon/\epsilon_0 } a_k^r 
= \begin{cases} (C_1+C_{\beta,\epsilon_0})\epsilon^{-\beta}, & 
\epsilon>0, \\
 C_1\epsilon^{-\beta} +o\big( \epsilon^{-\beta}\big), & \epsilon\to 0,
\end{cases}
\end{align*}
since $\sum_{k=-\infty}^{\infty} a_k^r=1$ and $\sum_{k:a_k^{r/\beta}< \epsilon/\epsilon_0 } a_k^r\to 0$ as $\epsilon\to 0$.  $\Box$

\bigskip

{\bf Proof of Theorem \ref{th:3.1}}. Let
\begin{align*}
X(t)=& a_H^{-1}t^{\alpha_1+\cdots+\alpha_m}    J_{m,\bm\alpha}(I_{\gamma}B_H)(t)-W_{H+m+\gamma}(t) \\
& \text{ or } t^{\alpha_1+\cdots+\alpha_m}    J_{m,\bm\alpha}(W_{H+\gamma})(t)-W_{H+m+\gamma}(t) . 
\end{align*}
By \eqref{eq:smallbofZ} and \eqref{eq:proof:th:2:1}, 
\begin{equation}\label{eq:proof:th:3.1:0}-\log \pr\left(\|X\|_{[0,1]}< \epsilon\right)\preccurlyeq \epsilon^{-1/(H+m+1+\gamma)}= o\big(\epsilon^{-1/(H+m+\gamma)}\big).
\end{equation}
Notice that $X(t)$ is a continuous, centered, self-similar Gaussian process of index $\tau=H+m+\gamma$. Let $\beta=1/\tau$. Since $\|w\|_{r,\tau,q}<\infty$,  by applying Lemma \ref{lem:3.6} we obtain
$$-\log \pr\left(\|wX\|_{L^q(0,\infty)}< \epsilon\right)= o\big(\epsilon^{-1/(H+m+\gamma)}\big).
$$  
Thus, the first and the second equalities of \eqref{eq:th:3.1:1} holds. For the last equality, it is sufficient to show that
\begin{equation}\label{eq:proof:th:3.1:1}
 \lim_{\epsilon\to 0}\epsilon^{1/\lambda}\log\pr\Big(\|wW_{\lambda}\|_{L^q(0,\infty)}<\epsilon\Big)
=-\kappa(\lambda,q)\|w\|_{L^r(0,\infty)}^{1/\lambda}, 
\end{equation}
if $\|w\|_{r,\lambda,q}<\infty$ with $\frac{1}{r}=\lambda+\frac{1}{q}$, where, the function $w$   is assumed almost everywhere continuous when  $q=\infty$.

The proof of \eqref{eq:proof:th:3.1:1} is similar to that of Theorem 4.6 of Lifshits and Linde \cite{LifshitsLinde2005} where \eqref{eq:proof:th:3.1:1} is proved for $0<\lambda<1$. We   consider the case of $q<\infty$. Since $\|w\|_{r,\lambda,q}<\infty$, for any given $\delta > 0$ we may split $w$ into the sum of two   functions $w^{(1)}$ and $w-w^{(1)}$ such that $\|w-w^{(1)}\|_{r,\lambda,q}<\delta$, and $w^{(1)}\in L^q(\Delta)$ for a bounded and closed interval $\Delta\subset (0,\infty)$. For $w^{(1)}$, since $w^{(1)}\in L^q(\Delta)$, we also may split it into the sum of $w^{(2)}$ and $w^{(1)}-w^{(2)}$ such that $\|w^{(1)}-w^{(2)}\|_{r,\lambda,q}<\delta$, and that $w^{(2)}$ is an interval step function of the form
$$ w^{(2)}=\sum_{j=1}^m w_j \mathbbm{I}_{(s_j,s_{j+1}]}, \; s_1<s_2<\cdots<s_{m+1}$$
(c.f. Lemma 4.4 of Lifshits and Linde \cite{LifshitsLinde2002}). 
Here and in the sequel, $\mathbb{I}_{\Delta}$ is the indicator function of the set $\Delta$. 
\red{In fact, suppose $\Delta\subset (2^{-k_0}, 2^{k_0}]$. Since continuous functions are dense in $L^q(\Delta)$ and a continuous function on a closed interval can be uniformly approximated by an  interval step function, we can find an interval step function $w^{(2)}$ such that
$$ \|w^{(1)}-w^{(2)}\|_{L^q(\Delta)}^r<\delta^r/2^{k_0(r\lambda+2)}. $$
Then
$$\|w^{(1)}-w^{(2)}\|_{r,\lambda,q}^r\le \sum_{k=-k_0+1}^{k_0} 2^{kr\lambda}\delta^r/2^{k_0(r\lambda+2)} \le \delta^r. $$ 
} Thus,
$$ \|w-w^{(2)}\|_{L^r(0,\infty)}\le \|w-w^{(2)}\|_{r,\lambda,q}<2\delta. $$
By Lemma \ref{lem:3.6}, 
\begin{align*} & \liminf_{\epsilon\to 0}\epsilon^{1/\lambda}\log\pr\Big(\|(w-w^{(2)})W_{\lambda}\|_{L^q(0,\infty)}<\epsilon\Big) \\
\ge & -\kappa(\lambda,q)\|w-w^{(2)}\|_{r,\lambda,q}^{1/\lambda}>-\kappa(\lambda,q)(2\delta)^{1/\lambda}. 
\end{align*}
Notice by \eqref{eq:GaussCor}, for any $0<\lambda_0<1$,
\begin{align*}& \pr\Big(\|wW_{\lambda}\|_{L^q(0,\infty)}<\epsilon\Big)\ge   \pr\Big(\|(w-w^{(2)})W_{\lambda}\|_{L^q(0,\infty)}<\lambda_0\epsilon, \|w^{(2)}W_{\lambda}\|_{L^q(0,\infty)}<(1-\lambda_0) \epsilon\Big) \\
\ge &  \pr\Big(\|(w-w^{(2)})W_{\lambda}\|_{L^q(0,\infty)}<\lambda_0\epsilon\Big) \pr\Big(\|w^{(2)}W_{\lambda}\|_{L^q(0,\infty)}<(1-\lambda_0) \epsilon\Big) 
\end{align*}
and
\begin{align*}& \pr\Big(\|w^{(2)}W_{\lambda}\|_{L^q(0,\infty)}<  (1+\lambda_0)\epsilon\Big)
\ge   \pr\Big(\|(w-w^{(2)})W_{\lambda}\|_{L^q(0,\infty)}<\lambda_0\epsilon\Big) \pr\Big(\|wW_{\lambda}\|_{L^q(0,\infty)}<  \epsilon\Big). 
\end{align*}
Thus,  without loss of generality we can assume that $w=\sum_{j=1}^m w_j \mathbbm{I}_{(s_j,s_{j+1}]}$ is an interval step function. Then
$$wW_{\lambda}=w\widetilde{W}_{\lambda}+\sum_{j=1}^m w_jR_{j}^{\lambda}, $$ 
where 
$$ \widetilde{W}_{\lambda}(t)=\frac{1}{\Gamma(\lambda+1/2)}\int_0^{s_j}(t-s)^{\lambda-1/2}d B(s), \; t\in (s_j,s_{j+1}], $$
$$ R_{j}^{\lambda}(t)=\frac{1}{\Gamma(\lambda+1/2)}\int_{s_j}^{t}(t-s)^{\lambda-1/2}d B(s), \; t\in (s_j,s_{j+1}]. $$
Notice that $\|R_j^{\lambda}\|_{L^q(s_j,s_{j+1}]}\overset{d}=(s_{j+1}-s_j)^{\lambda+1/q}\|W_{\lambda}\|_{L^q(0,1]}$.
 We have
$$ \lim_{\epsilon\to 0} \epsilon^{1/\lambda} \log \pr\left(|w_j|\|R_j^{\lambda}\|_{L^q(s_j,s_{j+1}]}<\epsilon\right)
=-\kappa(\lambda,q) |w_j|^{1/\lambda}(s_{j+1}-s_j)^{1+1/(q\lambda)}. $$
By noticing that $\|R_j^{\lambda}\|_{L^q(s_j,s_{j+1}]}$, $j=1,\cdots,m$, are independent, with the same argument of (4.14) of Lifshits and Linde \cite{LifshitsLinde2005}, we have that
\begin{align*}
&\lim_{\epsilon\to 0}\epsilon^{1/\lambda}\log\pr\Big(\|\sum_{j=1}^m w_jR_{j}^{\lambda}\|_{L^q(0,\infty)}<\epsilon\Big)\\
=&\lim_{\epsilon\to 0}\epsilon^{1/\lambda}\log\pr\Big(\Big(\sum_{j=1}^m |w_j|^q\|R_{j}^{\lambda}\|_{L^q(s_j,s_{j+1}]}^q\Big)^{1/q}<\epsilon\Big)\\
=&-\kappa(\lambda,q)\Big(\sum_j |w_j|^{\frac{1/\lambda}{1+1/(q\lambda)}}(s_{j+1}-s_j)\Big)^{1+1/(q\lambda)}=-\kappa(\lambda,q)\|w\|_{L^r(0,\infty)}^{1/\lambda}.
\end{align*}
 
On the other hand, by Lemmas \ref{lem:6} and \ref{lem:ChungLIL1}, 
\begin{align*}
&-\log\pr\Big(\|w\widetilde{W}_{\lambda}\|_{L^q(0,\infty)}<\epsilon\Big)\\
\le & -\log\pr\Big( \max_{j\le m} |w_j|\sup_{s_j<t\le s_{j+1}} \Big|\frac{1}{\Gamma(\lambda+1/2)}\int_0^{s_j}(t-s)^{\lambda-1/2}d B(s)\Big|<\epsilon\Big) 
  =o(\epsilon^{-1/\lambda}).
\end{align*}
By Lemme \ref{lem:ChungLIL1} again, \eqref{eq:proof:th:3.1:1} holds. 

 \red{
 When $q=\infty$,   $ r=1/\lambda$.   Of course, it suffices to verify \eqref{eq:proof:th:3.1:1} for a weight function $w$ with a support on   a bounded and closed interval $\Delta\subset (0,\infty)$. As assumed, $w$ is  almost everywhere continuous and so $|w|^r$ is  Riemann integrable on $\Delta$. It follows that there are two   nonnegative  interval step functions $w^{(i)}=\sum_{j=1}^m w_j^{(i)} \mathbbm{I}_{(s_j,s_{j+1}]}$,  $i=1,2$, such that 
 $$ w^{(1)} \le |w|\le w^{(2)} $$
 and 
$$\|w\|_{L^r(\Delta)}^{r} -\delta\le  \|w^{(1)}\|_{L^r(\Delta)}^{r},\;\;  \|w^{(2)}\|_{L^r(\Delta)}^{r} \le \|w\|_{L^r(\Delta)}^{r} +\delta. $$ 
For the interval step functions, we have
\begin{align*}
&\lim_{\epsilon\to 0}\epsilon^{1/\lambda}\log\pr\Big(\|w^{(i)} W_{\lambda}\|_{L^{\infty}(\Delta)}<\epsilon\Big)=\lim_{\epsilon\to 0}\epsilon^{1/\lambda}\log\pr\Big(\|w^{(i)}  W_{\lambda}\|_{\Delta}<\epsilon\Big)\\
=&\lim_{\epsilon\to 0}\epsilon^{1/\lambda}\log\pr\Big(\|\sum_{j=1}^m w_j^{(i)} R_{j}^{\lambda}\|_{\Delta}<\epsilon\Big) 
= \lim_{\epsilon\to 0}\epsilon^{1/\lambda}\log\pr\Big(\max_{1\le j\le m}|w_j^{(i)} |\|R_{j}^{\lambda}\|_{(s_j,s_{j+1}]}<\epsilon\Big) \\
= &\lim_{\epsilon\to 0}\epsilon^{1/\lambda}\sum_{j=1}^m\log\pr\Big( |w_j^{(i)}|\|R_{j}^{\lambda}\|_{(s_j,s_{j+1}]}<\epsilon\Big) \\
=&-\kappa_{\lambda}\sum_{j=1}^m |w_j^{(i)}|^{1/\lambda}(s_{j+1}-s_j) 
= -\kappa_{\lambda}\|w^{(i)}\|_{L^r(\Delta)}^{r},  
\end{align*}
by the independence.  Thus \eqref{eq:proof:th:3.1:1} holds. }
The proof is now completed.  $\Box$.

\subsection{Small ball probabilities for the weighted integrals with a general weight}

  Duker,  Li and  Linde \cite{DLL2000}  showed 
  $$ -\log \pr\left(\sup_{0<t\le 1} \left|\int_0^tw(s)B(s)\right|<\epsilon\right) \approx \epsilon^{-2/3}, $$
  under suitable conditions placed  on the weight function $w$. 
In this subsection, we consider the precise small ball probability of  the weighted integrals of $J_{m,\bm\alpha}(I_{\gamma}B_H)(t)$ with a general weight function $w(t)$. 
\begin{theorem} \label{th:3.2}   Let $\Delta=(a,b)\subset (0,\infty)$,  $m\ge 0$, $\gamma\ge 0$, $\alpha_1+\cdots+\alpha_i<H+i+\gamma$, $i=1,\ldots,m$, and, $\tau=H+m+\gamma$.  Suppose   $q>1$,  
$\frac{1}{r}=\tau+1+\frac{1}{q}$,  and 
\begin{equation}\label{eq:th:3.2:cond}\int_0^t |w(s)|s^{\tau}ds<\infty\; \text {and }\; \|w\mathbbm{I}_{\Delta}\|_{r,\tau+1,q} =\left(\sum_{k=-\infty}^{\infty}2^{kr(\tau+1)} \|w\mathbbm{I}_{\Delta}\|^r_{L^q(2^{k-1},2^k]} \right)^{1/r}  <\infty.
\end{equation} 
Moreover, we assume that      $\Delta$ is bounded  when $1<q<\infty$. 
  Then
\begin{align}\label{eq:th:3.2:1}
 & \lim_{\epsilon\to 0}   \epsilon^{1/(\tau+1)}\log \pr\left(\left\|\int_0^tw(s)s^{\alpha_1+\cdots+\alpha_m}  J_{m,\bm\alpha}(I_{\gamma}B_H)(s) \right\|_{L^q(\Delta)}<a_H \epsilon\right)\nonumber\\
=& \lim_{\epsilon\to 0} \epsilon^{1/(\tau+1)}\log \pr\left(\left\|\int_0^tw(s)s^{\alpha_1+\cdots+\alpha_m} J_{m,\bm\alpha}(W_{H+\gamma})(s)ds\right\|_{L^q(\Delta)}< \epsilon\right) \nonumber\\
=&\lim_{\epsilon\to 0} \epsilon^{1/(\tau+1)}\log \pr\left(\left\|wW_{\tau+1}\right\|_{L^q(\Delta)}< \epsilon\right) 
=-\kappa(\tau+1,q)\|w\|_{L^r(\Delta)}^{1/(\tau+1)}, 
\end{align}
where $\kappa(\lambda,q)$ is defined as \eqref{eq:constant:2}.   In particular, 
$$ \lim_{\epsilon\to 0} \epsilon^{2/3}\log \pr\left(\sup_{t>0}\left|\int_0^tw(s)B(s) ds\right|< \epsilon\right) 
= -\kappa_{\frac{3}{2}} \int_0^{\infty}|w(s)|^{2/3}ds $$ 
whenever  $\sum_{k=-\infty}^{\infty}2^{k} \underset{t\in (2^{k-1},2^k]}{\text{ess sup}}|w(t)|^{2/3} <\infty$,  and
$$ \lim_{\epsilon\to 0} \epsilon^{2/3}\log \pr\left(\left\|\int_0^tw(s)B(s) ds\right\|_{L^2(0,1]}< \epsilon\right) 
= -\frac{3}{8} \left(\int_0^1|w(s)|^{1/2}ds\right)^{4/3} $$ 
whenever $\sum_{k=-\infty}^{0}2^{3k/4} \big(\int_{2^{k-1}}^{2^k}|w(t)|^2dt\big)^{1/4} <\infty$ and $\int_0^1|w(s)|s^{1/2}ds<\infty$. 
\end{theorem} 

\begin{remark} If $|w|^r$ is Riemann-integrable on $(0,\infty)$, and there exists a  constant $C\ge 0$ such that 
\begin{align}\label{eq:remark:3.1:1} |w(x)|\le C\big[|w(y/2)|+|w(2z)|\big] \text{ for all } x,y,z\in (2^{k-1},2^k], 
\end{align} 
when $k>0$ is large enough, and when $-k>0$ is large enough if $w$ is unbounded in a neighborhood of zero, then $\|w\|_{r,\tau+1,q}<\infty$.  In fact, it is sufficient to notice that
\begin{align*}
&2^{kr(\tau+1)}\|w\|_{L^q(\Delta_k)}^r\le 2^{kr(\tau+1)}\sup_{t\in \Delta_k} |w(t)|^r|\Delta_k|^{r/q}\\
\le & C2^{k}\big(\inf_{y\in \Delta_k}|w(y/2)|^r+\inf_{z\in \Delta_k}|w(2z)|^r\big)\le C\big(\|w\|_{L^r(\Delta_{k-1})}^r +\|w\|_{L^r(\Delta_{k+1})}^r\big), \end{align*}
where $\Delta_k=(2^{k-1},2^k]$. 

When $|w(t)|$ is convex or $|w(t)|t^{\alpha}$ is monotonic, \eqref{eq:remark:3.1:1} is satisfied.

In fact, if $|w(t)|$ is convex, then 
\begin{align*}
& |w(x)|= \left|w\big(\frac{y}{2} \lambda+2z (1-\lambda)\big)\right| \\
\le & \lambda \left|w\big(\frac{y}{2}\big)\right|+(1-\lambda)\left|w\big(2z\big)\right|
\le  \left|w\big(\frac{y}{2}\big)\right|+\left|w\big(2z\big)\right|,
\end{align*}
where $\lambda=\frac{4z-2x}{4z-y}\in [0,1]$. If $|w(t)|t^{\alpha}$ is non-decreasing, then
$|w(x)|/x^{\alpha}\le |w(2z)|/(2z)^{\alpha}$, and so
$$ |w(x)|\le \frac{x^{\alpha}}{(2z)^{\alpha}}|w(2z)|\le 4^{|\alpha|}|w(2z)|. $$
If $|w(t)|t^{\alpha}$ is non-increasing, then
$|w(x)|/x^{\alpha}\le |w(y/2)|/(y/2)^{\alpha}$, and so
$$ |w(x)|\le \frac{x^{\alpha}}{(y/2)^{\alpha}}|w(y/2)|\le 4^{|\alpha|}|w(y/2)|. $$
\end{remark}  
\begin{remark} When $q<\infty$, we need to assume that $\Delta=(a,b)$ is bounded because $\|\int_0^t Y(s)ds\|_{L^q(0,\infty)}$ is  not finite. 
\end{remark}  
\begin{remark}
Since $I(wX)(t)$ is continuous, $\|I(wX)\|_{L^{\infty}(\Delta)}=\|I(wX)\|_{\Delta}$. 
\end{remark} \red{
\begin{remark} When $q=\infty$, $\int_0^t|w(s)|s^{\tau}ds<\infty$ is implied by $\|w\|_{r,\tau+1,q}<\infty$. In fact, in this case $1/r=\tau+1>1$ and
\begin{align*}
 \left(\int_0^t|w(s)|s^{\tau}ds\right)^r
 \le & \left(\sum_{k=-\infty}^{\infty} \int_{2^{k-1}}^{2^k} |w(s)|s^{\tau}ds\right)^r
\le \left(\sum_{k=-\infty}^{\infty}2^{k(\tau+1)}\|w\|_{L^{\infty}(2^{k-1},2^k]}\right)^r\\
 \le &\sum_{k=-\infty}^{\infty}2^{k(\tau+1)r}\|w\|_{L^{\infty}(2^{k-1},2^k]}^r 
 =\left(\|w\|_{r,\tau+1,q}\right)^r.
 \end{align*}
\end{remark}
}

Theorem \ref{th:3.2} will follow from Theorem \ref{th:3.1} and the following proposition. 

\begin{proposition}\label{lem:3.7}  Let $  X(t)$ be a continuous, centered, self-similar Gaussian process of index $\tau>0$.
Suppose that $1< q\le\infty$ and
\begin{equation}\label{eq:lem:3.7:1}-\log \pr\Big(\sup_{0\le t\le 1} \left|  X(t)\right|<\epsilon\Big)\le c_0\epsilon^{-1/\tau}+o(\epsilon^{-1/\tau}).   
\end{equation} 
 Assume that  $\int_0^t|w(s)|s^{\tau}ds<\infty$, and $\Delta$ is a sub-interval of $(0,\infty)$. Suppose that $\Delta$ is bounded   when $q<\infty$.
 Then   
\begin{equation}\label{eq:lem:3.7:2}-\log \pr\left( \left\|I\big(wX\big)\right\|_{L^q(\Delta)}<\epsilon\right)\le C\|w\mathbbm{I}_{\Delta}\|_{r,\tau+1,q}^{1/(\tau+1)}\cdot \epsilon^{-1/(\tau+1)}+o\big(\epsilon^{-1/(\tau+1)}\big),
\end{equation}
where $\frac{1}{r}=\tau+1+\frac{1}{q}$ and the constant $C>0$ does not depend on $w$.   Moreover,  we assume that $w$ is almost everywhere continuous on $\Delta$ when $q=\infty$, and $\Delta$ is bounded when $1<q<\infty$. Then
\begin{equation}\label{eq:lem:3.7:3}\log \pr\left( \left\|I\big(wX\big)-wI(X)\right\|_{L^q(\Delta)}<\epsilon\right)=o\big(\epsilon^{-1/(\tau+1)}\big) \text{ as }  \epsilon\to 0,
\end{equation}
if $\|w\mathbbm{I}_{\Delta}\|_{r,\tau+1,q}<\infty$. 
\end{proposition}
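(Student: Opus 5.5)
Both parts will be reduced to dyadic blocks, as in Lemma \ref{lem:3.6}, and the blocks will be combined with the Gaussian correlation inequality \eqref{eq:GaussCor}. Throughout, $\Delta_k=(2^{k-1},2^k]$.

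\textbf{Step 1: the bound \eqref{eq:lem:3.7:2}.} Normalize $\|w\mathbbm{I}_{\Delta}\|_{r,\tau+1,q}=1$ and put $a_k=2^{k(\tau+1)}\|w\mathbbm{I}_\Delta\|_{L^q(\Delta_k)}$, so that $\sum_k a_k^r=1$. For $t\in\Delta_k\cap\Delta$ we have $I(w\mathbbm{I}_\Delta X)(t)=\int_0^{2^{k-1}}wX\,ds+\int_{2^{k-1}}^t wX\,ds$. The local piece is dominated on $\Delta_k$ by $\|w\|_{L^1(\Delta_k)}\|X\|_{\Delta_k}$. By H\"older and self-similarity, its size is of order $a_k 2^{-k/q}\|X\|_{[0,1]}$ in distribution, with an extra factor $2^{k/q}$ when we take the $L^q(\Delta_k)$ norm. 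This gives the same bookkeeping as in Lemma \ref{lem:3.6}, but with exponent $\beta=1/\tau$ replaced by $1/(\tau+1)$.

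The smoothing gains one power because the local piece is an integral. I plan to use Lemma \ref{lem:kernel}, applied to the self-similar process restricted to $[1/2,1]$ via Lemma \ref{lem:selfsimilar}, to get an $\epsilon^{-1/(\tau+1)}$ bound for each block. For $q=\infty$ the accumulated pieces $\int_0^{2^{k-1}}wX$ are handled through the telescoping sum $\sum_{j<k}\int_{\Delta_j}wX$. Each summand is a one-dimensional Gaussian whose standard deviation is at most $C\,2^{j\tau}\|w\|_{L^1(\Delta_j)}\le Ca_j2^{-j/q}$. Such finite-dimensional constraints cost only a logarithmic amount per block. I will require $|\int_{\Delta_j}wX|<\lambda_j\epsilon$ with weights $\lambda_j$ proportional to $a_j^{r/(\tau+1)\cdots}$, chosen so that they are summable, and then apply \eqref{eq:GaussCor} repeatedly, exactly as in Lemma \ref{lem:3.6}. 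For $q<\infty$ the boundedness of $\Delta$ leaves only finitely many relevant blocks beyond any given one. The tail near $0$ is absorbed using $\int_0^t|w|s^\tau ds<\infty$: this quantity controls the variance of $\int_0^{\delta}wX$, which is again a single Gaussian variable.

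\textbf{Step 2: the comparison \eqref{eq:lem:3.7:3}.} I will follow the proof of Theorem \ref{th:3.1}. By Step 1 and \eqref{eq:GaussCor}, it suffices to replace $w$ by a $w^{(2)}$ with $\|w-w^{(2)}\|_{r,\tau+1,q}<\delta$. The error then costs at most $C\delta^{1/(\tau+1)}\epsilon^{-1/(\tau+1)}$, and we let $\delta\to0$ at the end. When $q<\infty$, I take $w^{(2)}$ to be an interval step function, using density of step functions. When $q=\infty$, I take upper and lower step approximations, using almost-everywhere continuity and Riemann integrability.

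For a step function $w=\sum_j w_j\mathbbm{I}_{(s_j,s_{j+1}]}$, on $(s_j,s_{j+1}]$ we have
$$I(wX)(t)-w_jI(X)(t)=\sum_{i<j}w_i\int_{s_i}^{s_{i+1}}X\,ds-w_j\int_0^{s_j}X\,ds.$$
This is a finite linear combination of Gaussian random variables that does not depend on $t$. Hence its small ball probability is polynomial in $\epsilon$, and its logarithm is $O(\log(1/\epsilon))=o(\epsilon^{-1/(\tau+1)})$. The approximation error splits into $I((w-w^{(2)})X)$, controlled by Step 1, and $(w-w^{(2)})I(X)$. The second term is controlled by Lemma \ref{lem:3.6} applied to $I(X)$, which has self-similarity index $\tau+1$ and exponent $1/(\tau+1)$ by \eqref{eq:lem:kernel:1}. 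Combining the three pieces with Lemma \ref{lem:ChungLIL1} then gives \eqref{eq:lem:3.7:3}.

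\textbf{Main obstacle.} I expect the hardest part to be Step 1 for infinitely many blocks, specifically the sup-norm case with $\Delta$ unbounded. There the accumulated integral $\int_0^{2^{k-1}}wX$ must be controlled uniformly in $k$, and the weights $\lambda_j$ must be chosen so that the total logarithmic cost is still $C\epsilon^{-1/(\tau+1)}$ with $C$ independent of $w$. My first attempt is to bound the sum of these Gaussian variables by $\sum_j|\int_{\Delta_j}wX|$, allot $\lambda_j\propto a_j^{r/(\tau+1)}$, and use $r<1/(\tau+1)$ to get summability.
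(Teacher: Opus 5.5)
\textbf{Step 1 has two genuine gaps.} First, Lemma~\ref{lem:kernel} cannot deliver the per-block bound you need. For the kernel $w_k(s)\mathbbm{I}\{1/2\le s\le t\}$ with $\|w_k\|_{L^q}\le 1$, the H\"older condition in general holds only with exponent $\gamma=1-1/q$, coming from $\int_{t'}^{t''}|w_k|\le|t''-t'|^{1-1/q}$. The lemma then yields $\epsilon^{-1/(\tau+1-1/q)}$, which is strictly worse than $\epsilon^{-1/(\tau+1)}$ whenever $q<\infty$. Moreover, the lemma is an asymptotic statement for one fixed kernel. Summing over infinitely many blocks needs $-\log\pr(\|I(w_kX)\|_{L^q[0,1]}<\epsilon)\le C\epsilon^{-1/(\tau+1)}$ for all $\epsilon>0$, with $C$ independent of $k$. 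Unlike in Lemma~\ref{lem:3.6}, each block now carries a different operator, so this uniformity is the crux rather than bookkeeping. The paper gets both properties from entropy numbers. The operator $T_k=I\circ S_{w_k}:C[0,1]\to L^q[0,1]$ has $e_n(T_k)\le e_n(I:L^q\to L^q)\,\|w_k\|_{L^q}\le c_0n^{-1}$. Lemma~\ref{lem:3.8} converts this into a small-ball bound whose constants depend only on $c_0$ and the small-ball data of $X$; large $\epsilon$ is handled by \eqref{eq:proof:lem:3.6:1}.

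Second, your plan for the accumulated integrals when $q=\infty$ fails. Take $w$ constant on each $\Delta_j$. Then $\xi_j=\int_{\Delta_j}wX$ has $\sigma_j=a_j(\Var\int_{1/2}^1X(s)ds)^{1/2}$ by self-similarity. Fix any allotment with $\sum_j\lambda_j\le1$. Among $M$ blocks with $a_j\approx\eta$, at least $M/2$ have $\lambda_j\epsilon/\sigma_j\lesssim\epsilon/(M\eta)$, and each of these costs a fixed positive amount once $M\eta\gtrsim\epsilon$. Since $\sum_ja_j^r=1$ with $r=1/(\tau+1)$ allows $M\approx\eta^{-r}$, choosing $\eta=\epsilon^{1/(1-r)}$ makes your bound at least of order $\epsilon^{-r/(1-r)}=\epsilon^{-1/\tau}\gg\epsilon^{-1/(\tau+1)}$. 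A fixed $w$ with $a_j^r\asymp1/(|j|\log^2|j|)$ gives the same order up to logarithms, so not even the $w$-dependent $o$-term can absorb it.

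The same obstruction arises near $0$ when $\Delta=(0,b)$. For $q<\infty$ the paper controls the nested variables $\int_0^{2^{k-1}}wX$ only because the $L^q(\Delta_k)$-norm of a constant carries the factor $2^{k/q}\to0$, and this factor is lost for $q=\infty$. So for $q=\infty$ the paper does not use blocks at all. It factors $w=\psi_1\psi_2$ with $\psi_1=|w|^{r/r_0}\mathrm{sgn}(w)$, $\psi_2=|w|^{1/(2\tau+2)}$ and $1/r_0=\tau+1/2$. Then $\|\psi_1\|_{r_0,\tau,2}\le1$, so $-\log\pr(\|\psi_1X\|_{L^2}<\epsilon)\le C\epsilon^{-1/\tau}$ by Lemma~\ref{lem:3.6} with $q=2$. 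Also $\int\psi_2^2\le1$, so $e_n(I_{\psi_2}:L^2\to C)\le cn^{-1}$ by Lemma 6 of Duker, Li and Linde. Finally it applies Lemma~\ref{lem:3.8} to $I(wX)=I_{\psi_2}(\psi_1X)$.

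\textbf{Step 2 fails for $q=\infty$.} The sandwich $w^{(1)}\le|w|\le w^{(2)}$ was legitimate in Theorem~\ref{th:3.1} only because $\|wW_\lambda\|_{L^\infty}$ is monotone in $|w|$. Neither $I(wX)$ nor $Q_wX=I(wX)-wI(X)$ has such monotonicity. Your error estimate therefore really requires $\|w-w^{(2)}\|_{r,\tau+1,\infty}$ to be small, i.e.\ essentially uniform approximation by step functions. This is impossible for a.e.\ continuous but non-regulated $w$, e.g.\ $\sin(1/(x-x_0))$ near an interior point $x_0$. Splitting $Q_{w-\widetilde w}$ into $I((w-\widetilde w)X)$ and $(w-\widetilde w)I(X)$ also cannot work in sup-norm, since the second piece is governed by $\sup|w-\widetilde w|$. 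The missing idea is to keep $Q_w$ as one operator, whose entropy is controlled in $L^1$. By Theorem 4.6(2) of Lifshits and Linde (2002), $\limsup_n n\,e_n(Q_w:L^\infty(\Delta)\to L^\infty(\Delta))\le c(\|w\|_{L^1(\Delta)}+\|w^*\|_{L^1(\Delta)})$, and a.e.\ continuity gives $w^*=|w|$ a.e. Via Lemma~\ref{lem:3.8}, the cost of $Q_{w-\widetilde w}X$ is then $O(\|w-\widetilde w\|_{L^1}^{1/(\tau+1)}\epsilon^{-1/(\tau+1)})$, and step functions are dense in $L^1$. Your $q<\infty$ part of Step 2 (approximation in $\|\cdot\|_{r,\tau+1,q}$, plus a finite-dimensional Gaussian for step $w$) agrees with the paper once Step 1 is repaired.
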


To prove Proposition \ref{lem:3.7}, we need a some lemma.

\begin{lemma}\label{lem:3.8}  Let X be a centered  Gaussian random element in a separable Banach space $(E,\|\cdot\|_E)$ and suppose
that
$$ -\log \pr(\|X\|<\epsilon) \le c_0\epsilon^{-\beta}, 0<\epsilon\le \epsilon_0 $$
where $\beta>0$. For an operator $T$ from $E$  into another
Banach space $(F,\|\cdot\|_F)$, we denote the $n$th dyadic entropy number of $T$ as
$$ e_n(T)=\inf\Big\{\epsilon>0: T(U_E)\subset\bigcup_{j=1}^{2^{n-1}}\{x_j+\epsilon U_F\}, x_j\in U_F \Big\}, $$
where $U_E$ and $U_F$ are the unit balls in $E$ and $F$, respectively. If 
$$ e_n(T)\le C_1n^{-\gamma}, \;  n\ge n_0,
$$ 
for some $\gamma>0$, then there exist constants $C_2>0$ and $\epsilon_2>0$ which depend only on $c_0$, $\epsilon_0$, $\beta$, $\gamma$, $n_0$ and $C_1$, such that
$$  -\log \pr(\|T(X)\|_F<\epsilon) \le   C_2 \epsilon^{-\beta/(\beta\gamma+1)}, \;\; 0<\epsilon\le \epsilon_2. $$
  \end{lemma}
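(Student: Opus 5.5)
The plan is to follow the standard entropy-to-small-ball transfer (Li and Linde; Kuelbs–Li link), working in the reproducing kernel Hilbert space $H_X$ of $X$. Let $\mu$ be the law of $X$ on $E$, $K_\mu\subset E$ the unit ball of $H_X$, and $\phi(\epsilon)=-\log\pr(\|X\|<\epsilon)$. The law of $T(X)$ is Gaussian on $F$ with kernel ball $T(K_\mu)$. The proof would go through three steps.

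\textbf{Step 1: upper bound on the entropy of $K_\mu$.} Since $\phi(\epsilon)\le c_0\epsilon^{-\beta}$, the Kuelbs–Li inequality gives
$$\log N(\lambda K_\mu,2\epsilon)\le \tfrac{\lambda^2}{2}+\phi(\epsilon),$$
with $N$ the covering number in $E$. Choose $\lambda\asymp\epsilon^{-\beta/2}$ and normalize. This yields $\log N(K_\mu,\epsilon\,\epsilon^{\beta/2})\preccurlyeq\epsilon^{-\beta}$, i.e. $e_n(K_\mu\subset E)\preccurlyeq n^{-1/2-1/\beta}$, with constants depending only on $c_0,\epsilon_0,\beta$.

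\textbf{Step 2: compose with $T$.} By submultiplicativity of entropy numbers,
$$e_{n+k-1}(T(K_\mu))\le \|\cdot\|\text{-scale}\times e_n(K_\mu)\,e_k(T).$$
The composition requires covering $\epsilon$-balls of $E$ by images, which is where $e_k(T)$ enters, with $T$ treated as a bounded operator on $E$. Taking $n=k$ gives
$$e_{2n}(T(K_\mu))\preccurlyeq n^{-1/2-1/\beta-\gamma}=n^{-1/2-(1+\beta\gamma)/\beta}.$$

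\textbf{Step 3: convert back to a small ball estimate.} Apply the converse Kuelbs–Li/Li–Linde theorem: if $e_n(K)\preccurlyeq n^{-1/2-1/\beta'}$, then $-\log\pr(\|T(X)\|_F<\epsilon)\preccurlyeq\epsilon^{-\beta'}$. Here $\beta'=\beta/(\beta\gamma+1)$, which is exactly the claimed exponent. The constants produced along the way depend only on the listed parameters, giving $C_2$ and $\epsilon_2$.

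\textbf{Main obstacle.} The hard part is Step 3. The converse direction of the entropy/small-ball link usually needs an a priori regularity of $\phi$ (for instance $\phi(\epsilon)\approx\phi(2\epsilon)$), or it only yields the bound up to a log factor. I would first try Li–Linde's version, which states that $e_n\preccurlyeq n^{-1/2-1/\beta'}$ implies $\phi_{T(X)}(\epsilon)\preccurlyeq\epsilon^{-\beta'}$ with no extra assumption; this is Theorem 1.2 in Li–Linde (1999) and is stated exactly for power rates. If that is unavailable, I would run Talagrand's argument directly: use
$$\pr(\|Y\|<2\epsilon)\ge e^{-\lambda^2/2}\,\pr(Y\in\lambda K+\epsilon U)\,N^{-1}$$
together with Anderson's inequality and optimize over $\lambda$. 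I would also have to check that the constants remain uniform, which they do since every bound is explicit in $c_0,\epsilon_0,\beta,\gamma,n_0,C_1$.
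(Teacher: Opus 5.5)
Your proposal is correct and is essentially the paper's argument. The paper simply invokes Theorem 5.2 of Li and Linde, whose proof is exactly your chain: the Kuelbs--Li entropy bound for $K_\mu$, multiplicativity $e_{n+k-1}(T\circ u)\le e_k(T)\,e_n(u)$, and the converse direction of Li--Linde's Theorem 1.2 (which needs no regularity of $\phi$ for power rates), together with the same remark that all constants depend only on $c_0,\epsilon_0,\beta,\gamma,n_0,C_1$. That uniformity remark does hold, since the only extra quantities involved are $\|T\|\le 2^{n_0-1}e_{n_0}(T)\le 2^{n_0-1}C_1n_0^{-\gamma}$ and $\sup_{\|f\|\le 1}(\ep f(X)^2)^{1/2}\le\sqrt{2/\pi}\,\epsilon_0e^{c_0\epsilon_0^{-\beta}}$, the latter following from $\pr(\|X\|<\epsilon_0)\ge e^{-c_0\epsilon_0^{-\beta}}$.
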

This lemma follows from Theorem 5.2 of Li and Linde \cite{LiLinde1999}. It is sufficient to notice that, in 
Li and Linde's proof,  the choice of positive constants $C_2$ and $\epsilon_2$ does not depend  on $T$ and $X$ (See the proofs of Theorem 1 of  Kuelbes and Li \cite{KueblesLi1993}  and Theorem 1.2 of  Li and Linde \cite{LiLinde1999}, c.f. also Creutzig\cite{Creutzig1999}).

{\bf Proof of Proposition \ref{lem:3.7}.} Under the condition $\int_0^{t}|w(s)|s^{\tau}ds<\infty$, $I(wX)(t)$ is finite and a continuous, centered, Gaussian process.  
Let $\Delta=(a,b)$. Then 
$$\left\|I\big(wX-w\mathbbm{I}_{\Delta}X\big)\right\|_{L^q(\Delta)}=\Big|\int_0^aw(s)X(s)ds\Big|\cdot|\Delta|^{1/q} $$
and
$$ \pr\left(\Big|\int_0^aw(s)X(s)ds\Big|\cdot|\Delta|^{1/q}<\epsilon\right)\approx \epsilon, $$
where $|\Delta|^{1/q}=1$ when $q=\infty$. Thus, by Lemma \ref{lem:ChungLIL1}, without loss of generality   we can assume that $w$ has a support on $\Delta$ and $w=w\mathbbm{I}_{\Delta}$.

 (i)  For \eqref{eq:lem:3.7:2}, 
without loss of generality, we assume  $\|w\|_{r,\tau+1,q}=1$.   
 We first consider the case of  $q=\infty$ and use the arguments of   Duker,  Li and  Linde \cite{DLL2000}. Recall $\frac{1}{r}=\tau+1+\frac{1}{q}=\tau+1$. Let $\frac{1}{r_0}=\tau+\frac{1}{2}$, $\psi_1(t)=|w(t)|^{r/r_0}\text{sgn}(w(t))$ and $\psi_2=|w|^{1/(2\tau+2)}$. Then 
 $w=\psi_1\psi_2$,
 $$ \int_0^{\infty}\psi_2^2(s)ds =\int_0^{\infty}|w(s)|^rds \le \|w\|_{r,\tau+1,q}^r= 1 $$
 and
 $$ \|\psi_1\|_{r_0,\tau,2}^{r_0}\le \sum_k 2^{kr_0\tau}\big(\|\psi_1\|_{\Delta_k} |\Delta_k|^{1/2} \big)^{r_0} \le \sum_k 2^k \|w\|_{\Delta_k}^{r}=\|w\|_{r,\tau+1,q}^r=1. $$
 By Lemma  \ref{lem:3.6}, 
 $$ -\log\pr\left(\|\psi_1X\|_{L^2(0,\infty)}<\epsilon\right)\le  C_1\epsilon^{-1/\tau}, \; \epsilon>0, $$
 where the constant $C_1$ does not depend on $\psi_1$.
 Consider an operator  $I_{\psi_2}$ as
 $$I_{\psi_2}:L^2(0,\infty)\to C(0,\infty) \text{ with } I_{\psi_2}f(t)= \int_0^t\psi_2(s)f(s)ds.$$
 Notice that $\int_0^{\infty}\psi_2^2(s)ds\le 1$. We have  $e_n(I_{\psi_2})\le c_1n^{-1}$ by Lemma 6 and its proof of Duker,  Li and  Linde \cite{DLL2000} (c.f. Theorem 4.6 (1) of Lifshits and Linde \cite{LifshitsLinde2002}).
 Notice that $ I(wX)= I_{\psi_2} (\psi_1X)$. By Lemma \ref{lem:3.8}, there exist positive constants $C_3=C_3(C_1,c_1,\beta)$ and $\epsilon_0=\epsilon_0(C_1,c_1,\beta)$, such that
 $$ -\log\pr\left(\|I(wX)\|_{(0,\infty)}<\epsilon\right)\le C_3\epsilon^{-1/(\tau+1)},\;\; 0<\epsilon\le \epsilon_0. $$
 Therefore, \eqref{eq:lem:3.7:2} holds for $q=\infty$.

 Next, we suppose  that $1< q<\infty$, and 
 $\Delta\subset (0,2^{k_0}]$.  Write  $\Delta_k=(2^{k-1},2^k]$ and $a_k=\|w\|_{L^q(\Delta_k)}2^{k(\tau+1)}$.   Define  $X_k(t)=\frac{X(2^kt)}{2^{k\tau}}$,
 $ \xi_k=2^{k/q}\int_0^{2^{k-1}}w(s)X(s)ds$, and
$$ w_k(t)=\begin{cases} \frac{w(2^kt)2^{k/q}}{\|w\|_{L^q(\Delta_k)}}, & t\in(1/2,1]\\
0, & t\in [0,1/2]
\end{cases}  $$ 
 when $\|w\|_{L^q(\Delta_k)}\ne 0$,  and $ w_k(t)=0$   when $\|w\|_{L^q(\Delta_k)}= 0$. 
   Then $X_k\overset{d}=X$, $\|w_k\|_{L^q[0,1]}\le 1$,  and
\begin{align}\label{eq:proof:lem:3.7:1}
&\|I\big(wX\big)\|_{L^q(\Delta)}= \Big(\sum_{k\le k_0}   \|I\big(wX\big)\|_{L^q(\Delta_k)}^q\Big)^{1/q}\nonumber\\
\le & \Big(\sum_{k=-\infty}^{\infty}   \big\|\int_{2^{k-1}}^{t} w(s)X(s)ds\big\|_{L^q(\Delta_k)}^q\Big)^{1/q}+\Big(\sum_{k\le k_0}|\xi_k|^q\Big)^{1/q}\nonumber\\
= & \Big(\sum_{k=-\infty}^{\infty} a_k^q  \big\|I \big(w_kX_k\big) \big\|_{L^q[0,1]}^q\Big)^{1/q}+\Big(\sum_{k\le k_0}|\xi_k|^q\Big)^{1/q}:=\eta_1+\eta_2.
\end{align}
By repeating the Gaussian correlation inequality \eqref{eq:GaussCor}, we have that for $\lambda_k=a_k^r$,
\begin{align}\label{eq:proof:lem:3.7:2}
 -\log \pr & \Big(\eta_1<\epsilon\Big)\le  -\log \pr\Big(\bigcap_{k=-\infty}^{\infty} \Big\{ a_k  \big\|I\big(w_kX_k\big) \big\|_{L^q[0,1]}<\lambda_k^{1/q}\epsilon\Big\}\Big)\nonumber\\
\le & -\sum_{k=-\infty}^{\infty}\log \pr\Big(   \big\|I\big(w_kX_k\big) \big\|_{L^q[0,1]}<\lambda_k^{1/q}a_k^{-1}\epsilon\Big)\nonumber\\
=& -\sum_{k=-\infty}^{\infty}\log \pr\Big(   \big\|I\big(w_kX\big) \big\|_{L^q[0,1]}<a_k^{-r(\tau+1)}\epsilon\Big)\nonumber\\
\le &C\epsilon^{-1/(\tau+1)}\sum_{k=-\infty}^{\infty}a_k^r=C\epsilon^{-1/(\tau+1)},\;\; \epsilon>0. 
\end{align}
where the last inequality is due to the following fact:
\begin{equation}\label{eq:proof:lem:3.7:4} -\log \pr\Big(   \big\|I\big(w_kX\big) \big\|_{L^q[0,1]}< \epsilon\Big)\le C\epsilon^{-1/(\tau+1)}, 
\end{equation}
for all $\epsilon>0$ and all $k$.  

For verifying \eqref{eq:proof:lem:3.7:4}, we let 
$$ T_kf(t)=I(w_kf)(t)=\int_0^t w_k(s) f(s) ds. $$
We write $T_k:C[0,1]\to L^q[0,1]$ as $T=  I \circ    S_{w_k}$, where $S_{w_k}:C[0,1]\to L^q[0,1]$ and  $I:L^q[0,1]\to L^q[0,1]$  are defined as
$$ S_{w_k}f(t)=w_k(t)f(t),  \;\; If(t)=\int_0^t f(s)ds. $$
It is known that $e_n(I)\le c_0 n^{-1}$ (c.f. Theorem 2.1 of Lifshits and Linde \cite{LifshitsLinde2002}). It is obvious that 
$$\big\|S_{w_k}\big\|=\sup_{\|f\|_{[0,1]}\le 1}\|w_kf\|_{L^q[0,1]}=\|w_k\|_{L^q[0,1]}\le 1. $$
It follows that
$$ e_n(T_k)\le    e_n(I)\cdot   \big\|S_{w_k}\big\|\le c_0 n^{-1}. $$
By Lemma \ref{lem:3.8}, there exist constants $C$ and $\epsilon_0>0$ such that \eqref{eq:proof:lem:3.7:4} holds for all $0<\epsilon\le \epsilon_0$ and all $k$. 
On the other hand, by \eqref{eq:proof:lem:3.6:1},
\begin{align*}
-\log \pr\Big(   \big\|I\big(w_kX\big) \big\|_{L^q[0,1]}< \epsilon\Big)
\le  -\log \pr\Big(  \|X\|_{[0,1]}< \epsilon\Big)
\le C_{1/(\tau+1),\epsilon_0}\epsilon^{-1/(\tau+1)}, \;\; \epsilon>\epsilon_0.
\end{align*}
Thus, \eqref{eq:proof:lem:3.7:4} holds for all $\epsilon>0$ and all $k$.

 Next, we consider the second term of \eqref{eq:proof:lem:3.7:1}. Choose $\beta_0<1/(\tau+1)$.  For the centered normal random variable $\xi_k$, 
 $$ \sigma_k=(\Var(\xi_k))^{1/2}=\sqrt{\frac{\pi}{2}}\ep|\xi_k|\le C 2^{k/q}\int_0^{2^{k_0}}|w(s)|s^{\tau}ds=C_12^{k/q}. $$
 Let $\{\lambda_k\}$ be a sequence of positive numbers (for example $\lambda_k= 2^{k/2}/\sum_{j\le k_0}2^{j/2}$) such that $\sum_{k\le k_0}\lambda_k=1$ and 
 $$
 \sum_{k\le k_0}\big(\lambda_k^{-1/q} C_12^{k/q}\big)^{\beta_0}=:C_2<\infty.$$ 
 Then, by   the Gaussian correlation inequality \eqref{eq:GaussCor}, we have that
 \begin{align}\label{eq:proof:lem:3.7:5}
  -\log\pr\big(\eta_2<\epsilon\big)\le  & -\log\pr\Big(\bigcap_{k=-\infty}^{k_0}\{|\xi_k|<\epsilon \lambda_k^{1/q}\}\Big)
\le -\sum_{k\le k_0}\log\pr\Big( |\xi_k|<\epsilon \lambda_k^{1/q}\Big)\nonumber\\
=& -\sum_{k\le k_0}\log\pr\Big(\sigma_k |N(0,1)|<\epsilon \lambda_k^{1/q}\Big)
\le \sum_{k\le k_0}C_{\beta_0}\big(\epsilon^{-1} \lambda_k^{-1/q}\sigma_k\big)^{\beta_0}\nonumber\\
\le &C_{\beta_0}C_2 \epsilon^{-\beta_0} \;\text{ for all } \epsilon>0. 
 \end{align}
 By Lemma \ref{lem:ChungLIL1} and combining \eqref{eq:proof:lem:3.7:2} and \eqref{eq:proof:lem:3.7:5}, we have that
 \begin{align*}
 -\log\pr\left( \|I\big(wX\big)\|_{L^q(\Delta)}<\epsilon\right)\le -\log\pr\left(\eta_1<\epsilon/2,\eta_2<\epsilon/2\right)
\le C\epsilon^{-1/(\tau+1)} +o\big(\epsilon^{-1/(\tau+1)}\big).
 \end{align*}
 Thus \eqref{eq:lem:3.7:2} holds when $1< q<\infty$.

 (ii) For \eqref{eq:lem:3.7:3}, we define $Q_w$ by $Q_wf=I(wf)-wI(f)$.  
 Notice that $I(X)$ is a continuous, centered, self-similar Gaussian process of index $\tau+1$, and 
 $$ -\log \pr\left(\|I(X)\|_{[0,1]}<\epsilon\right)\preccurlyeq \epsilon^{-\tau/(\tau+1)}, $$
 by \eqref{eq:selfsimilar.5} of Lemma \ref{lem:selfsimilar}. Applying Lemma \ref{lem:3.6}, we have that
 $$
 -\log \pr\left(\|wI(X)\|_{L^q(0,\infty)}<\epsilon\right)\le C\|w\|_{r,\tau+1,q}^{1/(\tau+1)}\cdot \epsilon^{-1/(\tau+1)},\;\;  \epsilon> 0,
$$
which, together with \eqref{eq:lem:3.7:2}, implies that
\begin{align}\label{eq:proof:lem:3.7:6}
 -\log \pr\left( \left\|Q_wX\right\|_{L^q(\Delta)}<\epsilon\right) 
\le & -\log \pr\left( \left\|I\big(wX\big)\right\|_{L^q(\Delta)}<\epsilon/2, \left\|wI(X)\right\|_{L^q(\Delta)}<\epsilon/2 \right)\nonumber\\
\le & C\|w\|_{r,\tau+1,q}^{1/(\tau+1)}\cdot \epsilon^{-1/(\tau+1)}+o\big(\epsilon^{-1/(\tau+1)}\big),
\end{align}
by Lemma \ref{lem:WeakGaussCor} or \eqref{eq:GaussCor}.  When $q=\infty$, $\Delta$ can be chosen to be the whole interval $(0,\infty)$.  

For a function $w$ with $\|w\|_{r,\tau+1,q}<\infty$ and any given $\delta>0$, we can find a bounded and closed interval $\Delta\subset (0,\infty)$ such that $\|w-w\mathbbm{I}_{\Delta}\|_{r,\tau+1,q}<\delta/2$.  

 When $q<\infty$,  since $w\mathbbm{I}_{\Delta}\in L^q(I)$, we can find 
an interval step function $\widetilde{w}$  such that  $\|w\mathbbm{I}_{\Delta}-\widetilde{w}\|_{r,\tau+1,q}< \delta/2$.
It follows that  $\|w-\widetilde{w}\|_{r,\tau+1,q}<\delta$ and
\begin{align}\label{eq:proof:lem:3.7:7}  -\log \pr\left( \left\|Q_{w-\widetilde{w}}X\right\|_{L^q(0,\infty)}<\epsilon\right)\le C\delta^{1/(\tau+1)}\epsilon^{-1/(\tau+1)}
+o\big(\epsilon^{-1/(\tau+1)}\big), 
\end{align}
by \eqref{eq:proof:lem:3.7:6}. Thus,  for \eqref{eq:lem:3.7:3} it is sufficient to show that it holds for an interval step function $w$ of the form 
$ w=\sum_{j=1}^mw_j\mathbbm{I}_{(s_j,s_{j+1}]}. $
Then
$$\int_0^tw(s)X(s)ds-w(t)\int_0^tX(s)ds=\int_0^{s_j}\big(w(s)-w_j\big)X(s)ds=:\xi_j, \; t\in (s_j,s_{j+1}]. $$
Since $(\xi_1,\cdots,\xi_m)$ is a Gaussian vector, 
$$  \text{the left hand of \eqref{eq:lem:3.7:3}}\le -\log\pr\left(\max_{1\le j\le m}|\xi_j|<\epsilon\right)=o(\epsilon^{-\beta}), \; \text{ for all } \beta>0. 
$$

When $q=\infty$, it is also sufficient to show that \eqref{eq:lem:3.7:3} holds with  $w\mathbbm{I}_{\widetilde{\Delta}}$ taking the place of $w$, where $\widetilde{\Delta}$ is a bounded and closed sub-interval of $\Delta=[c,d]\subset (0,\infty)$. On $\widetilde{\Delta}$, $w$ is bounded and almost everywhere continuous. Also, $\|Q_wX\|_{L^{\infty}(0,\infty)}=
\|Q_wX\|_{L^{\infty}(\Delta)}\vee |\int_0^d w(s)X(s)ds|$.  Thus, without loss of generality, we can assume that $\Delta\subset [0,1]$ and  $w$ is bounded and almost everywhere continuous on $\Delta$. Then, $w\in L^p(\Delta)$ for all $p>0$.  Denote $T_{\rho,\psi}$ by
$T_{\rho,\psi}f=\rho I(\psi f)$. Then $Q_w=T_{1,w}-T_{w,1}$. By Theorem 4.6 (2) of Lifshits and Linde \cite{LifshitsLinde2002}, 
$$ \limsup_{n\to\infty} n \cdot e_n\big(Q_w:L^{\infty}(\Delta)\to L^{\infty}(\Delta)\big)\le c\cdot (\|w\|_{L^1(\Delta)}+\|w^{\ast}\|_{L^1(\Delta)})= 2c\cdot  \|w\|_{L^1(\Delta)}, $$
since $|w|=w^{\ast}$ almost everywhere by noting the almost everywhere continuity of $w$, where $w^{\ast}$ is defined as \eqref{eq:th:3.1:2}, $c$ is a constant which does not depend on $w$. \red{Write $\overline{w}=w/\|w\|_{L^1(\Delta)}$. Then 
$$ \limsup_{n\to\infty} n \cdot e_n\big(Q_{\overline{w}}:L^{\infty}(\Delta)\to L^{\infty}(\Delta)\big)\le  c. $$ 
By Lemma \ref{lem:3.8}, there exists a universal constant $C_{\Delta}$  
(which may depend on $X$, $\Delta$)  such that
\begin{align*}
 -&\log \pr\left(\|Q_wX\|_{L^{\infty}(\Delta)}<\epsilon\right)
=-\log\pr\left(\|Q_{\overline{w}}X\|_{L^{\infty}(\Delta)}<\epsilon/\|w\|_{L^1(\Delta)}\right) \\
& \;\; \le   C_{\Delta} \cdot\left(\epsilon/\|w\|_{L^1(\Delta)}\right)^{-1/(\tau+1)}
+o\left(\epsilon/\|w\|_{L^1(\Delta)}\right)^{-1/(\tau+1)}\\
& \;\; =   C_{\Delta} \cdot\|w\|_{L^1(\Delta)}^{1/(\tau+1)}\cdot\epsilon^{-1/(\tau+1)}+o\big(\epsilon^{-1/(\tau+1)}\big). \end{align*}
}For this function $w$  and any given $\delta>0$, since $w\in L^1(\Delta)$, there exists an interval step function $\widetilde{w}$ on $\Delta$ such that 
$\|w-\widetilde{w}\|_{L^1(\Delta)}<\delta$ and $\|w-\widetilde{w}\|_{L^{\infty}(\Delta)}\le 2\|w\|_{L^{\infty}(\Delta)}$.  Thus, \eqref{eq:proof:lem:3.7:7} remains true.
The proof is now completed. $\Box$

{\bf Proof of Theorem \ref{th:3.2}}. Recall $\tau=H+m+\gamma$. Let
$$X(t)= a_H^{-1} t^{\alpha_1+\cdots+\alpha_m}   J_{m,\bm\alpha}(I_{\gamma}B_H)(t) \;\; \text{ or }t^{\alpha_1+\cdots+\alpha_m}   J_{m,\bm\alpha}(W_{H+\gamma})(t). $$
Then $X(t)$ is a continuous, centered, self-similar Gaussian process of index $\tau$, and 
$$-\log\pr\left(\|X\|_{[0,1]}<\epsilon\right)\preccurlyeq \epsilon^{-1/\tau}, $$
by \eqref{eq:th:1:1:1}. 

We first assume that $w$ is almost everywhere continuous when $q=\infty$. Then, by \eqref{eq:lem:3.7:3} of Proposition \ref{lem:3.7},   
$$ -\log\pr\left(\|I(wX)-wI(X)\|_{L^q(\Delta)}<\epsilon\right)=o(\epsilon^{-1/(\tau+1)}). $$
On the other hand,
by \eqref{eq:proof:th:3.1:0}, 
$$-\log \pr\left(\|X-W_{\tau}\|_{[0,1]}< \epsilon\right)\preccurlyeq \epsilon^{-1/(\tau+1)}.$$  
Thus, $I(X)-W_{\tau+1}=I(X-W_{\tau})$ is a continuous, centered, self-similar Gaussian process of index $\tau+1$, and
$$-\log \pr\left(\|I(X)-W_{\tau+1}\|_{[0,1]}< \epsilon\right)\preccurlyeq \epsilon^{-1/(\tau+2)}=o(\epsilon^{-1/(\tau+1)}),$$ 
 by Lemma \ref{lem:selfsimilar} (ii). Applying Lemma \ref{lem:3.6}, we have that
 $$-\log \pr\left(\|wI(X)-wW_{\tau+1}\|_{L^q(0,\infty)}< \epsilon\right)=o(\epsilon^{-1/(\tau+1)}).$$   
 It follows that
$$ -\log \pr\left(\|I(wX)-wW_{\tau+1}\|_{L^q(\Delta)}< \epsilon\right)=o(\epsilon^{-1/(\tau+1)}).
$$
 Then, \eqref{eq:th:3.2:1} follows from \eqref{eq:proof:th:3.1:1} with $\lambda=\tau+1$.  
 
 \red{Next, we remove the condition that $w$ is almost everywhere continuous when $q=\infty$. 
 Since there is a closed and bounded interval $\widetilde{\Delta}\subset \Delta$ such that 
 $\|w\mathbbm{I}_{\Delta}-\widetilde{w}\|_{r,\tau+1,q}<\delta$, and
 \begin{align}  -\log \pr\left( \left\|I\big((w-\widetilde{w})X\big)\right\|_{L^q(\Delta)}<\epsilon\right)\le C\delta^{1/(\tau+1)}\epsilon^{-1/(\tau+1)}
+o\big(\epsilon^{-1/(\tau+1)}\big), 
\end{align}
 by \eqref{eq:lem:3.7:2}. Thus again, without loss of generality we can assume that $w$ has a support on a closed and bounded  subinterval $\widetilde{\Delta}$ of $\Delta$. 
 By Theorem 4.6 (2) of Lifshits and Linde \cite{LifshitsLinde2002}, 
$$ \limsup_{n\to\infty} n \cdot e_n\big(T_{1,w}:L^{\infty}(\widetilde{\Delta})\to L^{\infty}(\widetilde{\Delta})\big)\le c\cdot  \|w\|_{L^1(\widetilde{\Delta})}, $$
which, together with  Lemma \ref{lem:3.8}, implies that
\begin{align*}
 -&\log \pr\left(\|I(wX)\|_{L^{\infty}(\Delta)}<\epsilon\right)
= \log \pr\left(\|I(wX)\|_{L^{\infty}(\widetilde{\Delta})}<\epsilon\right)\\
& \;\;\le   C_{\widetilde{\Delta}} \cdot\|w\|_{L^1(\widetilde{\Delta})}^{1/(\tau+1)}\cdot\epsilon^{-1/(\tau+1)}+o\big(\epsilon^{-1/(\tau+1)}\big). \end{align*}
where $C_{\widetilde{\Delta}}$ does not depend on $w$. For this $w$, there exists an interval step function $\widetilde{w}$ such that $\|w-\widetilde{w}\|_{L^1(\widetilde{\Delta})}<\delta$.
Thus,
 \begin{align*}
 -&\log \pr\left(\|I\big((w-\widetilde{w})X\big)\|_{L^{\infty}(\Delta)}<\epsilon\right)\\
 \le &  C_{\widetilde{\Delta}} \delta^{1/(\tau+1)}\cdot\epsilon^{-1/(\tau+1)}+o\big(\epsilon^{-1/(\tau+1)}\big). \end{align*}
 For the interval step function $\widetilde{w}$, \eqref{eq:th:3.2:1} holds since $\widetilde{w}$ is almost everywhere continuous. Thus, \eqref{eq:th:3.2:1} holds for all $w$ under the condition \eqref{eq:th:3.2:cond}. 
 }
 Finally, notice $\|I(wX)\|_{L^{\infty}(\Delta)}=\|I(wX)\|_{\Delta}$ since $I(wX)$ is continuous. The proof is completed. $\Box$
 
\section{An application to randomized play-the-winner rule}
Consider an urn with two types of balls (white and black) which starts at $W_0> 0$ white balls and $B_0>0$ black balls. At each stage, we a draw ball from the urn with replacement.   If a white ball is drawn, then an additional white ball or black ball is added to the urn with a probability $p_W$ and $q_W=1-p_W$, respectively. 
If a black ball is drawn, then an additional black ball or white ball is added to the urn with a probability $p_B$ and $q_B=1-p_B$, respectively. This urn model is the randomized-play-the-winner (RPW) rule  introduced by Wei and Durham \cite{WD78}  for   sequentially randomizing patients to treatments in a clinical trial.  After $n$ generations,   the number of white balls in the urn is denoted by $Y_n$, and, the number of white balls drawn is denoted by $N_n$. 
 Let $\rho=p_W+p_B-1$, $v=q_B/(q_W+q_B)$, $\sigma_1^2
=q_Wq_B/(q_W+q_B)^2$, $\sigma_2^2=q_Wq_B(p_W+p_B)/(q_W+q_B)$. Then $\rho^2\sigma_1^2+\sigma_2^2=\sigma_1^2 $.  Suppose $0<p_W,p_B<1$  and $\rho<1/2$. Bai, Hu and Zhang \cite{BHZ2002} and  Zhang and Hu \cite{ZH2009} showed the  Gaussian approximation of $Y_n$ and $N_n$ as that
$$ Y_n-n v= G_1(n)+o(n^{1/2-\gamma}) \;a.s. \text{ and}, $$  
$$ N_n-n v= G_2(n) +o(n^{1/2-\gamma}) \;a.s., $$
where $\gamma>0$, 
\begin{align*}
G_1(t)=& \rho\sigma_1B_1(t)+\sigma_2B_2(t)+  \rho t^{\rho}\int_0^t \frac{\rho\sigma_1B_1(s)+\sigma_2B_2(s)}{s^{\rho+1}}ds,\\
 G_2(t)=& \sigma_1B_1(t) +t^{\rho}\int_0^t \frac{\rho\sigma_1B_1(s)+\sigma_2B_2(s)}{s^{\rho+1}}ds,
 \end{align*}
$B_1(t)$ and $B_2(t)$ are two independent standard Brownian motions (c.f. Theorem 4.4 of Zhang and Hu \cite{ZH2009}). By the Gaussian approximation, Bai, Hu and Zhang \cite{BHZ2002} and Zhang \cite{Zhang2012} obtained the following law of the iterated logarithm:
\begin{align*}
& \limsup_{n\to \infty} \frac{|Y_n-nv|}{\sqrt{2n\log\log n}}=\frac{\sigma_1}{\sqrt{1-2\rho}} \; a.s. \text{ and } \\
& \limsup_{n\to \infty} \frac{|N_n-nv|}{\sqrt{2n\log\log n}}=\sqrt{\frac{\sigma_1^2(1+2(p_W+p_B))}{1-2\rho}} \; a.s.
\end{align*}
(c.f. Theorem 4.3 of Zhang \cite{Zhang2012}). 

Now, notice that $\{\rho\sigma_1B_1(t)+\sigma_2B_2(t), t\ge 0\}\overset{d}= \{\sigma_1B(t), t\ge 0\}$.
  By  \eqref{eq:cor:1:2}, 
$$\lim_{\epsilon\to 0}\epsilon^{2/3} \log \pr\left(\sup_{0\le t\le 1}t^{\rho}\left|\int_0^t \frac{\rho\sigma_1B_1(s)+\sigma_2B_2(s)}{s^{\rho+1}}ds\right|<\epsilon\right) 
= - 3\kappa_{\frac{3}{2}}\sigma_1^2. $$
 By Lemma \ref{lem:ChungLIL1}, it follows that
\begin{align*}
&\lim_{\epsilon\to 0}\epsilon^2\log \pr\left(\sup_{0\le t\le 1}\left|G_i(t)\right|<\epsilon\right)=\lim_{\epsilon\to 0}\epsilon^2 \log \pr\left(\sigma_1\sup_{0\le t\le 1}\left|B(t)\right|<\epsilon\right)=-\frac{\sigma_1^2\pi^2}{8},\; i=1,2,
\end{align*}
which implies that 
$$\liminf_{T\to \infty} \sqrt{ \frac{\log \log T}{T}}\sup_{0\le t\le T}\big|G_i(t)\big|= \frac{\sigma_1 \pi}{\sqrt{8}}\; a.s., \;\; i=1,2. $$
Hence, we have the following Chung-tye law of the iterated logarithm:
\begin{align*}
   \liminf_{n\to \infty}\sqrt{ \frac{\log \log n}{n}}\sup_{1\le m\le n}\big|Y_m-mv\big|= \liminf_{n\to \infty} \sqrt{ \frac{\log \log n}{n}}\sup_{1\le m\le n}\big|N_m-mv\big|= \frac{\sigma_1 \pi}{\sqrt{8}}\; a.s.
\end{align*}


\end{document}